\newtheorem{remark}{Remark}
\newtheorem{thm}{Theorem}[section]
\newtheorem{cor}[thm]{Corollary}
\newcommand\mz[1]{{\color{blue}{#1}}}
\newcommand\mzdet[1]{{\color{purple}{#1}}}	
\newcommand\mrg[1]{{\marginnote{\footnotesize{\color{blue}{#1}}}}}
\newcommand\mgg[1]{{\marginnote{\footnotesize{\color{red}{#1}}}}}
\newif\ifdetail
\newcommand{\LL}{l}	
\newcommand{\non}{N_\LL}	
\newcommand{\Cr}{C_{r}}	
\newcommand{\CR}{C_{R}}	
\newcommand{\Clc}{C_{\text{lc}}}	
\newcommand{\Csm}{C_{\text{sm}}}	
\newcommand{\CLC}{C_{\text{LC}}}	
\newcommand{\CSM}{C_{\text{SM}}}	
\newcommand{\Cen}{C_{\text{en}}}	
\newcommand{\CA}{C_{A}}			
\newcommand{\CF}{C_{\text{F}}}		
\newcommand{\Cinv}{C_{\text{inv}}}	
\newcommand{\Ccon}{C_{\text{con}}}	
\newcommand{\Cacon}{\tilde{C}_{\text{con}}}	
\newcommand{\Cerr}{C_{\text{err}}}				
\newcommand{\Cerrt}{\tilde{C}_{\text{err}}}				
\newcommand{\Cco}{C_{\text{co}}}				
\newcommand{\pen}{\mu_\face}					
\newcommand{\CT}{C_{\mathrm{T}}}				
\newcommand{\smth}{s}								
\newcommand{\prmcnst}{\mathcal{E}_{p}}				
\newcommand{\dualcnst}{\mathcal{E}_{d}}				
\newcommand{\er}{e}			
\newcommand{\prer}{\eta}		
\newcommand{\dser}{\xi}		
\newcommand{\tr}{^t}		
\newcommand{\inprods}[2]{{#1} \cdot {#2}}
\newcommand{\dom}{\Omega}
\newcommand{\telem}{\kappa}			
\newcommand{\face}{e}
\newcommand{\intk}{\int_\telem}	
\newcommand{\intdk}{\int_{\partial\telem}}	
\newcommand{\no}{\nu}
\newcommand{\h}{h}
\newcommand{\hk}{\h_\telem}
\newcommand{\he}{\h_\face}
\newcommand{\triag}{\mathcal T_\h}
\newcommand{\inte}{\int_\Gamma}
\newcommand{\intdom}{\int_\dom}
\newcommand{\inted}{\int_\face}
\newcommand{\df}[1]{\,\mathrm{d}#1}
\newcommand{\dx}{\df{x}}
\newcommand{\ds}{\df{s}}
\newcommand{\sumk}{\sum_{\telem \in \triag} }
\newcommand{\sume}{\sum_{\face \in \sklt} }		
\newcommand{\sumei}{\sum_{\face \in \sklti} }		
\newcommand{\sklt}{\mathcal{E}_\h }		
\newcommand{\sklti}{\mathcal{E}_{\h, I} }		
\newcommand{\skltb}{\mathcal{E}_{\h, \partial} }		
\newcommand{\liftnorm}[1]{\seminorm{{#1}}_{{*}, \h} }	
\newcommand{\normgen}[1]{|\!|#1|\!|}	
\newcommand{\seminorm}[1]{|#1|}		
\newcommand{\ennorm}[1]{ |\!\normgen{#1}\!|_{\h} }		
\newcommand{\projh}{\pi_\h}			
\newcommand{\galproj}{\mathcal{G}_h}		
\newcommand{\poldeg}{q}
\newcommand{\altpoldeg}{p}
\newcommand{\polsp}{\mathcal P}
\newcommand{\polp}{\polsp^\poldeg}
\newcommand{\polaltp}{\polsp^\altpoldeg}
\newcommand{\Vh}{V(\h)}					
\newcommand{\Vhd}{V'(\h)}				
\newcommand{\Vsp}{V_{\h,\poldeg}}
\newcommand{\Sigsp}{\Sigma_{\h,\altpoldeg}}
\newcommand{\Lp}[1]{{L_{#1}}}
\newcommand{\Ltwo}{\Lp{2}}
\newcommand{\Linf}{\Lp{\infty}}
\newcommand{\Wmp}[2]{W_{#1}^{#2}} 
\newcommand{\Hm}[1]{H^#1} 
\newcommand{\ltwonrm}[2]{\Vert #1 \Vert_{\Ltwo(#2)}}
\newcommand{\uu}{u}					
\newcommand{\uh}{\uu_\h}			
\newcommand{\tet}{\bm{\theta}}	
\newcommand{\teth}{\tet_h}			
\newcommand{\sig}{\bm{\sigma}}	
\newcommand{\sigh}{\sig_h}			
\newcommand{\vv}{v}			
\newcommand{\vh}{\vv_h}			
\newcommand{\ww}{w}
\newcommand{\wh}{\ww_h}
\newcommand{\xih}{\xi_h}
\newcommand{\zz}{z}
\newcommand{\zeth}{\bm{\zeta}_h}			
\newcommand{\tauh}{\bm{\tau}_h}			
\newcommand{\taub}{\bm{\tau}}			
\newcommand{\liftR}{r}
\newcommand{\liftRe}{\liftR^\face}
\newcommand{\liftL}{l}
\newcommand{\liftLe}{\liftL^\face}
\newcommand{\diffSym}{a}
\newcommand{\diff}{\bm{\diffSym}}
\newcommand{\sdiff}{\diffSym}
\newcommand{\diffz}{\diff_{\bm{z}}}
\newcommand{\diffu}{\diff_{u}}
\newcommand{\difftu}{\tilde{\diff}_u}
\newcommand{\difft}{\tilde{\diff}_{\bm{z}}}
\newcommand{\divg}{\nabla \cdot}
\newcommand{\divh}{\nabla_h \cdot}
\newcommand{\gradh}{\nabla_h }
\newcommand{\uflux}{\hat{u}}
\newcommand{\sflux}{\hat{\sig}}
\newcommand{\jump}[1]{\llbracket #1 \rrbracket}
\newcommand{\avg}[1]{\{ \! \! \{ #1 \}\! \! \} }
\newcommand{\rhsfrm}{\mathcal{F}}			
\newcommand{\tarfrm}{\mathcal{J}}
\newcommand{\difffrm}{\mathcal{B}}
\newcommand{\sipg}{{\mathrm{SIPG}}}			
\newcommand{\brtwo}{{\mathrm{BR2}}}			
\newcommand{\brone}{{\mathrm{BR1}}}			
\newcommand{\Tone}{T^{(1)}}
\newcommand{\Ttwo}{T^{(2)}}
\newcommand{\bldg}{\bm{\beta}}
\newcommand{\Real}{\mathbb{R}}
\title{Analysis of mixed discontinuous Galerkin formulations for quasilinear elliptic problems}
\author{Mohammad Zakerzadeh\footnotemark[2] \and Georg May\footnotemark[2]  \thanks{The research of the authors was supported by the
Deutsche Forschungsgemeinschaft (German Research Association) through grant GSC 111.} }
\begin{document}

\ifdetail
\mrg{
\mzdet{Here we just consider the quasilinear case, based on e.g. "Linear and quasilinear elliptic equations" by Ladyzhenskaya}
}
\fi 
\maketitle

\renewcommand{\thefootnote}{\fnsymbol{footnote}}
\footnotetext[1]{Aachen Institute for Advanced Study in Computational Engineering Science, RWTH Aachen, 52062 Aachen, Germany 
        (\texttt{\{zakerzadeh, may\}@aices.rwth-aachen.de}).}
\renewcommand{\thefootnote}{\arabic{footnote}}

\begin{abstract}
In this manuscript we present an approach to analyze the discontinuous Galerkin solution for general quasilinear elliptic  problems. This approach is sufficiently general to extend most of the well-known discretization schemes, including {BR1, BR2, SIPG and LDG},  to nonlinear cases in a canonical way, and to establish the stability of their solution. Furthermore, in case of monotone and globally Lipschitz problems, we prove the existence and uniqueness of the approximated solution and the $\h$-optimality of the error estimate in the energy norm as well as in the $\Ltwo$ norm.
\end{abstract}

\begin{keywords} 
discontinuous Galerkin methods, nonlinear elliptic problems, optimal estimates
\end{keywords}

\begin{AMS}
65N12, 65N15, 65N30
\end{AMS}

\pagestyle{myheadings}
\thispagestyle{plain}
\markboth{\uppercase{M. Zakerzadeh AND G. May}}{\uppercase{Analysis of  DG for quasilinear elliptic problems}}

\section{Introduction}
\label{Sec::Int} 

A great amount of research has been devoted to the analysis of discontinuous Galerkin (DG) schemes for (non)linear elliptic and parabolic problems. This goes back to work of Babuska \cite{babuvska1973nonconforming}, Wheeler \cite{wheeler1978elliptic}, Arnold \cite{arnold1982interior} and Dupont et al.\ \cite{10.2307/2005280} for interior penalty (IP) methods. In later  work, e.g. \cite{baumann1999discontinuous, riviere2001priori}, a non-symmetric IP scheme was presented and analyzed. Other classes of discretization techniques have also been introduced, e.g., the first and second method of Bassi--Rebay \cite{bassi1997high1, bassi1997high}, 
or Shu and Cockburn's LDG method \cite{cockburn1998local}. For a comprehensive literature study of these problems we refer to \cite{cockburn2000development} and \cite{arnold2002unified}.

{In their seminal paper \cite{arnold2002unified}, Arnold et al.  provided a unified framework for obtaining different classes of DG methods for the linear Poisson problem. For nonlinear problems the picture is more complicated, and obtaining different classes of methods usually follows very different approaches; while IP methods often have been introduced and analyzed in the primal form  \cite{dolejvsi2008analysis, gudi2008hp, gudi2007discontinuous, ortner2007discontinuous, houston2005discontinuous}, for LDG methods the usual approach is to start from a mixed formulation, e.g., \cite{bustinza2004local, gudi2008hpmixed, bustinza2005mixed} or \cite{yadav2013superconvergent}. 
Originating from this, different techniques are usually employed in the analysis of each family. The LDG methods \cite{gudi2008hpmixed, bustinza2004local, bustinza2005mixed, yadav2013superconvergent} have been formulated by using three unknowns in the mixed form as  proposed in \cite{cockburn1999some}, which yields an inconsistent primal formulation as analyzed, e.g., in \cite{bustinza2004local} (and \cite{perugia2002hp} for the simpler linear case). In contrast, IP methods like \cite{gudi2007discontinuous, gudi2008hp, houston2005discontinuous, ortner2007discontinuous, dolejvsi2008analysis} usually enjoy a consistent primal form.

On the other hand, the nonlinear versions of the Bassi-Rebay methods are often obtained by ad-hoc extension, mimicking the linear counter part, e.g. \cite{bassi2005discontinuous}, and to the best knowledge of the authors,  there does not exist rigorous analysis of the second Bassi--Rebay method for nonlinear problems,	 although they are widely used for nonlinear problems like the Navier--Stokes equations.

This manuscript aims to extend the discussion presented in \cite{bustinza2004local, gudi2007discontinuous, gudi2008hpmixed, gudi2008hp, houston2005discontinuous, ortner2007discontinuous}  in two ways: Firstly, we try to treat different types of discretization in a canonical way, starting from a mixed formulation as  \cite{arnold2002unified}. We will show that our formulation leads to a slightly modified version of previously proposed nonlinear formulations for symmetric IP (SIPG)  \cite{gudi2008hp}, and Bassi--Rebay \cite{bassi2005discontinuous}, while  we recover the  LDG formulation of \cite{bustinza2004local}. Here we are interested in these methods since, among the methods investigated in \cite{arnold2002unified},  these are the only ones that are stable and consistent for primal and adjoint solutions, and we might hope for extending these  properties to the corresponding nonlinear discretization. Also we prove that the approximate solution of the discontinuous Galerkin problem is well-posed; i.e, 
unique and stable, provided that the diffusion operator is strongly monotone and globally Lipschitz continuous. This is comparable to analysis of \cite{bustinza2004local} for LDG and somewhat similar to \cite{dolejvsi2008analysis} for IP; however our results cover different formulations. 
It is plausible that the same kind of analysis is applicable to cases which do not satisfy either strong monotonicity or global Lipschitz continuity as  in \cite{ortner2007discontinuous, gudi2007discontinuous, gudi2008hp}; nevertheless, we do not address this extension here and restrict our analysis to these two assumptions.
 
Secondly, we are going to provide an optimal error estimate for SIPG and Bassi--Rebay methods in terms of mesh size, both in energy norm and in the $L^2$-norm. Such estimates have previously been derived for LDG methods, in \cite{bustinza2004local} for monotone and globally Lipschtitz continuous problem, and in \cite{gudi2008hpmixed} for cases which neither of these two assumptions hold. For the SIPG method, error estimates have been presented in~\cite{gudi2008hp,  gudi2007discontinuous} and for incomplete penalty (IIPG) in \cite{ortner2007discontinuous}, for non-monotone and not globally Lipschitz operators. Also we have the results of \cite{dolejvsi2008analysis} for IIPG in case of monotone and globally Lipschitz operators.  Let us remark that since the formulations presented in \cite{
ortner2007discontinuous, dolejvsi2008analysis}  are adjoint inconsistent, they derived the error estimate only in the energy norm and did not deal with the corresponding adjoint problem.

{Besides the canonical approach of discretization we present here, the rigorous analysis of the second method of Bassi--Rebay is novel in the literature, as well as the different approach we adopted in the $L_2$ error estimate for an asymptotically adjoint consistent formulation. Furthermore, we present explicit conditions of stability  for all formulations. To the best knowledge of the authors this has not been done before for a nonlinear version of Bassi--Rebay methods and is similar to the explicit bounds for the SIPG method in \cite{shahbazi2005explicit, epshteyn2007estimation} and \cite{may2016spacetime}. {It is worth mentioning  that for the SIPG method, unlike the cited literature, our stability analysis remains valid for degenerate diffusion.} }
}

\ifdetail
\mz{Since our proposed methods in this paper, similar to  \cite{bustinza2004local} are inconsistent,  a new challenge  in our approach is handling the inconsistency terms appeared in the primal and dual formulation and proving their optimal convergence.  However, unlike \cite{bustinza2004local} which relies on the mean-value linearization between exact and DG solution, our adjoint equation comes from a linearization around the exact solution, like \cite{gudi2008hpmixed}. Also note that since we are concerned  with elliptic problems, the Gronwall lemma and the elliptic projection technique as in \cite{arnold1982interior, wheeler1978elliptic} and more recently \cite {kuvcera2010optimal} are not applicable. Another approach in the literature is \cite{abdulle2016error} which uses some projection for an special case of nonmonotone diffusion. }
\mrg{\mzdet{ as well as \cite{abdulle2011effect, abdulle2012priori} and the references cited therein. Note that \cite{abdulle2012priori, abdulle2011effect,  abdulle2016error} are for continuous finite element methods and considered the effect of the non-exact numerical integration on in the proofs.}}
\fi

\ifdetail
\mz{In addition to the argument presented in \cite{bustinza2004local} and \cite{gudi2008hpmixed}, here we present a discussion on the regularity requirement of the solution in order to obtain the $\Hm{2}$-regularity of the dual problem and the optimal error estimate consequently.}
\fi 

The structure of this paper is as  follows: In \S\ref{Sec::General} we provide a short review on the quasi-linear elliptic problems and the properties of the diffusion operator. In \S\ref{Sec::Prelim} we review some approximation results as well as the properties of triangulation of the computational domain. Section~\ref{Sec::DG} deals with our canonical approach for obtaining different DG formulation in the primal form, and later in \S\ref{Sec::Consst}, we analyze the consistency and adjoint consistency of the presented formulations. The sections \ref{Sec::stab} and \ref{Sec::Exist} are devoted to the stability and uniqueness of the DG approximate solution, respectively. Note that before reaching section~\ref{Sec::Exist} we do not require the monotonicity of the operator and the stability result is valid for more general problems. Section~\ref{Sec::Error} presents the optimal error convergence estimate in both energy and $\Ltwo$ norms.

\section{Quasi-linear elliptic problems}
\label{Sec::General}
We consider the following quasilinear elliptic problem 
\begin{align}
\label{Eq::origPDE}
-\divg  \diff(x, \uu, \nabla \uu) &= f,  &&\text{ in } \dom, \\
\uu & = \uu_D,  &&\text{ on }   \partial \dom,
\end{align}
where $f \in \Ltwo(\dom)$ and $\uu_D \in \Hm{{1/2}}(\partial \dom)$. For simplicity we will set $\uu_D \equiv 0$ in later analysis.
Also $\dom$ is a bounded and simply connected domain  in $\Real^2$. 
\ifdetail \mrg{\mzdet{Safer to stay in $2D$, the estimates we have in \cite{gudi2008hp} and \cite{bustinza2004local} is for 2D. See Schwab for 3D}}\fi
For brevity one might use the notation $\diff(\cdot, \bm{\zeta})  = \diff(\cdot, \uu, \nabla \uu) $ where $\bm{\zeta} \in \Real^3$, such that  $\zeta_0 \equiv \uu$ and $\zeta_i \equiv \uu_{x_i}, i= 1, 2$. Moreover, by $\diff_u$ and $\diffz := \big[ \frac{\partial \diff_i(\cdot, \bm{\zeta})}{\partial \zeta_j}\big]_{i, j= 1, 2}$, we denote the derivative of  $\diff(x, \uu, \nabla \uu)$ with respect to its second and third arguments.

In the general theory  of nonlinear elliptic problems (see \cite{nevcas1983introduction, vzenivsek1990nonlinear}), it is usually assumed that the function $\diff(\cdot, \bm{\zeta})=(\sdiff_1(\cdot, \bm{\zeta}), \sdiff_2(\cdot, \bm{\zeta}))$ satisfies some conditions:	
\begin{enumerate}[label= A({\roman*})]
\item 
\ifdetail \mgg{In light of assumption A.3: Do we need A.1 at all?} \fi
The functions $\sdiff_i(x, \bm{\zeta}), i = 1, 2$ are continuous in $\dom \times \Real^{3}$ and satisfy the following growth condition; 
\begin{equation}
\vert \sdiff_i(x, \bm{\zeta}) \vert \leq c_1 \Big( 1  + \sum_{i=0}^2 \vert \zeta_i \vert \Big) + \vert \phi_i(x) \vert, \qquad \forall \bm{\zeta}  \in \Real^{3}, \forall x \in \dom,
\end{equation}
where $c_1 > 0 $ and $\phi_i \in \Ltwo(\dom)$, for $i = 1, 2$.
%
\label{Ass::1}
\item There exist two constants $0 < \lambda \leq \Lambda < \infty$ such that for all $\bm{\zeta}, \bm{\psi} \in \Real^3$
\begin{equation}
\label{Eq::ass_mon_bnd}
0 < \lambda \sum_{k = 1}^2 \psi_k^2 \leq \sum_{\substack{j = 1 \\ k = 0}}^{2} \dfrac{\partial \sdiff_j(\cdot, \bm{\zeta})}{\partial \zeta_k} \psi_j \psi_k  \leq \Lambda \sum_{k = 1}^2 \psi_k^2.
\end{equation}
\ifdetail \mrg{\mzdet{, in the special case $\bm{\psi} = (0, \bm{\psi}') \in \Real^3$ where $\bm{\psi}' \in \Real^2$,}} \fi
We also might consider a relaxed version of \eqref{Eq::ass_mon_bnd} as the following 
\begin{equation}
\label{Eq::ass_stab_bnd}
0 < \lambda \sum_{k = 1}^2 \psi_k^2 \leq \sum_{\substack{j = 1 \\ k = 1}}^{2} \dfrac{\partial \sdiff_j(\cdot, \bm{\zeta})}{\partial \zeta_k} \psi_j \psi_k  \leq \Lambda \sum_{k = 1}^2 \psi_k^2.
\end{equation}

For simplicity in the later analysis we assume that $\diffz$ is symmetric. Then we can interpret \eqref{Eq::ass_stab_bnd} as imposition of lower and upper bound on the eigenvalues of $\diffz$; $\lambda$ and $\Lambda$, respectively.
\label{Ass::2}
\item The functions $\sdiff_i(x, \bm{\zeta}), i=1, 2$ are in $\mathcal{C}_b^2(\dom   \times \Real^3)$, i.e., they are twice continuously differentiable functions
with all the derivatives through second order being bounded.
\label{Ass::3}
\end{enumerate}

Note that \ref{Ass::1} is required to guarantee the meaningfulness of the of the definition of the discrete problem. On the other hand the assumptions \ref{Ass::2} and \ref{Ass::3} need to be valid only for the arguments provided in sections~\ref{Sec::Exist} and \ref{Sec::Error} for uniqueness of the discrete solution and error estimate, respectively. Furthermore, the relaxed version of assumption~\ref{Ass::2} in \eqref{Eq::ass_stab_bnd} is required in the stability analysis in section~\ref{Sec::stab}.

Let us note the following important definition for nonlinear elliptic PDEs, the so-called \emph{strong monotonicity} and \emph{global Lipschitz continuity} property of the diffusion operator (as \cite{feistauer1986finite}, \cite[p.~476]{zeidler2013nonlinear}):
\ifdetail or \mz{ \cite[p.~??]{nevcas1983introduction}}) \fi
\begin{definition}
\label{Def::Monotone}
We say a diffusion operator $\diff(x, \uu, \nabla \uu)$ satisfies strong  monotonicity property, if there exists a constant $\Csm > 0$ such that 
\begin{equation}
\label{Eq::Strong}
\inprods{\big(\diff(x, \xi', \bm{\eta}') - \diff(x, \xi, \bm{\eta}) \big)}{(\bm{\eta}' - \bm{\eta})} \geq \Csm \vert \bm{\eta} - \bm{\eta}' \vert^2 
\end{equation} 
for all $\xi, \xi' \in \Real$ and $\bm{\eta}, \bm{\eta}' \in \Real^2$. Moreover we call it a globally Lipschitz operator if there exists a constant $\Clc < \infty$ such that 
\begin{equation}
\label{Eq::Lip}
 \diff(x, \xi', \bm{\eta}') - \diff(x, \xi, \bm{\eta})  \leq \Clc \big[\vert \xi - \xi' \vert^2 +  \vert \bm{\eta} - \bm{\eta}' \vert^2 \big]^{1/2}, 
\end{equation} 
for all $\xi, \xi' \in \Real$ and $\bm{\eta}, \bm{\eta}' \in \Real^2$.
\end{definition}

One can show that if the left and right inequalities in \eqref{Eq::ass_mon_bnd} are satisfied, the diffusion operator $\diff$ is strongly monotone and globally Lipschitz continuous in the sense of Definition~\ref{Def::Monotone}. We skip the proof here and refer to \cite[Lemma 2.1]{barrett1994quasi} or \cite[Lemma 4]{dolejvsi2008analysis}.

\ifdetail
Let also simplify these two properties in a more useful form for the rest of paper:\mgg{do we really need to re-state these definitions?}
\begin{enumerate}
\item Strong monotonicity property
\begin{equation}
\Big( \diff(x, \vv, \nabla \vv) - \diff(x, \ww, \nabla \ww)   \Big) \cdot (\nabla \vv - \nabla \ww) \geq \Csm \vert \nabla \vv - \nabla \ww \vert^2
\end{equation}
\item Global Lipschitz continuity
\begin{equation}
\vert \diff(x, \vv, \nabla \vv) - \diff(x, \vv', \nabla  \vv')  \vert \leq \Clc \Big[ \vert \vv -  \vv'  \vert^2 + \vert  \nabla \vv - \nabla \vv'  \vert^2 \Big]^{1/2}
\end{equation}
\end{enumerate}
\fi

As examples of problems settled in the category of \eqref{Eq::origPDE} we have
\begin{enumerate}[label= ({\alph*})]
\item Newtonian flow model
\begin{equation}
\diff(x, \uu, \nabla \uu) = \diff(x, \uu) \nabla \uu
\end{equation}
\item Non-Newtonian flow model
\begin{equation}
\diff(x, \uu, \nabla \uu) = \diff(x, \vert \nabla \uu \vert) \nabla \uu
\end{equation}
with an specific case of mean curvature flow
\begin{equation}
\diff(x, \uu, \nabla \uu) = \dfrac{\nabla \uu}{(1 + \vert \nabla \uu \vert)^{1/2}}
\end{equation}
\end{enumerate}

We remark that for the case (b), following \cite{bustinza2004local, houston2005discontinuous}, we typically assume the monotonicity of the diffusion operator (see Definition~\ref{Def::Monotone}), while the diffusion operator in case (a) is not usually a monotone operator  (only under very restrictive assumptions, see \cite{abdulle2012priori}). Since the presentation of the primal form of DG formulations in section~\ref{Sec::DG} and stability result in section~\ref{Sec::stab} are not affected by the monotonicity property, we do not exclude this case now. But in obtaining a priori error estimate in section~\ref{Sec::Error}, we will only consider monotone cases and one can refer to \cite{ortner2007discontinuous}, \cite{gudi2007discontinuous} or \cite{gudi2008hp} for a priori estimate for non-monotone cases.  One may check that the mean curvature flow example satisfies the assumption~\ref{Ass::2}; hence it is strongly monotone and globally Lipschitz continuous, see \cite[section 5]{gudi2008hp}.

\section{Preliminaries}
\label{Sec::Prelim}
First let us set a notation convention and suppress  the dependence of the diffusion operator on the space variable $x$; henceforth we write $\diff(\vv, \bm{z})$ instead of $\diff(x, \vv, \bm{z})$, while we still allow explicit dependence on $x$.

Now, as a tool that we will use later in sections~\ref{Sec::stab} and \ref{Sec::Error}, let us consider the following integral form of Taylor's formula; for $\uu, \vv \in \Real$ and $\bm{z}, \bm{w} \in \Real^2$ one has
\begin{align}
\label{Eq::Taylor}
\diff(\vv, \bm{z}) - \diff(\uu, \bm{w}) &= \diffu(u, \bm{w})(v - u) + \diffz(u, \bm{w})(\bm{z} - \bm{w}) + R_{\diff}(u - v, \bm{w} - \bm{z})   \nonumber \\
& = \difftu(u, \bm{w})(v - u) + \difft(u, \bm{w})(\bm{z} - \bm{w})
\end{align}
where $R_{\diff}(u - v, \bm{w} - \bm{z}) = (R _{\diff_1}(u - v, \bm{w} - \bm{z}),R _{\diff_2}(u - v, \bm{w} - \bm{z}))$ is defined as
\begin{equation}
R _{\diff_i} = \tilde{\diff}_{\uu \uu} (\uu - \vv)^2 + (\bm{w} - \bm{z})\tr \tilde{\diff}_{\bm{z} \bm{z} }(\uu, \bm{w}) (\bm{w} - \bm{z}) + 2 \tilde{\diff}_{\uu  \bm{z} }(\uu, \bm{w}) (\bm{z} - \bm{w} ) (\uu - \vv).
\end{equation}
for $i = 1, 2$.
Moreover we define $\difftu, \difft, \tilde{\diff}_{\uu \uu}, \tilde{\diff}_{\uu \bm{z}}, \tilde{\diff}_{\bm{z} \bm{z}}$ as 
\begin{align*}
\difftu (\uu, \bm{w}) &=\int_0^1 \diffu(\vv(t), \bm{z}(t)) \df{t}, && \difft(\uu, \bm{w}) = \int_0^1 \diff_{\bm{z}}(\vv(t), \bm{z}(t)) \df{t} \\
 \tilde{\diff}_{\uu \uu}(\uu, \bm{w}) &= \int_0^1 (1-t) \diff_\uu(\vv(t), \bm{z}(t)) \df{t},  &&\tilde{\diff}_{\uu \bm{z}}(\uu, \bm{w})  = \int_0^1 (1-t) \diff_\uu(\vv(t), \bm{z}(t)) \df{t}, \\ \tilde{\diff}_{\bm{z} \bm{z}}(\uu, \bm{w}) &= \int_0^1 (1-t) \diff_\uu(\vv(t), \bm{z}(t)) \df{t}
\end{align*}
where $\vv(t) = \uu + t (\vv - \uu)$ and $\bm{z}(t) = \bm{w} + t (\bm{z} - \bm{w})$. 

Seeking clarity, sometimes we denote $\difftu, \difft, \tilde{\diff}_{\uu \uu}, \tilde{\diff}_{\uu \bm{z}}, \tilde{\diff}_{\bm{z} \bm{z}}$ by all four arguments, \break e.g. $\difftu(\uu, \bm{w}, \vv, \bm{z})$ instead of $\difftu(\uu, \bm{w})$.
 
\subsection{Triangulation and finite element space}
Here we are going to consider the boundary of the domain $\dom$ sufficiently smooth (in order to apply the duality argument, see section~\ref{Subsec::adjconsis}), e.g. a convex polygon. Then we consider a shape-regular triangulation on $\dom$ as $\triag = \{ \telem \}$ composed of (non-overlapping) triangular or rectangular elements (with possible hanging nodes) and $\hk$ is the diameter of each $\telem \in \triag$. Also we define $\h := \max_{\telem \in \triag} \hk$ and $\no_{\telem}$ is the outward normal to $\partial \telem$. In the following we assume that $\triag$ is of \emph{bounded variation}, that is, there exists a constant $ l > 1 $ such that
\begin{equation}
\LL^{-1} \leq \dfrac{\h_{\telem}}{\h_{\telem'}} < \LL,
\end{equation}
where $\telem, \telem' \in \triag$ share an edge. 
This bounded variation property means that there is an upper bound for the number of neighboring elements of each $\telem \in \triag$, denoted by $\non$. In case that $\triag$ has no hanging nodes $\non = 3$ and $\non = 4$ for triangular and rectangular elements, respectively.
\ifdetail \mrg{This is the same as Reusken Definition 3.3.2}\fi
We also need the following \emph{quasi-uniformity} property of the mesh in the $\Ltwo$ error analysis in section~\ref{Subsec::Ltwo}, that is
\begin{equation}
\label{Eq::quasi_unif}
\h \leq C \hk, \qquad \forall \telem \in \triag.
\end{equation}
We denote the \emph{skeleton} of the triangulation $\triag$, i.e. the set  of all edges of $\telem \in \triag$, by $\sklt$. Also we denote the set of boundary and interior edges of $\triag$ by $\skltb$ and $\sklti$, respectively, and the length of edge $\face$   by $\he$.
Following \cite{arnold2002unified}, let us fix some definitions for the jumps and average of the discontinuous functions on the skeleton $\sklt$. Let us set the trace values as $\ww_{\telem, \face} = \ww_\telem \vert_\face $. For any interior edge $\face \in \sklti$, where $\face$ is the common edge of $\telem, \telem'\in \triag$, and for all $\ww \in \prod_{\telem \in \triag} \Ltwo(\partial \telem)$, we define
\begin{equation}
\avg{\ww} = \dfrac{1}{2}(\ww_{\telem, \face}  + \ww_{\telem', \face}), \qquad \jump{\ww} = {\ww_{\telem, \face} }{\no_\telem} + {\ww_{\telem', \face}}{\no_{\telem'}}
\end{equation}
and similarly for all $\taub \in \prod_{\telem \in \triag} [ \Ltwo(\partial \telem)]^2$
\begin{equation}
\avg{\taub} = \dfrac{1}{2}(\taub_{\telem, \face} + \taub_{\telem', \face}), \qquad \jump{\taub} = \inprods{\taub_{\telem, \face}}{\no_{\telem}} + \inprods{\taub_{\telem', \face}}{\no_{\telem'}}.
\end{equation}
For any boundary edge $\face \in \skltb$ we define 
\begin{equation}
\jump{\ww} = \ww_{\telem, \face}  \no_{\telem}, \qquad \avg{\taub} = \taub_{\telem, \face} ,
\end{equation}
for all $\ww \in \prod_{\telem \in \triag} \Ltwo(\partial \telem)$ and $\taub \in \prod_{\telem \in \triag} [ \Ltwo(\partial \telem)]^2$.

Moreover,  let us consider the following broken Sobolev space on the triangulation $\triag$; for $1 \leq r < \infty$
\begin{equation}
\Wmp{r}{\smth}(\dom, \triag) = \{ \vv \in \Lp{r}: \vv \vert_{\telem} \in \Wmp{r}{\smth}, \forall \telem \in \triag \},
\end{equation}
with the corresponding norm and seminorm
\begin{equation}
\Vert \vv \Vert_{\Wmp{r}{\smth}(\dom, \triag)} = \Big( \sumk \Vert \vv \Vert^r_{\Wmp{r}{\smth}(\telem)} \Big)^{1/r}, \qquad \seminorm{\vv}_{\Wmp{r}{\smth}(\dom, \triag)} = \Big( \sumk \seminorm{ \vv}^r_{\Wmp{r}{\smth}(\telem)} \Big)^{1/r},
\end{equation}
and for the case $r = \infty$ the associated norm and seminorm are defined as
\begin{equation}
\Vert \vv \Vert_{\Wmp{\infty}{\smth}(\dom, \triag)} = \max_{\telem \in \triag } \Vert \vv \Vert_{\Wmp{\infty}{\smth}(\telem)} , \qquad \seminorm{\vv}_{\Wmp{\infty}{\smth}(\dom, \triag)} =  \max_{\telem \in \triag } \ \seminorm{ \vv}_{\Wmp{\infty}{\smth}(\telem)},
\end{equation}
when $\Vert \cdot \Vert_{\Wmp{r}{\smth}(\telem)}$ and  $\vert \cdot \vert_{\Wmp{r}{\smth}(\telem)}$ are the standard Sobolev norms on $\telem$. Also we denote $\Wmp{2}{\smth}$ as $\Hm{\smth}$ by tradition. 

Now let define the following finite dimensional approximation spaces
\begin{align}
\Vsp &:= \{ \uh \in \Ltwo(\dom) : \uh \vert_\telem \in \polp(\telem), \quad \forall \telem \in \triag \}, \\
\Sigsp &:= \{ \teth \in [\Ltwo(\dom)]^2 : \teth \vert_\telem \in [\polaltp(\telem)]^2, \quad \forall \telem \in \triag \},
\end{align}
with $\poldeg \geq 1$ and $\altpoldeg = \poldeg$ or $\altpoldeg = \poldeg - 1$ (To satisfy the inclusion property $\nabla \Vsp \subset \Sigsp$ as \cite{arnold2002unified}.) 
\ifdetail
 \mrg{\mzdet{For a recently proposed choice $\altpoldeg = \poldeg + 1$ and its properties see \cite{john2016stable}.}}.
\fi
 Here by $\polp(\telem)$ we denote the space of the polynomials of total degree $\poldeg$ on $\Real^2$ and restricted to $\telem$.

Moreover, let us define two lifting operator $\liftR: [\Ltwo(\sklt)]^2 \to \Sigsp$ and $ \liftL: \Ltwo(\sklti) \to \Sigsp$ as
\ifdetail \mgg{From here on, we need to decide whether we want $\intdom$ or $\int_{\triag}$. I guess if we use $\intdom$ we need to stick with polygonal domains. (We mention this above, so it's ok.)} \fi
\begin{align}
\label{Eq::lift_LR}
\intdom \inprods{\liftR(\varphi)}{\bm{\tau}} \dx= - \sume \inted \inprods{\varphi}{\avg{\bm{\tau}}} \ds, \qquad \intdom \inprods{\liftL(\varphi)}{\bm{\tau}} \dx= - \sumei \inted  \varphi \jump{\bm{\tau}} \ds,
\end{align}
for all  $ \bm{\tau} \in \Sigsp$. Using the Riesz representation theorem one can prove the existence and uniqueness of the lifting operators introduced by \eqref{Eq::lift_LR} (see e.g., \cite[Lemma 3.3]{bustinza2004local}).
Also we define an edge-wise version of right and left lifting operators as $\liftRe: [\Lp{2}(e)]^d \to \Sigsp$ and $\liftLe: \Lp{2}(e) \to \Sigsp$, such that
\begin{align}
\intdom \inprods{\liftRe(\varphi)}{\bm{\tau}} \dx &= - \inted \inprods{\varphi}{\avg{\bm{\tau}}} \ds, &&\forall \taub \in \Sigsp, \forall \face \in \sklt,\\
\intdom \inprods{\liftLe(\varphi)}{\bm{\tau}} \dx &= - \inted  \varphi \jump{\bm{\tau}} \ds, &&\forall \taub \in \Sigsp, \forall \face \in \sklti
\end{align}
for all edges $\face \in \sklt$. Also by noting that $\liftR(\varphi) = \sume \liftRe(\varphi) $ and, by applying Cauchy-Schwarz inequality and taking the norm over $\triag$, one has
\begin{equation}
\label{Eq::liftcauchy}
\Vert \liftR(\varphi) \Vert^2_{\Ltwo(\dom)} = \Vert \sume \liftRe(\varphi) \Vert^2_{\Ltwo(\dom)}  \leq \non   \sume \Vert  \liftRe(\varphi) \Vert^2_{\Ltwo(\dom)}.
\end{equation}
Furthermore, one might note that for any $\face \in \sklti$ and $\face \subset \partial \telem, \telem \in \triag$ it holds $\liftLe(\varphi) = 2 \liftRe(\varphi \no_\telem)$ and consequently
\begin{equation}
\label{Eq::left_lift_bound}
\Vert \liftL(\varphi) \Vert^2_{\Ltwo(\dom)} = \Vert \sume \liftLe(\varphi) \Vert^2_{\Ltwo(\dom)}  \leq \non   \sume \Vert  \liftLe(\varphi) \Vert^2_{\Ltwo(\dom)} \leq 4 \non   \sume \Vert  \liftRe(\varphi \no_\telem) \Vert^2_{\Ltwo(\dom)}.
\end{equation}
using Cauchy-Schwarz inequality.

Also let us introduce the space $\Vh := \Vsp + \Hm{2}(\dom) \bigcap  \Hm{1}_0(\dom)$ and the corresponding  energy {norm} $\ennorm{\cdot}: \Vh \to \Real$ as
\ifdetail
\mrg{As long as $\vert \partial \dom _D\vert \neq 0$ defines a norm on $\Vh$? I don't think do unless $g$ is a polynomial function. Otherwise we may need to keep the boundary in $\Vert \alpha u \Vert_{\partial_D} $ }.
\fi
\begin{equation}
\label{Eq::energynorm}
	\ennorm{\vv}^2 := \seminorm{\vv}^2_{1, \dom} + \sume \normgen{\liftR^\face(\jump{\vh})}^2_{\Ltwo(\dom)}, \qquad \forall \vv \in \Vh
\end{equation}
as well as the following seminorm $ \liftnorm{\cdot}: \Vh \to \Real$ as 
\begin{equation}
\label{Eq::lift_norm}
\liftnorm{\vv}^2 := \sume \normgen{\liftR^\face(\jump{\vv})}^2_{\Ltwo(\dom)}, \qquad \forall \vv \in \Vh.
\end{equation}
From the structure of \eqref{Eq::lift_norm}, using \eqref{Eq::liftcauchy} and \eqref{Eq::left_lift_bound} reads, there exists $C_s < \infty$ such that
\begin{equation}
\label{Eq::liftnorm_bound}
\Vert \liftR(\vv) \Vert^2_{\Ltwo(\dom)} +  \Vert \liftL(\vv) \Vert^2_{\Ltwo(\dom)} \leq C_s  \liftnorm{\vv}^2
\end{equation}
for all $\vv \in \Vh$.
Moreover, we have the following estimate  from \cite[Lemma 2]{brezzi2000discontinuous}
\begin{lemma}
\label{Lemma::lift_jump}
There exist two positive constants $\Cr, \CR >0$, such that for all $\face \in \sklt$ we have 
\begin{equation}
\label{Eq::lift_jump}
\Cr \he^{-1/2} \Vert \jump{\ww} \Vert_{\Ltwo(\face)} \leq \Vert \liftR^\face (\jump{\ww}) \Vert_{\Ltwo(\dom)}\leq 
\CR \he^{-1/2} \Vert \jump{\ww} \Vert_{\Ltwo(\face)}, \qquad \forall \ww \in \Vh,
\end{equation}
where the constants are $\h$ independent and only depend on the minimum angle of the triangles and the polynomial degree $\poldeg$.
\end{lemma}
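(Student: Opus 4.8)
The plan is to read both inequalities off a single variational characterization of the edge-wise lift. By its defining identity, $\liftR^\face(\jump{\ww}) \in \Sigsp$ is precisely the Riesz representative of the functional $\taub \mapsto -\inted \inprods{\jump{\ww}}{\avg{\taub}}\ds$, so that
\begin{equation*}
\Vert \liftR^\face(\jump{\ww}) \Vert_{\Ltwo(\dom)} = \sup_{\taub \in \Sigsp \setminus \{0\}} \frac{-\inted \inprods{\jump{\ww}}{\avg{\taub}}\ds}{\Vert \taub \Vert_{\Ltwo(\dom)}}.
\end{equation*}
First I would observe that testing against any $\taub$ that vanishes on the (at most two) elements $\telem,\telem'$ sharing $\face$ kills the right-hand side; hence $\liftR^\face(\jump{\ww})$ is supported on $\telem\cup\telem'$, and the entire lemma reduces to bounding the displayed supremum from above and below by $\he^{-1/2}\Vert\jump{\ww}\Vert_{\Ltwo(\face)}$.

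For the upper bound I would apply Cauchy--Schwarz on $\face$ to each competitor, $\big|\inted\inprods{\jump{\ww}}{\avg{\taub}}\ds\big| \le \Vert\jump{\ww}\Vert_{\Ltwo(\face)}\,\Vert\avg{\taub}\Vert_{\Ltwo(\face)}$, and then control the face norm of $\avg{\taub}$ by its volume norm. Since $\avg{\taub}$ is a polynomial on each of $\telem,\telem'$, the discrete (inverse) trace inequality $\Vert v\Vert_{\Ltwo(\face)} \le C\,\he^{-1/2}\Vert v\Vert_{\Ltwo(\telem)}$ for $v\in\polaltp(\telem)$ gives $\Vert\avg{\taub}\Vert_{\Ltwo(\face)} \le C\he^{-1/2}\Vert\taub\Vert_{\Ltwo(\telem\cup\telem')}$, with $C$ depending only on $\altpoldeg$ and the minimum angle of the mesh. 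Dividing by $\Vert\taub\Vert_{\Ltwo(\dom)}$ and passing to the supremum yields the right-hand inequality with $\CR = C$.

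For the lower bound I would instead exhibit a single well-chosen competitor. The idea is to construct $\taub\in\Sigsp$ supported on one element $\telem\supset\face$ whose normal trace on $\face$ equals a fixed multiple of the scalar jump carried by $\jump{\ww}$, with $\Vert\taub\Vert_{\Ltwo(\telem)}$ kept proportional to $\he^{1/2}\Vert\jump{\ww}\Vert_{\Ltwo(\face)}$. Concretely I would do this on the reference element $\telemref$, where the map sending a polynomial vector field to its normal trace on the reference face is a linear map between finite-dimensional spaces and therefore admits a right inverse bounded in the $\Ltwo$ norms; transplanting back to $\telem$ and tracking the affine scaling (in two dimensions the $\Ltwo$ volume norm scales like $\he$ and the face norm like $\he^{1/2}$) delivers a $\taub$ with $-\inted\inprods{\jump{\ww}}{\avg{\taub}}\ds \ge c\,\Vert\jump{\ww}\Vert^2_{\Ltwo(\face)}$ and $\Vert\taub\Vert_{\Ltwo(\dom)} \le C'\he^{1/2}\Vert\jump{\ww}\Vert_{\Ltwo(\face)}$. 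Feeding this one competitor into the supremum gives the left-hand inequality with $\Cr = c/C'$.

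The hard part is this last construction. The crux is that the lower bound only captures the portion of $\jump{\ww}$ lying in the span of the normal traces of $\Sigsp$ on $\face$, so one must argue that these traces reach the whole jump space; this is immediate when $\altpoldeg=\poldeg$, and it is exactly the point at which the choice of $\altpoldeg$ has to be reconciled with the inclusion $\nabla\Vsp\subset\Sigsp$. Granting this surjectivity, the remaining work is bookkeeping: one carries the norm equivalences established on $\telemref$ through the affine change of variables with precisely the right powers of $\he$. Because every constant produced on $\telemref$ is scale-invariant, the resulting $\Cr,\CR$ are automatically $\h$-independent and depend only on $\poldeg$ (through $\altpoldeg$) and the minimum angle, as asserted.
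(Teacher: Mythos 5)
The paper does not prove this lemma from scratch: it cites \cite[Lemma 2]{brezzi2000discontinuous} for $\ww \in \Vsp$ and disposes of the rest of $\Vh$ with a single observation, namely that in two dimensions $\Hm{2}\cap\Hm{1}_0 \subset \mathcal{C}(\dom)$, so the non-polynomial component of $\ww$ has vanishing jumps on interior edges (and on boundary edges via the zero trace); hence $\jump{\ww}$ coincides with the jump of the piecewise-polynomial part and the inequality reduces to the cited result. Your proposal instead reproves the result: the Riesz/duality characterization of $\Vert\liftRe(\jump{\ww})\Vert_{\Ltwo(\dom)}$, Cauchy--Schwarz plus the polynomial inverse trace inequality for the right-hand bound, and a bounded right inverse of the normal-trace map on the reference element, transported by affine scaling, for the left-hand bound. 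This is essentially the argument behind the cited lemma, your localization of the support of $\liftRe$ to the two elements adjacent to $\face$ is correct, and in the equal-degree case $\altpoldeg=\poldeg$ the outline is sound with constants depending only on $\poldeg$ and the minimum angle, as asserted.

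Two genuine gaps remain. First, you never supply the one step the paper actually spells out: for general $\ww\in\Vh$ you must know that $\jump{\ww}$ is an edgewise polynomial of degree at most $\poldeg$ before your lower-bound construction (or any surjectivity claim) can apply. This follows from the continuity of the $\Hm{2}\cap\Hm{1}_0$ part, but your write-up treats $\jump{\ww}$ as an arbitrary edge datum, and for a non-polynomial jump the left inequality is simply false: any $\varphi$ that is $\Ltwo(\face)$-orthogonal to the normal traces of $\Sigsp$ satisfies $\liftRe(\varphi)=0$ while $\Vert\varphi\Vert_{\Ltwo(\face)}\neq 0$. Second, the surjectivity you propose to ``grant'' cannot be granted in general: the paper's definition of $\Sigsp$ allows $\altpoldeg=\poldeg-1$, in which case the normal traces of $\Sigsp$ on $\face$ only span polynomials of degree $\poldeg-1$, so $\liftRe$ annihilates the degree-$\poldeg$ Legendre component of a jump and the lower bound fails outright --- no construction can repair it. You correctly identified this as the crux, but the conclusion is that your proof (and indeed the lemma itself, in its two-sided form) holds only for $\altpoldeg=\poldeg$; the paper's citation to the equal-degree setting of \cite{brezzi2000discontinuous} quietly carries the same restriction.
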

\begin{proof}
 The original proof in~\cite{brezzi2000discontinuous} was presented for $\wh \in \Vsp$, while in \cite{arnold2002unified} the extended version to $\Vh$ was given, exploiting the Sobolev embedding to deduce $\Vh \setminus \Vsp \subset \Hm{2} \subset \mathcal{C}(\dom)$ in $\Real^2$. Then  \eqref{Eq::lift_jump} trivially holds for all $\ww \in \Vh \setminus \Vsp$.  For the details on the explicit value of $\Cr$ and $\Cr$ we refer to \cite{brezzi2000discontinuous} and \cite{warburton2003constants}.
\end{proof}

In order to see the relation between $\ennorm{\cdot}$ and $\ltwonrm{\cdot}{\dom}$,  let us remark the following relation from \cite[Lemma 2.1]{arnold1982interior}, for all $\vv \in \Hm{1}(\dom,\triag)$
\begin{equation}
\label{Eq::Arnold_jumpnrm}
\Vert \vv \Vert^2_{\Ltwo(\dom)} \leq C(\dom, \triag) \Big[ \Vert \nabla \vv \Vert^2_{\Ltwo(\dom)} +\sume \he^{-1} \Vert \jump{\vv} \Vert^2_{\Ltwo(\face)} \Big],
\end{equation}
which holds when $\dom$ is convex. For a similar result on non-convex domains we refer to \cite{bustinza2004local, gopalakrishnan2003multilevel, rusten1996interior}.

Now, restricted to $\vv \in \Vh$ and by using \eqref{Eq::lift_jump}, \eqref{Eq::Arnold_jumpnrm} reduces to the following Poincar\'e-type inequality
\begin{equation}
\label{Eq::energynrm_eq}
 \Vert \vh \Vert^2_{\Ltwo(\dom)} \leq \Cen \ennorm{\vh}^2,
\end{equation}
with some $\Cen < \infty$, and by the definition of the energy norm \eqref{Eq::energynorm} one can write
\begin{equation}
\label{Eq::energynrm_eq_H1}
\Vert \vh \Vert^2_{\Hm{1}(\dom, \triag)} \leq (\Cen+1) \ennorm{\vh}^2.
\end{equation}
Also we will need the following inverse inequality
\begin{lemma}
Let consider $\vh \in \Vsp$, then for $r \geq 2$ there exists a constant $\Cinv > 0 $ such that
\begin{equation}
\label{Eq::Inverse}
\Vert \vh \Vert_{\Lp{r}(\telem)} \leq \Cinv \hk^{2/r - 1} \Vert \vh \Vert_{\Lp{2}(\telem)}.
\end{equation}
\end{lemma}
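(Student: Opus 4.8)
The plan is to reduce the estimate to a fixed reference element by an affine change of variables, and then to invoke the equivalence of all norms on a finite-dimensional space. First I would fix a reference element $\telemref$ (the unit triangle or unit square) and, for each $\telem \in \triag$, the affine map $F_\telem(\hat x) = B_\telem \hat x + b_\telem$ with $F_\telem(\telemref) = \telem$; since every triangle or rectangle is the affine image of $\telemref$, such a map always exists. Writing $\hatvv := \vh \circ F_\telem$, the transformed function lies in the fixed finite-dimensional space $\polp(\telemref)$, independent of $\telem$. Because $\dom \subset \Real^2$ and the mesh is shape regular, the Jacobian obeys the two-sided bound
\begin{equation}
c\,\hk^2 \leq \lvert \det B_\telem \rvert \leq C\,\hk^2,
\end{equation}
with $c, C$ depending only on the minimum angle and the element type.

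Next I would track how each norm scales under $F_\telem$. For any $s \geq 1$ the substitution $x = F_\telem(\hat x)$ gives
\begin{equation}
\Vert \vh \Vert^s_{\Lp{s}(\telem)} = \intk \lvert \vh \rvert^s \dx = \lvert \det B_\telem \rvert \int_{\telemref} \lvert \hatvv \rvert^s \df{\hat x} = \lvert \det B_\telem \rvert \, \Vert \hatvv \Vert^s_{\Lp{s}(\telemref)},
\end{equation}
so that $\Vert \vh \Vert_{\Lp{s}(\telem)} = \lvert \det B_\telem \rvert^{1/s} \Vert \hatvv \Vert_{\Lp{s}(\telemref)}$. On $\telemref$ the space $\polp(\telemref)$ is finite-dimensional, hence the $\Lp{r}$ and $\Ltwo$ norms are equivalent there: there is a constant $\hat C$, depending only on $r$ and $\poldeg$ (and the reference element) but not on $\telem$ or $\hatvv$, with $\Vert \hatvv \Vert_{\Lp{r}(\telemref)} \leq \hat C \Vert \hatvv \Vert_{\Ltwo(\telemref)}$. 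Applying the scaling identity with $s = r$ and with $s = 2$ and combining gives
\begin{equation}
\Vert \vh \Vert_{\Lp{r}(\telem)} = \lvert \det B_\telem \rvert^{1/r} \Vert \hatvv \Vert_{\Lp{r}(\telemref)} \leq \hat C \, \lvert \det B_\telem \rvert^{1/r - 1/2} \Vert \vh \Vert_{\Ltwo(\telem)}.
\end{equation}

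Finally I would insert the determinant bounds. Since $r \geq 2$ the exponent $1/r - 1/2$ is non-positive, so it is the \emph{lower} bound $\lvert \det B_\telem \rvert \geq c\,\hk^2$ that must be used, yielding $\lvert \det B_\telem \rvert^{1/r - 1/2} \leq c^{1/r - 1/2}\,\hk^{2/r - 1}$ and hence the claim with $\Cinv = \hat C\, c^{1/r - 1/2}$. The argument is essentially bookkeeping once the reference element is fixed; the only points requiring care — and what I would regard as the real content — are getting the direction of the determinant inequality right (dictated by the sign of $1/r - 1/2$) and verifying that shape regularity indeed delivers the uniform two-sided Jacobian bound for both the triangular and the rectangular (possibly hanging-node) elements admitted by the triangulation.
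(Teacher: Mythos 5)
Your proof is correct, and it is essentially the paper's own argument: the paper skips the proof and cites Ciarlet (p.~140), where the inverse estimate is established exactly by your route — affine pull-back to a fixed reference element, equivalence of norms on the finite-dimensional polynomial space there, and the shape-regularity bound $\lvert \det B_\telem \rvert \sim \hk^2$ with the determinant inequality applied in the direction dictated by the non-positive exponent $1/r - 1/2$. Nothing is missing; at most one could note that by passing through the $\Lp{\infty}$ norm on $\telemref$ the constant $\Cinv$ can even be made uniform in $r \geq 2$, though the lemma as stated does not require this.
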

The proof of this lemma can be found, e.g., in \cite[p.~140]{ciarlet1978finite} and we skip it.
\subsection{Approximation properties}

\label{Secsec::approx}
First, let state  some approximation properties in the next two lemmas
\begin{lemma}
\label{Lem::hp_approx}\cite [Lemma 2.1]{gudi2008hp}
For $\phi \in \Hm{\smth}(\telem)$, there exists a positive constant $\CA$, depending on $\smth$ and $\poldeg$, but independent of $\phi$ and $\hk$, and
 a sequence $\phi_{\hk} \in \polsp^{\poldeg}(\telem)$, $\poldeg = 1, 2, \cdots$ such that
\begin{enumerate}[label= ({\roman*})]
\ifdetail 
\mzdet{
\item for any $ 0 \leq l \leq \smth$
\begin{equation}
\Vert \phi - \phi_{\hk} \Vert_{\Hm{l}(\telem)} \leq \CA {\hk}^{\mu - l} \Vert \phi \Vert_{\Hm{\smth(\telem)}}
\end{equation}
}
\fi
\item for any $ \smth \geq  l + \frac{1}{2} $
\begin{equation}
\Vert \phi - \phi_{\hk} \Vert_{\Hm{l}(\face)} \leq \CA  {\hk}^{\mu  -l - 1/2} \Vert \phi \Vert_{\Hm{\smth(\telem)}}
\end{equation}
\item for any $ 0 \leq l \leq s - 1 + \frac{2}{r} $
\begin{equation}
\Vert \phi - \phi_{\hk} \Vert_{\Wmp{r}{l}(\telem)} \leq \CA {\hk}^{\mu -l -  1 + 2/r} \Vert \phi \Vert_{\Hm{\smth(\telem)}}
\end{equation} 
\end{enumerate}
where $\mu = \min(\poldeg+1, s)$.
\end{lemma}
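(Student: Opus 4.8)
The plan is to transport the estimate to a fixed reference element, apply the Bramble--Hilbert (Deny--Lions) lemma there, and then scale back while tracking the powers of $\hk$. Let $F_\telem:\telemref\to\telem$ be the affine map from a fixed reference triangle or square $\telemref$ onto $\telem$, written $F_\telem(\hat x)=B_\telem\hat x+b_\telem$, and set $\hat\phi:=\phi\circ F_\telem$; both triangular and rectangular elements are affine-equivalent to $\telemref$, so this is legitimate for the meshes considered here. Shape-regularity gives $\Vert B_\telem\Vert\le C\hk$, $\Vert B_\telem^{-1}\Vert\le C\hk^{-1}$ and $|\det B_\telem|\sim\hk^{\dimx}$ with $\dimx=2$, hence the usual two-sided scaling of seminorms, in particular $\seminorm{\hat\phi}_{\Hm{\smth}(\telemref)}\le C\,\hk^{\smth-\dimx/2}\seminorm{\phi}_{\Hm{\smth}(\telem)}$ and, for any $p\in\polp(\telemref)$, $\seminorm{\phi-p\circ F_\telem^{-1}}_{\Hm{l}(\telem)}\le C\,\hk^{\dimx/2-l}\seminorm{\hat\phi-p}_{\Hm{l}(\telemref)}$. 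I would then define $\phi_{\hk}:=(\Pi\hat\phi)\circ F_\telem^{-1}$, where $\Pi:\Hm{\smth}(\telemref)\to\polp(\telemref)$ is one \emph{fixed}, degree-preserving approximation operator (the averaged Taylor projection, or the $\Ltwo$-orthogonal projection onto $\polp(\telemref)$); using a single operator on $\telemref$ is precisely what lets the same $\phi_{\hk}$ serve all three estimates simultaneously.

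On $\telemref$ the Bramble--Hilbert lemma gives, for every $0\le l\le\mu$ with $\mu=\min(\poldeg+1,\smth)$,
\begin{equation}
\Vert\hat\phi-\Pi\hat\phi\Vert_{\Hm{l}(\telemref)}\le C\,\seminorm{\hat\phi}_{\Hm{\mu}(\telemref)},
\end{equation}
the cut-off at order $\mu$ encoding the saturation that occurs once $\smth$ exceeds the available degree $\poldeg+1$. Scaling this back to $\telem$ order by order (the top-order seminorm providing the dominant power for $\hk\le1$) and using $\seminorm{\hat\phi}_{\Hm{\mu}(\telemref)}\le C\,\hk^{\mu-\dimx/2}\Vert\phi\Vert_{\Hm{\smth}(\telem)}$ already yields the interior estimate $\Vert\phi-\phi_{\hk}\Vert_{\Hm{l}(\telem)}\le\CA\,\hk^{\mu-l}\Vert\phi\Vert_{\Hm{\smth}(\telem)}$, which is the backbone of the lemma. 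The two displayed estimates then follow by inserting one further classical inequality on $\telemref$ before scaling back: for the edge estimate the trace theorem $\Hm{l+1/2}(\telemref)\hookrightarrow\Hm{l}(\hat\face)$, legitimate exactly under the hypothesis $\smth\ge l+\tfrac12$ (so that the trace is defined and one may bound $\Vert\hat\phi-\Pi\hat\phi\Vert_{\Hm{l}(\hat\face)}\le C\,\seminorm{\hat\phi}_{\Hm{\mu}(\telemref)}$ in the relevant range of $l$); and for the $\Wmp{r}{l}$ estimate the Sobolev embedding $\Hm{\smth}(\telemref)\hookrightarrow\Wmp{r}{l}(\telemref)$, which in $\dimx=2$ holds precisely when $l\le\smth-1+2/r$, i.e.\ over the stated range.

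All that remains is the bookkeeping of scaling exponents. An edge carries $(\dimx-1)$-dimensional measure $\sim\hk^{\dimx-1}$, so $\Vert v\Vert_{\Lp{2}(\face)}\sim\hk^{(\dimx-1)/2}\Vert\hat v\Vert_{\Lp{2}(\hat\face)}$; combined with the factor $\hk^{\mu-\dimx/2}$ from the seminorm on the right, the total exponent becomes $\mu-l+(\dimx-1)/2-\dimx/2=\mu-l-\tfrac12$, matching the edge estimate. Likewise the $\Wmp{r}{l}$-seminorm scales as $\hk^{\dimx/r-l}$, giving $\mu-l-\dimx/2+\dimx/r=\mu-l-1+2/r$ for $\dimx=2$, matching the last estimate; in both cases the constant $\CA$ inherits dependence only on $\telemref$ (hence on the minimal angle), $\smth$, $\poldeg$, $l$ and $r$, but not on $\phi$ or $\hk$. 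I expect the one genuinely delicate point to be keeping the Bramble--Hilbert constant uniform together with the saturation $\mu=\min(\poldeg+1,\smth)$: one must fix a single reference operator $\Pi$ reproducing $\polp$, so that Deny--Lions applies to the \emph{seminorm} $\seminorm{\cdot}_{\Hm{\mu}(\telemref)}$ rather than the full norm — it is exactly this seminorm reduction that makes the powers of $\hk$ come out optimal and renders $\CA$ independent of $\phi$ and $\hk$.
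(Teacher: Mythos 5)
Your argument is correct in substance, but note that the paper itself offers no proof at all for this lemma: it simply defers to \cite[Lemma 2.1]{gudi2008hp} ``and the references cited therein'', where the result is obtained from the $hp$-approximation theory of Babu\v{s}ka--Suri with explicit tracking of the polynomial degree. Your reconstruction via a fixed reference element, a single degree-preserving operator $\Pi$ on $\telemref$, the Deny--Lions/Bramble--Hilbert reduction to the seminorm $\seminorm{\cdot}_{\Hm{\mu}(\telemref)}$, and then the trace theorem respectively the Sobolev embedding before scaling back, is the standard $h$-version route; it buys a self-contained, elementary proof, and it suffices here precisely because the lemma as stated allows $\CA$ to depend on $\poldeg$ and $\smth$ (what it gives up, relative to the cited source, is any control of the constant in $\poldeg$, which this paper never uses). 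Your exponent bookkeeping checks out: $\hk^{(\dimx-1)/2-l}\cdot\hk^{\mu-\dimx/2}=\hk^{\mu-l-1/2}$ for the edge bound and $\hk^{\dimx/r-l}\cdot\hk^{\mu-\dimx/2}=\hk^{\mu-l-1+2/r}$ for the $\Wmp{r}{l}$ bound in $\dimx=2$, and fixing one operator $\Pi$ so that all three estimates hold for the same $\phi_{\hk}$ is exactly the right observation.

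Two points deserve explicit mention rather than silence. First, the paper applies the lemma with fractional smoothness ($\smth=5/2$, $\smth_*=3/2$ in the $\Ltwo$ error analysis), while Deny--Lions as you invoke it is an integer-order statement; you should either use the fractional-order Bramble--Hilbert lemma (averaged Taylor polynomials plus interpolation of Sobolev spaces) or interpolate your integer-order estimates, both routine but not free. Second, your parenthetical that the trace step is ``legitimate exactly under $\smth\ge l+\tfrac12$'' is too generous at the endpoint: the trace map $\Hm{l+1/2}(\telemref)\to\Hm{l}(\hat\face)$ fails at equality (already $\Hm{1/2}\not\to\Ltwo$ on the boundary), so the borderline case needs either a strict inequality or a separate argument; the lemma statement inherits this imprecision from the source, and in all uses in the paper the inequality is strict, so nothing downstream is endangered. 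Relatedly, bounding $\Vert\hat\phi-\Pi\hat\phi\Vert_{\Hm{l}(\hat\face)}$ and $\Vert\hat\phi-\Pi\hat\phi\Vert_{\Wmp{r}{l}(\telemref)}$ by $\seminorm{\hat\phi}_{\Hm{\mu}(\telemref)}$ tacitly requires $l+\tfrac12\le\mu$ and $l\le\mu-1+2/r$ respectively, which you wave at with ``in the relevant range of $l$''; it holds in every instance the paper needs, but state it.
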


For the proof of this lemma we refer to \cite[Lemma 2.1]{gudi2008hp} and the references cited therein.
\ifdetail\mrg{\mzdet{all the papers are on the $p$ version, probably I can find more related to $\h$}} \fi

Using Lemma~\ref{Lem::hp_approx} and the properties of the energy norm \eqref{Eq::energynorm}, we present the following approximation result 
\begin{lemma}
\label{Lem::Approx}
For $\forall \phi \in \Vh$ there exists a constant $\CA' > 0$ independent of $\h$ and $\phi$, and a mapping $\projh:\Vh \to \Vsp $ such that
\begin{equation}
\ennorm{ \phi - \projh \phi} \leq \CA' \Big( \sumk \hk^{2(\mu - 1)} \seminorm{\phi}^2_{\Hm{\smth}(\telem)} \Big)^{1/2},
 \end{equation}
 where $\mu = \min(\poldeg+1,  \smth)$.
\end{lemma}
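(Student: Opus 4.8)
The plan is to define $\projh$ by pasting together the local approximants of Lemma~\ref{Lem::hp_approx}: on each $\telem\in\triag$ I set $(\projh\phi)|_\telem:=\phi_{\hk}\in\polp(\telem)$, the polynomial associated by that lemma with $\phi|_\telem\in\Hm{\smth}(\telem)$. Since no interelement continuity is imposed on $\Vsp$, this prescription is legitimate and yields a map $\projh:\Vh\to\Vsp$. Because the local operator reproduces polynomials of degree $\le\poldeg$, it fixes the $\Vsp$-component of $\phi$; writing $\phi=\phi_{\mathrm p}+\phi_{\mathrm c}$ with $\phi_{\mathrm p}\in\Vsp$ and $\phi_{\mathrm c}\in\Hm 2(\dom)\cap\Hm 1_0(\dom)$, only the continuous part contributes to the error, so that $\phi-\projh\phi=\phi_{\mathrm c}-\projh\phi_{\mathrm c}$ and, by the embedding $\Hm 2\hookrightarrow\mathcal C(\dom)$ in $\Real^2$, $\jump{\phi_{\mathrm c}}=0$ on every edge.

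I would then expand the energy norm from \eqref{Eq::energynorm},
\[
\ennorm{\phi-\projh\phi}^2=\seminorm{\phi-\projh\phi}^2_{1,\dom}+\sume\normgen{\liftR^\face(\jump{\phi-\projh\phi})}^2_{\Ltwo(\dom)},
\]
and bound the two sums separately. The broken-gradient term factors as $\sumk\seminorm{\phi-\phi_{\hk}}^2_{\Hm 1(\telem)}$ and is controlled directly by the interior estimate of Lemma~\ref{Lem::hp_approx}(ii) with $r=2$ and $l=1$, whose $\hk$-exponent $\mu-l-1+2/r$ equals $\mu-1$. For the lifting term I would first invoke the upper bound of Lemma~\ref{Lemma::lift_jump} to replace each lift by a weighted trace, $\normgen{\liftR^\face(\jump{w})}_{\Ltwo(\dom)}\le\CR\he^{-1/2}\Vert\jump{w}\Vert_{\Ltwo(\face)}$; then, using $\jump{\phi_{\mathrm c}}=0$, bound $\Vert\jump{\phi-\projh\phi}\Vert_{\Ltwo(\face)}$ edgewise by the two one-sided trace errors $\Vert(\phi-\phi_{\hk})|_\telem\Vert_{\Ltwo(\face)}$; and finally estimate each of these by the trace bound of Lemma~\ref{Lem::hp_approx}(i) with $l=0$, which supplies the factor $\hk^{\mu-1/2}$.

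It remains to assemble the pieces. The weight $\he^{-1}$ multiplying $\hk^{2\mu-1}$ is absorbed using that $\he$ is comparable to $\hk$ by shape-regularity and the bounded-variation hypothesis on $\triag$, which leaves $\hk^{2(\mu-1)}$; summing over $\face\in\sklt$ and re-indexing as a sum over elements costs only the factor $\non$ from the bounded number of neighbours, so that altogether $\ennorm{\phi-\projh\phi}^2\le C\sumk\hk^{2(\mu-1)}\Vert\phi\Vert^2_{\Hm{\smth}(\telem)}$ with $C$ depending only on $\CA,\CR,\non$ and the mesh constants. I expect the lifting term to be the main obstacle, since it is the one place where the scaling of the lift, the vanishing of $\jump{\phi_{\mathrm c}}$, the trace inequality and the edge-to-element bookkeeping must all interlock. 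The final refinement --- replacing the full norm $\Vert\phi\Vert_{\Hm{\smth}(\telem)}$ by the seminorm $\seminorm{\phi}_{\Hm{\smth}(\telem)}$ claimed in the statement --- is obtained by a Bramble--Hilbert and scaling argument, exploiting once more that the local operator preserves $\polp(\telem)$ so that the elementwise error sees only the top-order derivatives of $\phi$; taking $\CA'$ as the square root of the resulting constant then completes the proof.
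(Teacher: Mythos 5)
Your proof is correct and coincides with the paper's intended argument: the paper's own proof is only a two-line sketch citing precisely your ingredients, namely the local estimates of Lemma~\ref{Lem::hp_approx} (part (ii) with $r=2$, $l=1$ for the broken $\Hm{1}$ part, part (i) with $l=0$ for the edge traces), the bound of Lemma~\ref{Lemma::lift_jump} to convert lifted jumps into weighted edge norms, the continuity of the $\Hm{2}$-component of $\Vh$, and the elementwise assembly of \cite[\S 4.3]{arnold2002unified}. One minor remark: the splitting $\phi=\phi_{\mathrm p}+\phi_{\mathrm c}$ and the polynomial-reproduction property you invoke (which Lemma~\ref{Lem::hp_approx} does not actually state, though it holds for instance for the elementwise $\Ltwo$ projection that the paper later singles out) are dispensable, since $\Vert\jump{\phi-\projh\phi}\Vert_{\Ltwo(\face)}$ is already bounded by the two one-sided trace errors via the triangle inequality alone, without any continuity of $\phi_{\mathrm c}$.
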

\begin{proof}
The proof exploits Lemmas~\ref{Lemma::lift_jump} and \ref{Lem::hp_approx} and the definition of $\Vh$ which provides continuity of $\ww \in \Vh \setminus \Vsp$. Then the proof follows the same lines as \cite[section 4.3.]{arnold2002unified}.
\end{proof}

Note that in our analysis in this work, we do not need to specify the explicit type of projection $\projh$. Hence we leave it undefined with merely assuming that it satisfies the approximation property described in Lemmas~\ref{Lem::hp_approx} and \ref{Lem::Approx}. We refer to \cite{bustinza2004local} and \cite{gudi2008hp} for explicit examples of such projections. Let us remark that the Galerkin $[\Ltwo]^2$ projection that we introduce in \eqref{Eq::GalProj} also satisfies these approximation properties. 

\section{Discontinuous Galerkin formulation}
\label{Sec::DG}
Here we follow the formulation presented in \cite{bustinza2004local} (also see \cite{cockburn1999some} and \cite{perugia2002hp}). Let us start with writing the nonlinear elliptic problem \eqref{Eq::origPDE} as a system of first order nonlinear PDEs in terms of new variables  $(\sig, \tet,  \uu)$:
\begin{align}
\label{Eq::PDEfirst}
- \nabla \cdot \sig &= f, &&\text{in } \dom, \\
\sig &= \diff(\uu, \tet), &&\text{in } \dom, \\
\tet &= \nabla \uu,  &&\text{in } \dom, \\
\uu &= 0,   &&\text{on } \partial \dom. \label{eq:bdyCond}
\end{align}

Our goal is to approximate the exact solution $(\sig, \tet,  \uu)$ by discrete functions $(\sigh, \teth, \uh)$ in the finite element space $\Sigsp \times \Sigsp \times \Vsp$. The weak formulation can be written as
\begin{align}
\label{Eq::weak_sig}
&\intdom \inprods{\bm{a}(\uh, \teth)}{ \zeth} \dx = \intdom \inprods{\sigh}{\zeth} \dx, \quad &\forall
 \zeth \in \Sigsp, \\
\label{Eq::weak_tet}
&\intdom \inprods{\teth}{\tauh} \dx + \intdom {\uh}{ (\divh \tauh)} \dx = \sumk \intdk \uflux \inprods{\tauh}{\no} \ds ,  &\forall \tauh \in \Sigsp, \\
\label{Eq::weak_u}
& \intdom \inprods{\sigh}{\gradh \vh} \df{x} - \sumk \intdk {\vh}{(\inprods{\sflux}{\no})} \ds = \intdom f \vh\dx , \quad &\forall \vh \in \Vsp.
\end{align}
Here $\gradh$ and $\divh$ are the element-wise version of the gradient and divergence operator, respectively. 
\ifdetail{\mrg{\mzdet{write the space and general form of numerical fluxes, in which space?}}\fi

This \emph{flux} formulation is complete but the definition of the numerical fluxes $\uflux$ and $\sflux$ which depends on $(\sigh, \teth, \uh)$, and needs to be designed carefully. \ifdetail{such that the method has good well-posedness and compatibility properties.}\fi We postpone the explicit definition of these numerical fluxes till the next section, where we  present the equivalent \emph{primal} formulation; i.e., a formulation which has $\uh$ as its only unknown.

The only requirement we impose here is that the $\uflux$ should be independent of $\teth$ and  $\sigh$, i.e., $\uflux = \uflux(\uh)$, which provides the availability of the primal formulation. All the formulations we are going to present have this local property. For examples of other forms of non-local formulation and their analysis we refer to \cite{yadav2013superconvergent}, \cite{gudi2008hpmixed} and \cite{gatica2002solvability}.
\subsection{Primal formulation} 
In order to obtain the primal formulation we need to solve the unknowns $\sigh$ and $\teth$ in terms of $\uh$. In the first step, by using \eqref{Eq::weak_sig}, one can solve for $\sigh$ as the Galerkin $[\Ltwo(\dom)]^2$ projection,
\begin{equation}
\label{Eq::sig_weak_proj}
\sigh = \galproj\big(\diff(\uh, \teth)\big),
\end{equation}
where  $\galproj: [\Ltwo(\dom)]^2 \to \Sigsp $ has the following property; for all $\bm{\xi} \in [\Ltwo(\dom)]^2$
\begin{equation}
\label{Eq::GalProj}
\sumk \intk \inprods{\bm{\xi}}{\taub} \dx = \sumk \intk \inprods{\galproj(\bm{\xi})}{\taub} \dx, \qquad \forall \taub \in \Sigsp.
\end{equation}
Moreover, let us remark the following identity; for any $\vv \in \prod_{\telem \in \triag} \Ltwo(\partial \telem)$ and $\bm{\xi} \in \prod_{\telem \in \triag} [\Ltwo(\partial \telem)]^2$ the following holds 
\begin{equation}
\label{Eq::jump_avg_relation}
\sumk \intdk \vv \inprods{\bm{\xi}}{\no}  \ds =  \sumei \inted \avg{\vv} \jump{\bm{\xi}} \ds +  \sume \inted \inprods{\jump{\vv}}{ \avg{\bm{\xi}}} \ds.
\end{equation} 
Applying \eqref{Eq::jump_avg_relation} in \eqref{Eq::weak_tet} and \eqref{Eq::weak_u}, one can write
\begin{align*}
&\intdom \inprods{\teth}{\tauh} \dx - \intdom \inprods{\gradh \uh}{\tauh} \dx + \sumei \inted \avg{\uh - \uflux} \jump{\tauh} \ds + \sume \inted \inprods{\jump{\uh- \uflux}}{\avg{\tauh}} \ds  = 0, \\
&\intdom \inprods{\sigh}{\gradh \vh} \dx - \sume \inted \inprods{\avg{\sflux}}{\jump{\vh}} \ds -  \sumei \inted {\jump{\sflux}}{\avg{\vh}} \ds  = \intdom f \vh \dx.
\end{align*} 	
Now let us require the numerical fluxes $\uflux$ and $\sflux$ to be conservative, which leads to 
\begin{equation}
\label{Eq::Flux_conservation}
\jump{\uflux} = 0, \qquad \avg{\uflux} = \uflux, \qquad \jump{\sflux} =0, \qquad \avg{\sflux} =\sflux,
\end{equation}	
on any $\face \in \sklti$. Also in accordance with the Dirichlet boundary condition  \eqref{eq:bdyCond}, we set $\uflux = 0$ on $\face \in \skltb$.
Using \eqref{Eq::Flux_conservation}, one can simplify the weak formulation as
\begin{align}
&\intdom \inprods{\teth}{\tauh} \dx - \intdom \inprods{\gradh \uh}{\tauh} \dx + \sumei \inted (\avg{\uh} - \uflux) \jump{\tauh} \ds + \sume \inted \inprods{\jump{\uh}}{\avg{\tauh}} \ds = 0, \\
&\intdom \inprods{\sigh}{\gradh \vh} \dx - \sume \inted \inprods{\sflux}{\jump{\vh}} \ds  = \intdom f \vh \dx.
\label{Eq::FinalSecond}
\end{align} 
Using the definition of lifting operator \eqref{Eq::lift_LR} and \eqref{Eq::sig_weak_proj}, we arrive at
\begin{align}
\teth &= \gradh \uh + \liftR(\jump{\uh }) + \liftL(\avg{\uh} - \uflux), 
\label{Eq::teteq}\\
\sigh &= \galproj\big(\diff(\uh, \gradh \uh + \liftR(\jump{\uh }) + \liftL(\avg{\uh} - \uflux)) \big).
\label{Eq::sigeq}
\end{align}
Now $(\teth, \sigh)$ can be solved locally in terms of $\uh$ by inserting \eqref{Eq::teteq} and \eqref{Eq::sigeq} in \eqref{Eq::FinalSecond}. Then one can obtain the primal formulation as the following; find $\uh \in  \Vsp$ such that
\begin{equation}
\label{Eq::primal}
\difffrm(\uh, \vh) = \rhsfrm(\vh), \quad \forall \vh \in \Vsp,
\end{equation}
where $\displaystyle \rhsfrm(\vh) = \intdom f \vh \dx$, and
\begin{equation}
	\label{eq:generalPrimal}
	\difffrm(\uh, \vh) =\intdom \inprods{\diff\big(\uh, \gradh \uh + \liftR(\jump{\uh }) + \liftL(\avg{\uh} - \uflux) \big)}{\gradh \vh} \dx - \sume \inted \inprods{\sflux}{\jump{\vh}} \ds  .
\end{equation}

 Note that in the rest of the paper we might abuse the notation $\teth$ defined in \eqref{Eq::teteq} in the operator form 
 as $\teth(\cdot) := \gradh( \cdot) + \liftR(\jump{\cdot})+ \liftL(\avg{\cdot} - \uflux(\cdot))$. 
 
The only undefined part in the primal formulation~\eqref{eq:generalPrimal} is the explicit definition of the numerical fluxes.
We are going to consider four different formulations by adopting different $\uflux$ and $\sflux$:
\begin{enumerate}[label=(\roman*)]
 \item \textbf{BR1.} The BR1 formulation is defined as in \cite{bassi1997high1}, 
\begin{equation*}
\uflux = \begin{cases} \avg{\uh} &\text{on } \sklti  \\ 0 & \text{on } \skltb \end{cases},  \qquad \sflux =  \avg{ \sigh}\quad \text{on } \sklt.
\end{equation*}
Hence, $\teth = \gradh \uh + \liftR(\jump{\uh})$. 
\ifdetail
 and 
\begin{equation*}
- \sume \inted \inprods{\jump{\vh}}{\avg{\sigh}} \ds = \intdom \inprods{\liftR(\jump{\vh})}{\sigh} \dx = \intdom \inprods{\bm{a}(\uh, \teth)}{\liftR(\jump{\vh})} \dx.
\end{equation*}
\fi
This leads to the following quasi-linear formulation
\begin{align}
\label{Eq::Brone}
\difffrm (\uh, \vh ) = &\intdom \inprods{ \diff(\uh, \teth(\uh))}{\teth(\vh)} \dx.
\end{align}

 \item \textbf{BR2.} The BR2  formulation, inherited from the original definition  \cite{bassi1997high},  is defined as
\begin{equation*}
\uflux = \begin{cases} \avg{\uh} &\text{on } \sklti  \\
0 & \text{on } \skltb \end{cases}, \qquad 
\sflux  = \avg{ \galproj\big(\diff(\uh,  \nabla \uh  )  + \eta_\face \diff(\uh,\liftR^e(\jump{\uh}))\big)}  \quad \text{on } \sklt
\end{equation*} 
with some $\eta_\face>0$ as the stabilization parameter. 
\ifdetail
\mzdet{
This gives
\begin{align*}
- \sume \inted \inprods{\jump{\vh}}{\sflux} \ds &=  \intdom \inprods{\liftR(\jump{\vh})}{\diff(\uh, \nabla \uh)} \dx + \sume \eta_\face \intdom \inprods{\diff(\uh, \liftR^e(\jump{\uh}) ) }{ \liftR^e(\jump{\vh}) } \dx, 
\end{align*}
}
\fi
Here, $\teth$ is the same as in BR1, and   the primal formulation reads
\begin{align}
\label{Eq::Brtwo}
B(\uh, \vh)  &= \intdom \inprods{\diff(\uh, \teth)}{\gradh \vh} + \inprods{\diff(\uh, \gradh \uh)}{\liftR(\jump{\vh})} \dx  \\
& \quad + \sume \eta_\face \intdom \inprods{\diff(\uh, \liftR^e(\jump{\uh}) )}{ \liftR^e(\jump{\vh}) } \dx. \nonumber	
\end{align}
 
 \item \textbf{SIPG.}  Similar to \cite{arnold2002unified}, we choose the fluxes as
\begin{equation*}
\uflux = \begin{cases} \avg{\uh} & \text{on } \sklti \\ 0 & \text{on } \skltb \end{cases}, \qquad 
\sflux = \avg{ \galproj(\diff(\uh,  \nabla \uh  )} - \frac{\pen}{\he}  \jump{\uh} \quad \text{on } \sklt
\end{equation*}
with some penalty parameter $\pen > 0$. Similarly to BR2, the primal formulation is given as
\begin{align}
\label{Eq::sipg}
\difffrm(\uh, \vh )& = \intdom \inprods{ \diff(\uh, \teth)}{\gradh \vh} + \inprods{ \diff(\uh, \gradh \uh)}{\liftR(\jump{\vh})} \dx  + \sume  \frac{\pen}{\he}  \inted \inprods{\jump{\uh}}{\jump{\vh}} \ds.
\end{align}
 \item \textbf{LDG.} The LDG formulation, inherited from the original version in \cite{cockburn1998local}, can be obtained by setting 
\begin{equation*}
\uflux =\begin{cases} \avg{\uh} -  \inprods{‌\bldg}{ \jump{\uh}} & \text{on } \sklti \\  0 & \text{on } \skltb \end{cases}, \qquad  \sflux = \begin{cases} \avg{ \sigh}  + \bldg	\jump{\sigh} - \frac{\pen}{\he}  \jump{\uh} & \text{on } \sklti \\ \avg{ \sigh} - \frac{\pen}{\he}  \jump{\uh} & \text{on } \skltb \end{cases}
\end{equation*}
where $\bldg \in [\Ltwo(\sklti)]^2$ is some mesh-dependent parameter and constant on each edge.
Also like SIPG, we have the penalty parameter $\pen > 0$. Note that  in LDG formulation $\teth = \gradh \uh + \liftR(\jump{\uh }) +  \liftL(\inprods{\bldg}{\jump{\uh}})$.
\ifdetail
\mzdet{
So, 
\begin{align*}
- \inte \inprods{\jump{\vh}}{\sflux} \ds  &= \intdom \inprods{\diff(\uh, \teth(\uh))}{(\liftR(\jump{\vh}) + \liftL(\inprods{\bldg}{\jump{\vh}}))} \dx  +  \sume \frac{\pen}{\he}  \inted \inprods{\jump{\uh}}{\jump{\vh}} \ds.
\end{align*}
}\fi
 Finally, the primal formulation reads 
\begin{align}
\label{Eq::LDG}
\difffrm(\uh, \vh) = \intdom \inprods{\diff(\uh, \teth(\uh))}{ \teth(\vh)} \dx  + \sume \frac{\pen}{\he}  \inted \inprods{\jump{\uh}}{\jump{\vh}} \ds. 
\end{align}
\end{enumerate}

{\begin{remark}
Seeking comparison of  the current formulations with the already proposed versions in the literature, let us record a few observations. {Firstly, it is worth pointing out that those fluxes which depend only on $\uh$ and $\sigh$ (BR1, LDG) do not require any formal modification compared to the linear case.}
Furthermore, our LDG formulation is similar to  \cite{bustinza2004local} and the discretization proposed in \cite{gudi2008hpmixed} for a simpler form of nonlinearity in the diffusion.


For the SIPG formulation, looking at \eqref{Eq::sipg} and exploiting Taylor's expansion \eqref{Eq::Taylor}, we can write 
\begin{align*}
\difffrm(\uh, \vh )& = \intdom \inprods{ \diff(\uh,  \gradh \uh)}{\gradh \vh} \dx + \sume \inted \dfrac{\pen}{\he} \inprods{\jump{\uh}}{\jump{\vh}} \ds\\
& \quad  + \intdom \inprods{ \difft(\uh, \gradh \uh, \teth) \liftR(\jump{\uh})}{\gradh \vh} + \inprods{ \diff(\uh, \gradh \uh)}{\liftR(\jump{\vh})} \dx  \\
& = \intdom \inprods{ \diff(\uh,  \gradh \uh)}{\gradh \vh} \dx  + \sume \inted \dfrac{\pen}{\he} \inprods{\jump{\uh}}{\jump{\vh}} \ds \\
& \quad - \sume \inted   \inprods{\jump{\uh}}{\avg{\galproj(\difft(\uh, \nabla \uh, \teth)  \nabla \vh)}} + \inprods{\avg{ \galproj(\diff(\uh, \nabla \uh))}}{\jump{\vh}} \ds.
\end{align*}
Ignoring the boundary terms, the primal formulation proposed in  \cite{gudi2008hp} (or \cite{hartmann2005symmetric} in simpler case) reads as
\begin{align*}
\difffrm(\uh, \vh ) & = \intdom \inprods{ \diff(\uh,  \gradh \uh)}{\gradh \vh} \dx + \sume \inted \dfrac{\pen}{\he} \inprods{\jump{\uh}}{\jump{\vh}} \ds\\
& \quad - \sume \inted   \inprods{\jump{\uh}}{\avg{\diffz(\uh, \gradh \uh, \teth)  \nabla \vh}} + \inprods{\avg{ \diff(\uh, \gradh \uh)}}{\jump{\vh}} \ds
\end{align*}		
which shows two differences  in the second term: in \cite{gudi2008hp} there is  $\diffz$ instead of the average  $\difft$, which is a consequence of the direct consideration of the primal form. Also in our formulation we have an additional Galerkin projection, which seems essential to  later proofs in the paper.
Also one might notice that while $\diffz$ needs to be computed explicitly in \cite{gudi2008hp}, in our formulation  $\difft$ only appears in the analysis and there is no need to compute and implement this term.

Furthermore, comparing the current primal BR2 formulation with the version proposed in \cite{bassi2005discontinuous} for the case of nonlinear Helmholtz equation, and by following similar arguments to those  shown  for SIPG, one observes that the only difference of two formulations is in the application of the additional Galerkin projection in our formulation. 
\ifdetail\mrg{\mzdet{Add these details in the thesis}}  \fi
\end{remark}
}

\section{Consistency and adjoint consistency}
\label{Sec::Consst}
It is clear that due to the discrete nature of the lifting operator and the Galerkin projection, the primal formulation \eqref{Eq::primal} is inconsistent with the exact solution (for all four presented methods). This is also the case for the formulations presented in  \cite{bustinza2004local} and \cite{perugia2002hp}. Moreover, the scheme is inconsistent with the smooth solution of its corresponding adjoint problem constructed by the linearization in the neighborhood of the exact solution. 

In order to investigate these consistency errors, we consider a smooth solution to \eqref{Eq::origPDE}. In sections~\ref{Subsec::consis} and \ref{Subsec::adjconsis} we  prove that, despite of the primal and adjoint inconsistency of the formulations, and in case of sufficiently regular solutions, one  obtains asymptotic consistency in the mesh refining limit, for primal as well as for the adjoint problem.  

In the following of this paper, due to the required regularity in the adjoint analysis, we assume that the exact solution of the problem \eqref{Eq::origPDE}, denoted by $\uu$, belongs to $\Wmp{\infty}{2} (\dom)$.

\ifdetail\mzdet{
One might confer with \cite[Remark 3.4]{gudi2008hp} for the discussion on a weaker assumption on derivatives of $\diff$.
However, in the consistency analysis of the primal formulation in section~\ref{Subsec::consis} we only need $\uu \in \Hm{{1+\epsilon}}$, $\epsilon > 0$ (to guarantee $\jump{\uu} = 0$). 
On the other hand in section~\ref{Subsec::Ltwo} we need a technical assumption $\uu \in \Hm{{5/2}}(\dom, \triag)$ for obtaining the optimal error convergence in $\Ltwo$ norm. In order to avoid working with different regularity at each section we adopt the strongest one  and set $\uu \in \Wmp{\infty}{2}(\dom)$ (one might prefer an almost equivalent space, $\Hm{{3+\epsilon}}(\dom)$, $\epsilon > 0$). 
}\fi

\subsection{Consistency}
\label{Subsec::consis}
Due to the assumption~\ref{Ass::1}, the fact that $f \in \Ltwo(\dom)$ and $- \nabla \cdot \diff(\cdot, \uu, \nabla \uu) = f $ on $\dom$ for the exact solution $\uu$, we know that $\diff( \uu, \nabla \uu) \in H(\mathrm{div}; \dom)$. Hence, $\jump{\diff( \uu, \nabla \uu)} = 0$ on each $\face \in \sklti$.
Also note that  we have $\teth(\uu) = \nabla \uu$ for all schemes. Let us first  consider the  schemes BR1, BR2, and SIPG. For these schemes, and for any $\vv \in \Vh$, one can rewrite the left hand side of \eqref{eq:generalPrimal} as
\begin{align*}
\difffrm(\uu, \vv ) & =  \intdom \inprods{ \diff(\uu,  \nabla \uu)}{\gradh  \vv} \dx   + \inprods{ \diff(\uu, \nabla \uu)}{\liftR(\jump{\vv})} \dx \\
& = \intdom \inprods{ \diff(\uu, \nabla  \uu)}{\gradh  \vv} \dx  - \! \sume \! \inted \inprods{ \avg{\diff(\uu, \nabla \uu)}}{\jump{\vv}} \ds + \! \sume \! \inted \inprods{ \avg{(I - \galproj)(\diff(\uu, \nabla \uu))}}{\jump{\vv}} \ds. 
\end{align*} 
Applying the divergence theorem on the first two terms on the right hand side, and using the continuity of  $\diff( \uu, \nabla \uu) \cdot \no_\telem$  on the interfaces yield
\begin{align*}
\difffrm(\uu, \vv )& = - \intdom \nabla \cdot \diff(\uu,  \nabla  \uu) \vv \dx  +  \sume \inted \inprods{ \avg{(I - \galproj)(\diff(\uu, \nabla \uu))}}{\jump{\vv}} \ds.
\end{align*} 
Noting  \eqref{Eq::origPDE}, the consistency error for the primal formulation is the same for BR1, BR2 and SIPG; that is 
\begin{align}
\label{eq:consErrPrimal}
\prmcnst(\uu, \vv) := \difffrm(\uu, \vv ) - \rhsfrm(\vv) =  \sume \inted \inprods{ \avg{(I - \galproj)(\diff(\uu, \nabla \uu))}}{\jump{\vv}} \ds, \qquad \forall \vv \in \Vh.
\end{align}
Similarly, for LDG scheme, following the same lines as \cite{bustinza2004local} we obtain
\begin{equation}
\prmcnst(\uu, \vv) :=  \sume \inted \inprods{ \avg{(I - \galproj)(\diff(\uu, \nabla \uu))} }{\jump{\vv}} \ds - \sumei \inted \inprods{ \bldg \jump{(I - \galproj)(\diff(\uu, \nabla \uu))}  }{\jump{\vv}} \ds.
\end{equation}
In general, even for very regular solutions, or in case of linear diffusion $\diff(\uu, \nabla \uu) = \nabla \uu$, this consistency error is not zero and in fact is equal to the Galerkin projection error into the polynomial space $\Sigsp$. \ifdetail\mgg{\mzdet{But for $\diff(\uh, \nabla \uh) = \nabla \uh$  the Galerkin projection should be the identity.}}\fi  However, provided that the diffusion $\diff$  is regular enough, the formulations are asymptotically  consistent; i.e., $\Vert \prmcnst(\uu, \cdot) \Vert_{\Vhd} \to 0$ as $\h \to 0$. 
Note that $\Vert \cdot \Vert_{\Vhd}$ is the dual norm on space $\Vh$ defined as 
\begin{equation}
\Vert A(\cdot) \Vert_{\Vhd} := \sup_{0 \neq \ww \in \Vh} \dfrac{ \vert A(\ww) \vert}{\ennorm{\ww}},
\end{equation}
where $A: \Vh \to \Vhd$ is a linear continuous operator on $\Vh$.

In order to prove this asymptotic consistency, one might find  an upper bound for $\prmcnst(\uu,  \vv)$ which vanishes as $\h$ goes to zero.  Here we present a generalized form of \cite[Lemma 5.2]{bustinza2004local} for different discretizations, which provides us with such an estimate:
\begin{lemma}
\label{Lem::Cnsst}
Assume $\diff(\uu, \nabla \uu) \in \Hm{{\smth_*}}( \dom, \triag)$ with some	 non-negative integer $\smth_*$. Then, there exists $\Ccon > 0$, independent of $\h$ but dependent on $\poldeg$ and $\smth_*$, such that for the schemes presented in section~\ref{Sec::DG} the following holds
\begin{equation}
\vert \difffrm(\uu, \ww) - \rhsfrm(\ww) \vert \leq \Ccon \Big( \sumk \h_\telem^{2\mu_*}  \Vert \diff(\uu, \nabla \uu) \Vert^2_{\Hm{{\smth_*}}(\telem)} \Big)^{1/2} \ennorm{\ww},
\end{equation}
 for all $\ww \in \Vh$, where $\mu_* = \min(\smth_*, \poldeg+1)$.
\end{lemma}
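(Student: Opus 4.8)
The plan is to start from the closed form of the consistency error and reduce everything to a product of a pure approximation quantity and a factor controlled by the energy norm. For \brone, \brtwo, and \sipg this is exactly \eqref{eq:consErrPrimal},
\[
\prmcnst(\uu,\ww)=\sume\inted\inprods{\avg{(I-\galproj)(\diff(\uu,\nabla\uu))}}{\jump{\ww}}\ds,
\]
and for \ldg there is one additional term involving $\bldg\,\jump{(I-\galproj)(\diff(\uu,\nabla\uu))}$. Treating the common term edge-by-edge, I insert a balancing weight $\he^{1/2}\he^{-1/2}$, apply the Cauchy--Schwarz inequality on each $\face$ and then over the skeleton to obtain
\[
\vert\prmcnst(\uu,\ww)\vert\le\Big(\sume\he\,\Vert\avg{(I-\galproj)(\diff(\uu,\nabla\uu))}\Vert^2_{\Ltwo(\face)}\Big)^{1/2}\Big(\sume\he^{-1}\Vert\jump{\ww}\Vert^2_{\Ltwo(\face)}\Big)^{1/2}.
\]

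For the second factor I would invoke Lemma~\ref{Lemma::lift_jump}, which gives $\he^{-1}\Vert\jump{\ww}\Vert^2_{\Ltwo(\face)}\le\Cr^{-2}\Vert\liftRe(\jump{\ww})\Vert^2_{\Ltwo(\dom)}$; summing over edges and using the definitions \eqref{Eq::lift_norm} and \eqref{Eq::energynorm} yields $\sume\he^{-1}\Vert\jump{\ww}\Vert^2_{\Ltwo(\face)}\le\Cr^{-2}\liftnorm{\ww}^2\le\Cr^{-2}\ennorm{\ww}^2$, so this factor is at most $\Cr^{-1}\ennorm{\ww}$. For the first factor I bound the average on each edge by the element traces of $(I-\galproj)(\diff(\uu,\nabla\uu))$ from the (one or two) adjacent elements and then apply the element-wise \emph{trace} approximation estimate of Lemma~\ref{Lem::hp_approx}(i), valid for $\galproj$ by the remark following Lemma~\ref{Lem::Approx}, namely $\Vert(I-\galproj)(\diff(\uu,\nabla\uu))\Vert_{\Ltwo(\face)}\le\CA\,\hk^{\mu_*-1/2}\Vert\diff(\uu,\nabla\uu)\Vert_{\Hm{\smth_*}(\telem)}$ with $\mu_*=\min(\poldeg+1,\smth_*)$. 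Since $\he\le\hk$ for any edge of $\telem$, the weight collapses to $\he\,\hk^{2\mu_*-1}\le\hk^{2\mu_*}$, and a final conversion of the edge sum into the element sum $\sumk$ — using that, by bounded variation, each $\telem$ belongs to at most $\non$ edges and that neighbouring diameters are comparable — produces the asserted bound with $\Ccon$ depending on $\Cr$, $\CA$, $\non$, $\poldeg$, $\smth_*$.

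The \ldg scheme requires only the extra observation that its additional term $-\sumei\inted\inprods{\bldg\,\jump{(I-\galproj)(\diff(\uu,\nabla\uu))}}{\jump{\ww}}\ds$ is controlled in the same way: the normal jump $\jump{(I-\galproj)(\diff(\uu,\nabla\uu))}$ is bounded on each interior edge by the normal traces of $(I-\galproj)(\diff(\uu,\nabla\uu))$ from the two adjacent elements (and, since $\diff(\uu,\nabla\uu)\in H(\mathrm{div};\dom)$, $\jump{\diff(\uu,\nabla\uu)}=0$, so only the projection error survives), whence the same trace estimate applies; with $\bldg$ bounded and constant per edge the term is absorbed into $\Ccon$. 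I expect the genuinely delicate point — and thus the main obstacle — to be the careful bookkeeping between edge sums and element sums under the bounded-variation/quasi-uniformity hypotheses, together with pinning down the correct approximation order of the discontinuous $\Ltwo$-projection $\galproj$ onto $\Sigsp$ (degree $\altpoldeg$) so that it is consistent with the claimed $\mu_*=\min(\poldeg+1,\smth_*)$; the remaining steps are the standard Cauchy--Schwarz splitting and the lifting--jump equivalence.
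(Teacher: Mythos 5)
Your proposal is correct and is essentially the paper's proof written out in full: the paper disposes of the lemma by citing \cite[Lemma 5.2]{bustinza2004local} for the LDG case and observing that $\bldg=0$ recovers BR1, BR2 and SIPG, and the argument in that reference is exactly your weighted Cauchy--Schwarz splitting, the lifting--jump equivalence of Lemma~\ref{Lemma::lift_jump} for the $\ennorm{\ww}$ factor, and the elementwise trace approximation estimate for the Galerkin projection error. The one point you rightly flag --- that the claimed rate $\mu_*=\min(\smth_*,\poldeg+1)$ presumes $\galproj$ projects onto degree-$\poldeg$ polynomials, so for the choice $\altpoldeg=\poldeg-1$ the projection error would only deliver $\min(\smth_*,\poldeg)$ --- is a genuine subtlety that the paper's citation-style proof silently inherits rather than resolves.
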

\begin{proof}
The proof is  provided in \cite{bustinza2004local} for the LDG scheme, which in case $\bldg  = 0$  gives the desired result	 for BR1, BR2, and SIPG.
\end{proof}
\subsection{Adjoint consistency}
\label{Subsec::adjconsis}
Adjoint consistency is an important property in obtaining the optimal $\Ltwo$ convergence rate for the solution, and super-convergence of the target functionals (cf. \cite{hartmann2007adjoint} or \cite{pierce2000adjoint}). 
In order to apply Aubin-Nitsche duality argument, let us consider the following auxiliary dual problem
\begin{align}
\label{Eq::Adjoint}
- \nabla \cdot (\diffz(\uu, \nabla \uu) \nabla \psi ) + \diffu (\uu, \nabla \uu) \cdot \nabla \psi &= \uu -\uh,  &&\text{in } \dom, \\
\psi & = 0, &&\text{on } \partial \dom, 
\end{align}
where $\uu$ is the exact  solution of \eqref{Eq::origPDE}. From assumption~\ref{Ass::3} and $\uu \in \Wmp{\infty}{2} (\dom)$, and {provided that $\uh \in \Ltwo(\dom)$ (cf. section~\ref{Sec::stab})} one can check that
\begin{equation}
 \label{Eq::adjint_prop}
 \diffu(\uu, \nabla \uu) \in \Wmp{\infty}{1}(\dom), \quad  \diffz(\uu, \nabla \uu) \in \Wmp{\infty}{1}(\dom), \quad  \uu -\uh \in \Ltwo(\dom).
\end{equation}
 
Using \eqref{Eq::adjint_prop} combined with the convexity of $\dom$, it is a classical result that  the unique  solution of the adjoint problem, $\psi \in \Hm{2} \cap \Hm{1}_0 $ satisfies the following elliptic regularity  \cite[Theorem 9.1.22]{opac-b1081643}
\begin{equation}
\label{Eq::regularity}
\Vert \psi \Vert_{\Hm{2}(\dom)} \leq C \Vert \uu - \uh \Vert_{\Ltwo(\dom)}.
\end{equation}
Moreover,  from \eqref{Eq::adjint_prop} and the structure of  \eqref{Eq::Adjoint}, one can conclude that $\diffz(\uu, \nabla \uu) \nabla \psi \in H(\mathrm{div}, \dom)$. \ifdetail since $\uu - \uh \in \Ltwo(\dom)$,  and $\diffu (\uu, \nabla \uu) \cdot \nabla \psi \in \Hm{1}(\dom)$.\fi Therefore one gets  $\jump{\diffz(\uu, \nabla \uu) \nabla \psi} = 0$, on any interior edge $\face \in \sklti$.

In order to do the linearization, we take the Fr\'echet derivative of $\difffrm(\uu, \vv)$  around the exact solution $\uu$. For the BR1 formulation  one might get
\begin{align}
\label{Eq::Frechet_brone}
\difffrm'[\uu](\ww, \vv) = \intdom \inprods{\big( \diffu(\uu, \nabla  \uu) \ww + \diffz(\uu, \nabla  \uu) (\gradh  \ww + \liftR(\jump{\ww})) \big)}{\big(\gradh  \vv + \liftR(\jump{\vv})\big)} \dx.
\end{align}
Similarly, for BR2 and SIPG one has
\ifdetail
\mzdet{
with some lines of computation and using that decomposition in terms of BR1 as \eqref{Eq::decompos} one can write
\begin{align*}
\Tone(\uu,\vv) = -\intdom \inprods{\Big(\diff(\uu, \teth(\uu)) - \diff(\uu, \nabla \uu) \Big)}{\liftR (\jump{\vv})} \dx, 
\end{align*}
\begin{align*}
{\Tone}'[\uu](\ww,\vv) = -\intdom \inprods{\Big(\diffz(\uu, \nabla \uu)(\nabla \ww + \liftR(\jump{\ww})) - \diffz(\uu, \nabla \uu)(\nabla \ww )  \Big)}{\liftR (\jump{\vv})} \dx, 
\end{align*}
}\fi
\begin{align}
\label{Eq::Frechet_brtwo_sipg}
\difffrm'[\uu](\ww, \vv) &= \intdom \inprods{\big( \diffu(\uu, \nabla  \uu) \ww + \diffz(\uu, \nabla  \uu) (\gradh  \ww + \liftR(\jump{\ww})) \big)}{\big(\gradh  \vv + \liftR(\jump{\vv})\big)} \dx \nonumber \\
& 
\quad + \intdom \inprods{ \diffz(\uu, \nabla \uu) \liftR(\jump{\ww}) }{\liftR(\jump{\vv})} \dx  + F(\uu, \ww, \vv),
\end{align}
where the term $F$ is the Fr\'echet derivative of the penalty term; there holds
\begin{equation}
F(\uu, \ww, \vv) = \sume \inted \frac{\pen}{\he} \inprods{\jump{\ww}}{\jump{\vv}} \ds, \qquad  F(\uu, \ww, \vv) = \sume \eta_\face \intdom  \inprods{\liftR^\face(\jump{\ww})}{\liftR^\face(\jump{\vv})} \dx 
\end{equation}
for SIPG and BR2, respectively.
For the LDG formulation we have
\begin{align*}
\difffrm'[\uu](\ww, \vv) \! &= \! \! \intdom \! \! \inprods{\big( \diffu(\uu, \nabla \uu) \ww + \diffu(\uu, \nabla  \uu) (\nabla \ww + \liftR(\jump{\ww}) + \liftL(\bldg \cdot \jump{\ww})) \big)\!}{\!(\gradh  \vv + \liftR(\jump{\vv}) + \liftL(\bldg \cdot \jump{\vv})) } \dx \\
&\quad +  \sume \inted \frac{\pen}{\he} \inprods{\jump{\ww}}{\jump{\vv}} \ds.
\end{align*}
Now, let us consider $\difffrm'[\uu](\ww, \psi)$ for the smooth exact solution of the dual problem, $\psi \in \Hm{2}(\dom)$; one might write for BR1, BR2 and SIPG,
\begin{align*}
\difffrm'[\uu](\vv, \psi) &= \intdom \inprods{\big( \diffu(\uu, \nabla  \uu) \vv + \diffz(\uu, \nabla \uu) (\nabla \vv + \liftR(\jump{\vv})) \big)}{\gradh  \psi} \dx \\
&= \intdom \inprods{ \diffu(\uu, \nabla  \uu) \vv }{\gradh  \psi} + \inprods{( \diffz(\uu, \nabla \uu) \gradh  \psi)}{  \gradh  \vv } \dx  + \intdom \inprods{( \diffz(\uu, \nabla  \uu) \gradh  \psi)}{ \liftR(\jump{\vv})} \dx \\
& = \intdom {\big(\diffu(\uu, \nabla  \uu) \cdot \gradh  \psi  - \divh (\diffz(\uu, \nabla \uu) \gradh  \psi)\big)}{\vv} \dx  \\
& \quad  +  \sume \inted \inprods{\avg{(I - \galproj)(\diffz(\uu, \nabla \uu) \nabla \psi)}}{\jump{\vv}} \ds \\
& = \intdom {(\uu - \uh)}{\vv} \dx  +  \dualcnst(\uu, \vv, \psi)
\end{align*}
for all $\vv \in \Vh$, with the following definition of the consistency error for the dual problem \eqref{Eq::Adjoint}
\begin{equation}
\label{eq:consErrDual}
\dualcnst(\uu, \vv, \psi) := \sume \inted \inprods{\avg{(I - \galproj)(\diff_z(\uu, \nabla \uu) \nabla \psi)}}{\jump{\vv}} \ds. 
\end{equation}
Similarly, for the LDG formulation one can show
\begin{equation*}
\dualcnst(\uu, \vv, \psi) = \sume \inted  \inprods{ \avg{(I - \galproj)(\diff_z(\uu, \nabla \uu) \nabla \psi)} }{\jump{\vv}} \ds  -\sumei \inted  \inprods{ \bldg \jump{(I - \galproj)(\diff_z(\uu, \nabla \uu) \nabla \psi)}}{\jump{\vv}} \ds. 
\end{equation*}
Presence of a non-zero consistency error for the dual problem reveals the dual inconsistency of the proposed scheme, but as we are going to show that one can obtain the \emph{asymptotic dual consistency} of the scheme as proposed in \cite{lu2005posteriori}; the quasilinear form $\difffrm$ is called  asymptotic dual consistent with the target functional $\tarfrm: \Real \to \Real$ if the following holds
\begin{equation}
\lim_{\h \to 0}  \Vert \difffrm'[\uu](\cdot, \psi) - \tarfrm'[\uu](\cdot) \vert \Vert_{\Vhd} = 0,
\end{equation} 
where $\uu$ and $\psi$ are the exact solutions of the primal and dual problems,  \eqref{Eq::origPDE} and \eqref{Eq::Adjoint} respectively.
Here $\tarfrm'[\uu]$ is the Fr\'echet derivative of  $\tarfrm$, which in our analysis and according to \eqref{Eq::Adjoint} is, 
\begin{equation}
\label{Eq::target}
\tarfrm(\uu) = \sumk \intk \frac{1}{2}(\uu - \uh)^2 \dx, \qquad \tarfrm'[\uu](\ww) = \sumk \intk (\uu - \uh) \ww \dx, \qquad \forall \uu , \ww\in \Vh.
\end{equation} 
As we will prove later in  section~\ref{Subsec::Ltwo},  such an asymptotic adjoint consistency leads to the optimal convergence rate in $\Ltwo$ norm. This property has been already investigated e.g., in \cite{oliver2009analysis} by numerical tests. Let us present the following lemma for the adjoint consistency error
\begin{lemma}
\label{Lemm::Adjoint_Consst}
Assume that the exact solution of the adjoint problem \eqref{Eq::Adjoint} has elliptic regularity, i.e., \eqref{Eq::regularity} holds. Then, there exists $\Cacon > 0$, independent of $\h$ but dependent on  $\poldeg$, such that
\ifdetail\mrg{\mzdet{$\Wmp{\infty}{1}(\dom, \triag)$ same as $\Wmp{\infty}{1}(\dom)$}}\fi
\begin{equation}
\dualcnst(\uu, \vv, \psi)  \leq \Cacon h \Vert \diff_z( \uu, \nabla \uu) \Vert_{\Wmp{\infty}{1}(\dom)} \Vert \psi \Vert_{\Hm{2}(\dom)} \ennorm{\vv},
\end{equation}
for all $\vv \in \Vh$.
\end{lemma}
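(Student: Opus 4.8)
The plan is to exploit that $\dualcnst(\uu,\vv,\psi)$ has exactly the same edge-sum structure as the primal consistency error $\prmcnst$ of \eqref{eq:consErrPrimal}, with the single field $\diff(\uu,\nabla\uu)$ replaced by the vector field $\bm{g} := \diffz(\uu,\nabla\uu)\nabla\psi$. Thus the estimate is a direct analogue of Lemma~\ref{Lem::Cnsst} (i.e.\ \cite[Lemma 5.2]{bustinza2004local}), and I would reprove the key edge bound and then convert the resulting $\Hm{1}(\dom,\triag)$-norm of $\bm{g}$ into the data on the right. First I record the regularity: since $\diffz(\uu,\nabla\uu)\in\Wmp{\infty}{1}(\dom)$ and $\psi\in\Hm{2}(\dom)$, the Leibniz rule for the product of a $\Wmp{\infty}{1}$ factor with an $\Hm{1}$ factor gives $\bm{g}\in\Hm{1}(\dom)$ together with
\begin{equation*}
\Vert \bm{g}\Vert_{\Hm{1}(\dom)} \leq C\,\Vert \diffz(\uu,\nabla\uu)\Vert_{\Wmp{\infty}{1}(\dom)}\,\Vert\psi\Vert_{\Hm{2}(\dom)}.
\end{equation*}
As noted just before the statement, $\bm{g}\in H(\mathrm{div};\dom)$ so $\jump{\bm{g}}=0$ on interior edges; this is what lets the argument run as in the primal case.

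Next I estimate the edge integrals. Applying the Cauchy--Schwarz inequality on each $\face\in\sklt$ bounds $\vert\dualcnst(\uu,\vv,\psi)\vert$ by $\sume \Vert\avg{(I-\galproj)\bm{g}}\Vert_{\Ltwo(\face)}\,\Vert\jump{\vv}\Vert_{\Ltwo(\face)}$. For the first factor I split $(I-\galproj)\bm{g}=(I-\galproj)(\bm{g}-\bm{g}_{\hk})$ using that $\galproj$ reproduces $\bm{g}_{\hk}\in\Sigsp$; the term $\Vert\bm{g}-\bm{g}_{\hk}\Vert_{\Ltwo(\face)}$ is controlled by Lemma~\ref{Lem::hp_approx}(i) with $\smth=1$, $l=0$ (so $\mu=1$), giving the factor $\hk^{1/2}\Vert\bm{g}\Vert_{\Hm{1}(\telem)}$, while $\Vert\galproj(\bm{g}-\bm{g}_{\hk})\Vert_{\Ltwo(\face)}$ is handled by a discrete trace inequality on $\Sigsp$ together with the $\Ltwo$-stability of $\galproj$ and the element approximation estimate, yielding the same order $\hk^{1/2}\Vert\bm{g}\Vert_{\Hm{1}(\telem)}$. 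Hence $\Vert\avg{(I-\galproj)\bm{g}}\Vert_{\Ltwo(\face)}\leq C\,h_\telem^{1/2}\Vert\bm{g}\Vert_{\Hm{1}}$ on the elements adjacent to $\face$. For the second factor I use the lower bound in Lemma~\ref{Lemma::lift_jump} to write $\Vert\jump{\vv}\Vert_{\Ltwo(\face)}\leq \Cr^{-1}\he^{1/2}\Vert\liftRe(\jump{\vv})\Vert_{\Ltwo(\dom)}$.

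Finally I assemble everything. Bounded variation and shape-regularity give $h_\telem^{1/2}\he^{1/2}\leq C\,h$, so a Cauchy--Schwarz over the edges yields
\begin{equation*}
\vert\dualcnst(\uu,\vv,\psi)\vert \leq C\,h\,\Big(\sume \Vert\bm{g}\Vert^2_{\Hm{1}(\text{elements adjacent to }\face)}\Big)^{1/2}\Big(\sume\Vert\liftRe(\jump{\vv})\Vert^2_{\Ltwo(\dom)}\Big)^{1/2}.
\end{equation*}
The finite-overlap property (each element meets at most $\non$ edge patches) turns the first sum into $C\Vert\bm{g}\Vert_{\Hm{1}(\dom,\triag)}$, and the second equals $\liftnorm{\vv}\leq\ennorm{\vv}$ by \eqref{Eq::lift_norm} and \eqref{Eq::energynorm}. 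Combining with the product estimate for $\Vert\bm{g}\Vert_{\Hm{1}(\dom)}$ from the first paragraph delivers the claimed bound with $\Cacon$ depending only on $\poldeg$, the mesh constants, and the lifting constants. The two places demanding care — hence the main obstacles — are the trace estimate for the projection error $(I-\galproj)\bm{g}$ (specifically the $\galproj$-term, which needs the inverse trace inequality combined with $\Ltwo$-stability rather than a direct approximation bound) and the Leibniz estimate $\Vert\diffz\nabla\psi\Vert_{\Hm{1}}\lesssim\Vert\diffz\Vert_{\Wmp{\infty}{1}}\Vert\psi\Vert_{\Hm{2}}$, which is exactly where the $\Wmp{\infty}{1}$-regularity of $\diffz$ and the $\Hm{2}$-regularity \eqref{Eq::regularity} of $\psi$ enter.
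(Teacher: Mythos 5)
Your proposal is correct and follows essentially the same route as the paper's proof: edgewise Cauchy--Schwarz, a $\hk^{1/2}$ trace approximation bound for $(I-\galproj)(\diffz(\uu,\nabla\uu)\nabla\psi)$, the lower bound of Lemma~\ref{Lemma::lift_jump} to convert $\Vert\jump{\vv}\Vert_{\Ltwo(\face)}$ into $\he^{1/2}\Vert\liftRe(\jump{\vv})\Vert_{\Ltwo(\dom)}$, and finally the Leibniz estimate $\Vert\diffz\nabla\psi\Vert_{\Hm{1}(\dom,\triag)}\lesssim\Vert\diffz\Vert_{\Wmp{\infty}{1}(\dom)}\Vert\psi\Vert_{\Hm{2}(\dom)}$. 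The one genuine difference is a refinement of a sub-step: you reprove the trace bound for the projection error via polynomial reproduction, a discrete trace inequality, and the elementwise $\Ltwo$-stability of $\galproj$, whereas the paper simply invokes its remark at the end of \S\ref{Secsec::approx} that the Galerkin $[\Ltwo]^2$ projection satisfies the approximation properties of Lemma~\ref{Lem::hp_approx}; your version is more self-contained and makes explicit why Lemma~\ref{Lem::hp_approx} applies to $\galproj$. One small omission to flag: the lemma also covers LDG, for which $\dualcnst$ carries the extra term $-\sumei\inted\inprods{\bldg\jump{(I-\galproj)(\diffz(\uu,\nabla\uu)\nabla\psi)}}{\jump{\vv}}\ds$; the paper dispatches it in one line using $\Vert\bldg\Vert_{[\Lp{\infty}(\sklti)]^2}\leq C$, and since your elementwise trace estimate controls the jump of the projection error exactly as it controls the average, the extension is immediate --- but it should be stated.
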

\ifdetail\mrg{\mzdet{In \cite{oliver2009analysis} the assumption is mentioned differently}}\fi
\begin{proof}
Using assumption~\ref{Ass::3}  and Lemmas~\ref{Lemma::lift_jump} and \ref{Lem::hp_approx}, for BR2 and SIPG we have
\begin{align*}
\vert \dualcnst (\uu, \vv, \psi)  \vert &=  \Big\vert \sume \inted \inprods{\avg{(I - \galproj)(\diffz(\uu, \nabla \uu) \nabla \psi)}}{\jump{\vv}} \ds    \Big\vert \\
& \leq  \Big( \sume  \Vert (I - \galproj)(\diffz(\uu, \nabla \uu) \nabla \psi) \Vert^2_{\Ltwo(\face)}    \Big)^{1/2}  \Big(\sume  \Vert \jump{\vv} \Vert^2_{\Ltwo(\face)}    \Big)^{1/2} \\
& \leq \Big( \CA \sumk  \hk \Vert (\diffz(\uu, \nabla \uu) \nabla \psi) \Vert^2_{\Hm{1}(\telem)}    \Big)^{1/2}  \Big( \Cr^{-1} \sume  \he \Vert \liftRe(\jump{\vv}) \Vert^2_{\Ltwo(\dom)}    \Big)^{1/2}  \\
& \leq C  \h \Vert (\diff_z(\uu, \nabla \uu) \nabla \psi) \Vert_{\Hm{1}(\dom, \triag)}   \liftnorm{\vv}.
\end{align*}

For LDG there exists additional term which can be bounded similarly as
\begin{align*}
\Big \vert \sumei \inted  \inprods{ \bldg \jump{(I - \galproj)(\diffz(\uu, \nabla \uu) \nabla \psi)}}{\jump{\vv}} \ds \Big \vert &\leq C \h \Vert \bldg \Vert_{[\Lp{\infty}(\sklti)]^2} \Vert \diff_z(\uu, \nabla \uu) \nabla \psi \Vert_{\Hm{1}(\dom, \triag)} \liftnorm{\vv}. 
\end{align*}
Using the boundedness of parameter $\bldg$ and the fact that
\begin{equation*}
\Vert \diffz(\uu, \nabla \uu) \nabla \psi\Vert_{\Hm{1}(\dom, \triag)} \leq \Vert \diffz(\uu, \nabla \uu)  \Vert_{\Wmp{\infty}{1}(\dom, \triag)} \Vert \psi \Vert_{\Hm{2}(\dom, \triag)} \leq C,
\end{equation*}
completes the proof of the lemma with sufficiently large $\Cacon$.
\end{proof}

Using the result of the Lemma~\ref{Lemm::Adjoint_Consst}, smoothness of $\diff_z$ and elliptic regularity of $\psi$ yields
\begin{equation}
\lim_{\h \to 0} \Vert \dualcnst(\uu, \cdot, \psi) \Vert_{\Vhd} = 0, 
\end{equation}
which shows the asymptotic adjoint consistency of the proposed formulations.
\section{Stability}
\label{Sec::stab}
In this section we prove the stability of the approximated solution of \eqref{Eq::primal}. Let us remark that in this section we do not assume neither the monotonicity nor the Lipschitz continuity of the diffusion operator. The only additional assumption we need  is mentioned as the following remark:  
\begin{remark}
Based on the examples in section~\ref{Sec::Int}, and our main interest to interpret these problems as nonlinear diffusion phenomenon, it looks natural to assume the following 
\begin{equation}
\label{Eq::nodiff}
\diff(x, \eta, 0) \equiv 0, \qquad \forall  x \in \Real^2, \eta \in \Real,
\end{equation}
which tells that there is no diffusive behavior in the absence of the gradient.

Using \eqref{Eq::nodiff} and the Taylor's expansion \eqref{Eq::Taylor},  as well as assumption~\ref{Ass::3} in  form of \eqref{Eq::ass_stab_bnd}, one may write
\begin{equation}
\label{Eq::PosDef}
\inprods{\diff(x, \eta, \bm{\xi})}{\bm{\xi}} = \inprods{\difft(x, \eta, \bm{\xi})}{\bm{\xi}} \geq \lambda \vert \bm{\xi} \vert^2, \qquad \forall  x \in \Real^2, \eta \in \Real, \bm{\xi}\in \Real^2.
\end{equation}
We will exploit this result in the rest of this section.
\end{remark}

It is well-known that the BR1 method is only weakly-stable (cf. \cite{arnold2002unified}), so we only present the stability result for BR2, SIPG and LDG  as the following lemma:
\begin{lemma}
\label{Lem::Stab}
Let us assume $\uh \in \Vsp$ is the solution of the primal formulation \eqref{Eq::primal}. Then for LDG, BR2 and SIPG methods the following stability result holds
\begin{equation}
\label{Eq::Stab}
\ennorm{\uh} \leq \dfrac{\CF}{\Cco} \Vert f \Vert_{\Ltwo(\dom)}.
\end{equation}
Here, $\CF$ is the continuity constant of linear form $\rhsfrm(\cdot)$, that is
\begin{equation}
\label{Eq::CF}
\vert \rhsfrm(\vv) \vert \leq \CF \Vert f \Vert_{\Ltwo(\dom)} \ennorm{\vv}, \qquad \forall \vv \in \Vh,
\end{equation}
while $\Cco > 0$ is the following coercivity constant of the operator $\difffrm$ as
\begin{equation}
\label{Eq::coercivity}
 \Cco \ennorm{\vh}^2  \leq \difffrm(\vh, \vh), \qquad \forall \vh \in \Vsp.
\end{equation}
\end{lemma}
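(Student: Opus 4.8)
The plan is to reduce the bound \eqref{Eq::Stab} to the two ingredients named in the statement — continuity of $\rhsfrm$ and coercivity of $\difffrm$ — and then to supply the coercivity estimate, which is the only substantial part. Granting \eqref{Eq::CF} and \eqref{Eq::coercivity}, the bound is immediate: testing the primal formulation \eqref{Eq::primal} with $\vh = \uh$ gives $\difffrm(\uh,\uh) = \rhsfrm(\uh)$, so
\[
\Cco \ennorm{\uh}^2 \leq \difffrm(\uh,\uh) = \rhsfrm(\uh) \leq \CF \Vert f \Vert_{\Ltwo(\dom)} \ennorm{\uh},
\]
and dividing by $\ennorm{\uh}$ (the case $\ennorm{\uh}=0$ being trivial) yields \eqref{Eq::Stab}. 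The constant $\CF$ comes at once from Cauchy--Schwarz and the Poincar\'e-type bound \eqref{Eq::energynrm_eq}, since $\vert\rhsfrm(\vv)\vert = \vert\intdom f\vv\dx\vert \leq \Vert f\Vert_{\Ltwo(\dom)}\Vert\vv\Vert_{\Ltwo(\dom)} \leq \sqrt{\Cen}\,\Vert f\Vert_{\Ltwo(\dom)}\ennorm{\vv}$, so one may take $\CF = \sqrt{\Cen}$.

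The real work is the coercivity \eqref{Eq::coercivity} with an explicit $\Cco>0$, for which I would invoke the positive-definiteness \eqref{Eq::PosDef} together with the relaxed bound \eqref{Eq::ass_stab_bnd}, treating the three methods in parallel. For BR2 and SIPG the key manipulation is to substitute $\gradh\vh = \teth(\vh) - \liftR(\jump{\vh})$ in the volume term of \eqref{Eq::sipg}, \eqref{Eq::Brtwo}, so that
\[
\difffrm(\vh,\vh) = \intdom\inprods{\diff(\vh,\teth)}{\teth}\dx + \intdom\inprods{\diff(\vh,\gradh\vh)-\diff(\vh,\teth)}{\liftR(\jump{\vh})}\dx + F(\vh,\vh),
\]
where $\teth=\teth(\vh)$ and $F$ is the respective penalty or stabilization term. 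The first integral is bounded below by $\lambda\Vert\teth\Vert^2_{\Ltwo(\dom)}$ through \eqref{Eq::PosDef}. For the second, the Taylor identity \eqref{Eq::Taylor} (with only the third argument varying) gives $\diff(\vh,\gradh\vh)-\diff(\vh,\teth) = -\difft\,\liftR(\jump{\vh})$, whose eigenvalues are bounded by $\Lambda$ via \eqref{Eq::ass_stab_bnd}, so this term is $\geq -\Lambda\Vert\liftR(\jump{\vh})\Vert^2_{\Ltwo(\dom)}$.

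It then remains to recover $\ennorm{\vh}^2 = \seminorm{\vh}^2_{1,\dom} + \liftnorm{\vh}^2$ out of $\lambda\Vert\teth\Vert^2$, the negative lift contribution, and $F$. I would split $\Vert\teth\Vert^2 = \Vert\gradh\vh+\liftR(\jump{\vh})\Vert^2$ with Young's inequality to extract a positive multiple of $\seminorm{\vh}^2_{1,\dom}$, at the cost of a further negative multiple of $\Vert\liftR(\jump{\vh})\Vert^2$, and then compare $\Vert\liftR(\jump{\vh})\Vert^2$, $\liftnorm{\vh}^2$ and $\sume\he^{-1}\Vert\jump{\vh}\Vert^2_{\Ltwo(\face)}$ using \eqref{Eq::liftcauchy} and Lemma~\ref{Lemma::lift_jump}. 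For BR2 the stabilization $F=\sume\eta_\face\intdom\inprods{\diff(\vh,\liftRe(\jump{\vh}))}{\liftRe(\jump{\vh})}\dx \geq \lambda\sume\eta_\face\Vert\liftRe(\jump{\vh})\Vert^2_{\Ltwo(\dom)}$ (applying \eqref{Eq::PosDef} edgewise) dominates $\liftnorm{\vh}^2$ directly, whereas for SIPG the penalty $F=\sume\frac{\pen}{\he}\Vert\jump{\vh}\Vert^2_{\Ltwo(\face)}$ does so through the upper bound in Lemma~\ref{Lemma::lift_jump}. The LDG case is cleaner, since $\difffrm(\vh,\vh) \geq \lambda\Vert\teth(\vh)\Vert^2_{\Ltwo(\dom)} + \sume\frac{\pen}{\he}\Vert\jump{\vh}\Vert^2_{\Ltwo(\face)}$ already carries the full lifted gradient.

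The main obstacle — and the origin of the explicit stability conditions advertised in the introduction — lies precisely in this last bookkeeping step: the negative lift terms produced by $\Lambda$ and by the Young splitting must be absorbed into $F$, which forces a lower bound on the parameter ($\pen$ for SIPG, $\eta_\face$ for BR2) in terms of $\lambda$, $\Lambda$, $\non$, and the lifting constants $\Cr,\CR$. One must then check that a strictly positive remainder survives simultaneously on $\seminorm{\vh}^2_{1,\dom}$ and on $\liftnorm{\vh}^2$, so that a genuine $\Cco>0$ is produced; this is where the argument for SIPG is most delicate, since the gradient coefficient and the penalty threshold are coupled through the Young parameter.
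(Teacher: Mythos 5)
Your proposal is correct and takes essentially the same route as the paper: after the standard reduction via $\vh=\uh$, your substitution $\gradh\vh = \teth(\vh)-\liftR(\jump{\vh})$ reproduces exactly the paper's decomposition $\difffrm_{\sipg,\brtwo} = \difffrm_{\brone} + \Tone + \Ttwo$ in \eqref{Eq::decompos}, followed by the same Taylor/$\Lambda$ bound on the lift term, Young splitting of $\Vert\teth\Vert^2$, and absorption into the penalty, leading to the thresholds \eqref{Eq::stab_br_sipg}. Your explicit constant $\CF=\sqrt{\Cen}$ is a small addition the paper leaves implicit, and your treatment of LDG (no $\Tone$ term, so any $\pen>0$ suffices after choosing the Young parameter) matches the paper's as well.
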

\begin{proof}
For different formulations, we  insert the test function $\vh = \uh$ and investigate if \eqref{Eq::coercivity} actually holds.
Using the fact that $\difffrm(\uh, \uh) = \rhsfrm(\uh)$, the rest of the proof is complete by the following line
\begin{equation}
 \ennorm{\uh}^2 \leq \dfrac{1}{\Cco} \rhsfrm(\uh) \leq \dfrac{\CF}{\Cco}  \Vert f \Vert_{\Ltwo(\dom)}   \ennorm{\uh}.
\end{equation}
So the only missing step is the coercivity proof, for which we check $\difffrm(\uh, \uh)$ for different formulations:
\begin{enumerate}[label= ({\roman*})]
\item LDG: From \eqref{Eq::LDG} and \eqref{Eq::teteq} one has
\begin{align*}
\difffrm(\uh, \uh) & = \intdom \inprods{  \diff \big(\uh,  \teth \big)}{ \teth  } \dx +\sume \dfrac{\pen}{\he}\inted \inprods{\jump{\uh}}{\jump{\uh}} \ds.		
\end{align*}
From the property \eqref{Eq::nodiff} we have
\begin{align}
\intdom \inprods{  \diff \big(\uh,  \teth \big)}{ \teth  } \dx &\geq  \lambda \Vert   \gradh \uh + \liftR(\jump{\uh}) + \liftL(\inprods{\bldg}{\jump{\uh}}) \Vert_{\Ltwo(\dom)}^2 \nonumber \\
& \geq  \lambda \Big[(1-\delta) \Vert   \gradh \uh \Vert^2_{\Ltwo(\dom)} + (1-\frac{1}{\delta}) \Vert \liftR(\jump{\uh}) + \liftL(\inprods{\bldg}{ \jump{\uh}}) \Vert_{\Ltwo(\dom)}^2 \Big]
\end{align}
using Young's inequality with $\delta \in (0, 1)$. Now using \eqref{Eq::liftnorm_bound} and the boundedness of $\bldg$, one has
\begin{align}
\intdom \inprods{  \diff \big(\uh,  \teth \big)}{ \teth  } \dx & \geq  \lambda \Big[(1-\delta) \Vert   \gradh \uh \Vert^2_{\Ltwo(\dom)} + C (1-\frac{1}{\delta}) \liftnorm{\uh}^2 \Big],
\end{align}
for some constant $C$. Then \eqref{Eq::lift_jump} gives the following 
\begin{align}
\difffrm(\uh, \uh) \geq \lambda (1-\delta) \Vert   \gradh \uh \Vert^2_{\Ltwo(\dom)} +  \big( {\dfrac{\pen}{\CR^2}} + \lambda C (1-\frac{1}{\delta}) \big) \liftnorm{\uh}^2,
\end{align}
which indicates the condition $\pen > 0$ as the coercivity requirement of LDG formulation. This coincides with the result for the Poisson problems (cf. \cite{arnold2002unified}) and the version of LDG discussed in \cite{may2016spacetime}.

\item BR2, SIPG: We combine the proof for these two methods by writing them as a variation of BR1 method. This treatment will be used once more in section \ref{Subsec::strong_mono}. Using \eqref{Eq::Brone}, \eqref{Eq::Brtwo} and \eqref{Eq::sipg}, one may write the following decomposition of primal formulation; for all $\vv, \ww \in \Vh $
\begin{equation}
\label{Eq::decompos}
\difffrm_{\sipg, \brtwo}(\vv, \ww) = \difffrm_\brone(\vv, \ww) + \Tone(\vv, \ww) + \Ttwo_{\sipg, \brtwo}(\vv, \ww),
\end{equation}
where $\difffrm_\brone$ is the primal formulation of BR1 scheme and the next two terms are defined as
\begin{align}
\Tone(\vv,\ww) &= -\intdom \inprods{\Big(\diff(\vv, \teth(\vv)) - \diff(\vv, \gradh \vv) \Big)}{\liftR (\jump{\ww})} \dx, \label{Eq::T_one} \\
\Ttwo_\sipg(\ww, \vv) &= \sume \dfrac{\pen}{\he}  \inted  \inprods{\jump{\vv}}{\jump{ \ww}} \ds, \label{Eq::T_two_sipg}\\
\Ttwo_\brtwo(\ww, \vv) &= \sume \eta_\face \intdom  \inprods{\diff(\vv,\liftR^\face(\jump{\vv}))}{\liftR^\face(\jump{ \ww})} \dx \label{Eq::T_two_brtwo}.
\end{align}
Using \eqref{Eq::PosDef} one can easily show the positive semi-definiteness of $\difffrm_{\text{BR1}}(\uh, \uh)$ as \eqref{Eq::Brone}.
For $\Tone(\uh, \uh) $ using the Taylor's expansion \eqref{Eq::Taylor} we have
\begin{align*}
\Tone(\uh, \uh)   & = - \intdom \inprods{\difft(\uh, \gradh \uh, \teth)\liftR(\jump{\uh})}{\liftR(\jump{\uh})} \dx,
\end{align*}
and for the remaining term $\Ttwo_{\brtwo, \sipg}$ one can obtain
\begin{align*}
\Ttwo_{\sipg}(\uh, \uh) &= \sume \dfrac{\pen}{\he} \Vert   \jump{\uh} \Vert^2_{\Ltwo(\face)} \geq    \sume \dfrac{\pen}{{\CR^2}} \Vert  \liftR^\face(\jump{\uh}) \Vert^2_{\Ltwo(\dom)}, \\
\Ttwo_{\brtwo}(\uh, \uh) &\geq \lambda \eta_\face  \sume  \Vert \liftR^\face(\jump{\uh})) \Vert^2_{\Ltwo(\dom)}.
\end{align*}
Here we have used~\eqref{Eq::lift_jump} and~\eqref{Eq::PosDef} in the first  and second equation, respectively.
Henceforth, we adopt the  following notation for a constant $\CT > 0$
\begin{equation}
\label{Eq::CT}
\sipg: \CT := \min_{\face \in \sklt} \{  \pen/{\CR^2} \}, \qquad \brtwo: \CT := \min_{\face \in \sklt} \{ \lambda  \eta_\face\}.
\end{equation}
Hence, using \eqref{Eq::decompos}, \eqref{Eq::PosDef} and \eqref{Eq::CT}, one might write 
\begin{align*}
\difffrm_{\sipg, \brtwo}(\uh, \uh) &\geq \intdom \inprods{ \diff(\uh, \teth)}{\teth} \dx - \intdom \inprods{\difft(\uh,   \gradh \uh, \teth)\liftR(\jump{\uh})}{\liftR(\jump{\uh})} \dx \\
& \quad + \CT \sume  \Vert \liftR^\face(\jump{\uh})) \Vert^2_{\Ltwo(\dom)}  \\
&\geq \lambda \Vert \teth(\uh)) \Vert^2_{\Ltwo(\dom)} - \Lambda \Vert \liftR(\jump{\uh}) \Vert^2_{\Ltwo(\dom)}  + \CT  \liftnorm{\uh}^2.
\end{align*}
Applying Young inequality with $0 < \delta < 1$ and \eqref{Eq::liftcauchy}, we arrive at
\ifdetail 
\mzdet{
\begin{align*}
\theta^2 =  (\nabla + r)^2 = \nabla^2 + r^2 + 2 \nabla r \geq \nabla^2 + r^2 + 2 \nabla r 
\end{align*}
}\fi
\begin{align*}
\difffrm_{\sipg, \brtwo}(\uh, \uh) &\geq \lambda(1 - \delta) \Vert \gradh \uh \Vert^2_{\Ltwo(\dom)} - \non \big(\Lambda + \lambda(\dfrac{1}{\delta} - 1)\big) \liftnorm{\uh}^2 
+ \CT \liftnorm{\uh}^2 
\end{align*}
This leads to the following criterion
$
\non (\Lambda + \lambda(\dfrac{1}{\delta} - 1)) \leq \CT$, and since $\delta $ can be arbitrary chosen close to $1$, the margin for stability is $\CT > \non \Lambda $ which can be written separately for BR2 and SIPG as 
\ifdetail\mrg{\mzdet{I think we should have strict inequalities here. 
Yes, since we do not want $\delta$ goes to zero}}\fi
\begin{align}
\label{Eq::stab_br_sipg}
\sipg: \pen > {\CR^2} \Lambda \non, \qquad \brtwo:  \eta_\face >  \non \frac{\Lambda}{\lambda}.
\end{align}
This shows that by choosing sufficiently large penalty parameter, as \eqref{Eq::stab_br_sipg} suggests, one can ensure the stability of the solution of SIPG and BR2 formulations. \ifdetail\mgg{Maybe we should also highlight these explicit estimates in the introduction}\fi
\end{enumerate}
This completes the proof of coercivity as well the proof of the lemma.
\end{proof}
\begin{remark}
The result of Lemma~\ref{Lem::Stab} shows that the stability estimate is available in the degenerate case, i.e. where $\lambda = 0$, for SIPG method while for BR2 the uniform lower bound $\lambda$ should be uniformly larger than zero.
In the special case when $\diff(\uu, \nabla \uu) = a(\uu) \nabla \uu $, by the same lines of argument as in \cite{zakerzadeh2017entropy},  one can check that the stability criteria for SIPG remains unchanged while the BR2 one reduces to $\eta_\face \geq  \non$, which is consistent with the result of Poisson problem (cf. \cite{arnold2002unified}). 
\end{remark}
\section{Existence and uniqueness of the discrete solution}
\label{Sec::Exist}
In the following sections of this paper, we only consider the BR2 and SIPG formulations. The reason for doing this is the fact that our discussions on the LDG method will follow basically the arguments presented in \cite{bustinza2004local}. Moreover, the BR1 method is not so common due to non-compact stencil and instability~\cite{arnold2002unified}. 
\ifdetail
On the other hand, as we are going to analyze SIPG and BR2 by recasting them in terms of BR1 scheme augmented by additional terms according to \eqref{Eq::decompos},  one can easily reproduce the results for BR1 by ignoring the additional terms in the analysis. 
\fi

We first prove the strong monotonicity and Lipschitz continuity of the nonlinear operator corresponding to the primal formulation  in sections~\ref{Subsec::strong_mono} and \ref{Subsec::Lipsc}, respectively. Then we prove the uniqueness of the discrete solution and present a Strang type error estimate.
\subsection{Strong monotonicity}
\label{Subsec::strong_mono}
Let us start with the following lemma
\begin{lemma}
\label{Lem::Strong}
If the diffusion operator $\diff$ satisfies the strong monotonicity property as 
\eqref{Eq::Strong}, then there exists $\CSM > 0$ such that
\begin{equation}
\difffrm(\vv, \vv - \ww ) - \difffrm(\ww, \vv - \ww ) \geq \CSM \ennorm{ \vv - \ww}^2
\end{equation}
for all $\ww, \vv \in \Vh$.
\end{lemma}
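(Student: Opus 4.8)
The plan is to mimic the coercivity argument of Lemma~\ref{Lem::Stab}, replacing the single argument $\uh$ by the difference $\phi := \vv - \ww$ and the positivity bound \eqref{Eq::PosDef} by the strong monotonicity \eqref{Eq::Strong}. First I would apply the decomposition \eqref{Eq::decompos} to $\difffrm(\vv,\cdot)$ and $\difffrm(\ww,\cdot)$, tested against $\phi$, to write for SIPG and BR2
\begin{align*}
\difffrm(\vv, \phi) - \difffrm(\ww, \phi) = {}& \big[\difffrm_\brone(\vv,\phi) - \difffrm_\brone(\ww,\phi)\big] + \big[\Tone(\vv,\phi) - \Tone(\ww,\phi)\big] \\
&{} + \big[\Ttwo(\vv,\phi) - \Ttwo(\ww,\phi)\big].
\end{align*}
Since the lifted gradient $\teth(\cdot) = \gradh(\cdot) + \liftR(\jump{\cdot})$ is linear for both schemes, one has $\teth(\phi) = \teth(\vv) - \teth(\ww)$, so the BR1 part equals $\intdom \inprods{\diff(\vv,\teth(\vv)) - \diff(\ww,\teth(\ww))}{\teth(\phi)} \dx$, and the strong monotonicity \eqref{Eq::Strong}, which holds uniformly in the first argument, bounds it below by $\Csm \Vert \teth(\phi)\Vert^2_{\Ltwo(\dom)}$.

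For the stabilization I would argue as in Lemma~\ref{Lem::Stab}: for SIPG the symmetric bilinearity of \eqref{Eq::T_two_sipg} together with \eqref{Eq::lift_jump} gives $\Ttwo_\sipg(\phi,\phi) \ge (\min_{\face \in \sklt} \pen/\CR^2)\, \liftnorm{\phi}^2$, while for BR2 the term \eqref{Eq::T_two_brtwo} involves $\diff(\cdot,\liftR^\face(\jump{\cdot}))$, and using again that $\liftR^\face$ is linear, strong monotonicity yields $\Ttwo_\brtwo(\phi,\vv) - \Ttwo_\brtwo(\phi,\ww) \ge \Csm (\min_{\face \in \sklt} \eta_\face)\, \liftnorm{\phi}^2$. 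In both cases this is a nonnegative multiple of $\liftnorm{\phi}^2$, whose coefficient I abbreviate by $\CT$ as in \eqref{Eq::CT}.

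The crux is the difference of the $\Tone$ terms, namely $-\intdom \inprods{g(\vv) - g(\ww)}{\liftR(\jump{\phi})} \dx$ with $g(\cdot) := \diff(\cdot,\teth(\cdot)) - \diff(\cdot,\gradh(\cdot))$. Unlike the coercivity proof, where $\vv = \ww$ collapses this into the clean quadratic form $-\intdom \inprods{\difft\, \liftR(\jump{\uh})}{\liftR(\jump{\uh})} \dx$, here a genuine difference remains. The tempting product-rule splitting $g(\vv) - g(\ww) = \difft(\vv)\,\liftR(\jump{\phi}) + \big(\difft(\vv) - \difft(\ww)\big)\liftR(\jump{\ww})$, with $\difft(\cdot)$ evaluated along the segment joining $\gradh(\cdot)$ to $\teth(\cdot)$, must be avoided, since its second summand carries a spurious dependence on $\ww$ through $\liftR(\jump{\ww})$. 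Instead I would estimate $g(\vv) - g(\ww)$ directly through the two full state-differences $\diff(\vv,\teth(\vv)) - \diff(\ww,\teth(\ww))$ and $\diff(\vv,\gradh\vv) - \diff(\ww,\gradh\ww)$, applying the global Lipschitz bound \eqref{Eq::Lip} (equivalently the integral Taylor form \eqref{Eq::Taylor} with the eigenvalue bound $\Lambda$ and the bounded first derivatives of assumption~\ref{Ass::3}) to obtain $\Vert g(\vv) - g(\ww)\Vert_{\Ltwo(\dom)} \le \Clc\big(C\Vert \phi\Vert_{\Ltwo(\dom)} + \Vert \teth(\phi)\Vert_{\Ltwo(\dom)} + \Vert \gradh\phi\Vert_{\Ltwo(\dom)}\big)$, a bound depending only on $\phi$.

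Finally I would assemble the pieces exactly as in the stability proof: Cauchy--Schwarz and Young's inequality convert the $\Tone$ difference into a controllable multiple of $\liftnorm{\phi}^2$ together with a small multiple of $\ennorm{\phi}^2$, the function-value contribution $\Vert \phi\Vert_{\Ltwo(\dom)}$ being absorbed through the Poincar\'e-type estimate \eqref{Eq::energynrm_eq} and the lift contributions through \eqref{Eq::liftcauchy}--\eqref{Eq::liftnorm_bound}. Combining with $\Csm \Vert \teth(\phi)\Vert^2_{\Ltwo(\dom)}$ from the BR1 part and $\CT \liftnorm{\phi}^2$ from the stabilization, and invoking the Young splitting $\Vert \teth(\phi)\Vert^2_{\Ltwo(\dom)} \ge (1-\delta)\Vert \gradh\phi\Vert^2_{\Ltwo(\dom)} - (\tfrac{1}{\delta} - 1)\non \liftnorm{\phi}^2$ as in \eqref{Eq::stab_br_sipg}, yields $\difffrm(\vv,\phi) - \difffrm(\ww,\phi) \ge \CSM \ennorm{\phi}^2$ with $\CSM > 0$, provided the penalty/stabilization parameter is taken large enough (a condition of the same type as \eqref{Eq::stab_br_sipg}, now also dominating the Lipschitz constant). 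I expect the main difficulty to be precisely this last bookkeeping: guaranteeing that the negative contributions of the $\Tone$ difference, including the function-value term generated by the dependence of $\diff$ on $\uu$, are dominated by the positive $\teth(\phi)$ and penalty terms.
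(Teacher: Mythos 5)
Your proposal is correct and follows essentially the same route as the paper's proof: the decomposition \eqref{Eq::decompos}, strong monotonicity for the BR1 and stabilization parts, a direct global-Lipschitz estimate of the $\Tone$ difference through the two full state-differences (rather than a product-rule splitting), and the Young/Poincar\'e bookkeeping culminating in a largeness condition on the stabilization parameter. You also correctly identify that the argument tacitly invokes the Lipschitz bound \eqref{Eq::Lip} beyond the stated monotonicity hypothesis, exactly as the paper's own proof does.
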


Before stating the proof, let us remark the following estimate which later will be used in the analysis:
\begin{lemma}
\label{Lem::Temp_strong}
For all $\ww \in \Vh$, the following holds
\begin{equation}
\Vert \teth(\ww) \Vert_{\Ltwo(\dom)}^2 \geq  \dfrac{1}{2} \ennorm{ \ww}^2 -  \eta \liftnorm{\ww} ^2,
\end{equation}
for $\eta \geq \non+\frac{1}{2}$.
\end{lemma}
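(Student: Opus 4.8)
The plan is to expand $\teth(\ww)$ explicitly, bound the resulting cross term by Young's inequality, and then rewrite everything in terms of the energy norm and lift seminorm using their definitions together with the Cauchy--Schwarz estimate \eqref{Eq::liftcauchy}.

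First I would recall that for the two formulations relevant to this section, BR2 and SIPG, the auxiliary variable reduces to $\teth(\ww) = \gradh \ww + \liftR(\jump{\ww})$, since $\avg{\ww} - \uflux(\ww) = 0$ on every interior edge and $\uflux = 0$ on the boundary, so the $\liftL$ contribution in \eqref{Eq::teteq} vanishes. Expanding the square and estimating the cross term by Young's inequality with parameter $\delta = \frac{1}{2}$ gives
\begin{equation*}
\Vert \teth(\ww) \Vert^2_{\Ltwo(\dom)} \geq \frac{1}{2} \Vert \gradh \ww \Vert^2_{\Ltwo(\dom)} - \Vert \liftR(\jump{\ww}) \Vert^2_{\Ltwo(\dom)}.
\end{equation*}
The specific choice $\delta = \frac{1}{2}$ is what produces the factor $\frac{1}{2}$ in front of the gradient term in the final statement.

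Next I would control the global lift $\Vert \liftR(\jump{\ww}) \Vert^2_{\Ltwo(\dom)}$ in terms of the edge-wise lift seminorm. Applying \eqref{Eq::liftcauchy} with $\varphi = \jump{\ww}$ and invoking the definition \eqref{Eq::lift_norm} of $\liftnorm{\cdot}$ yields $\Vert \liftR(\jump{\ww}) \Vert^2_{\Ltwo(\dom)} \leq \non \liftnorm{\ww}^2$. Combining this with the definition of the energy norm \eqref{Eq::energynorm}, which gives the identity $\Vert \gradh \ww \Vert^2_{\Ltwo(\dom)} = \ennorm{\ww}^2 - \liftnorm{\ww}^2$, I would substitute to obtain
\begin{equation*}
\Vert \teth(\ww) \Vert^2_{\Ltwo(\dom)} \geq \frac{1}{2} \ennorm{\ww}^2 - \Big( \non + \frac{1}{2} \Big) \liftnorm{\ww}^2.
\end{equation*}

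This is exactly the claimed bound for $\eta = \non + \frac{1}{2}$. Since the right-hand side is monotonically decreasing in $\eta$ — the coefficient multiplies the nonnegative quantity $\liftnorm{\ww}^2$ — the inequality remains valid for every $\eta \geq \non + \frac{1}{2}$, which finishes the argument. I do not anticipate a genuine obstacle: the only points requiring care are the direction of Young's inequality, since a \emph{lower} bound is needed and so the cross term must be absorbed with a negative coefficient on the lift term, and the bookkeeping that converts $\Vert \gradh \ww \Vert^2_{\Ltwo(\dom)}$ into the energy norm via the lift seminorm.
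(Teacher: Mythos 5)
Your proof is correct and follows essentially the same route as the paper's: expand $\teth(\ww) = \gradh \ww + \liftR(\jump{\ww})$, absorb the cross term via Young's inequality with $\delta = \tfrac{1}{2}$, bound the global lift by $\non \liftnorm{\ww}^2$ using \eqref{Eq::liftcauchy}, and convert $\Vert \gradh \ww \Vert^2_{\Ltwo(\dom)}$ to $\ennorm{\ww}^2 - \liftnorm{\ww}^2$ via \eqref{Eq::energynorm}. Your only addition is the explicit justification that the $\liftL$ contribution in \eqref{Eq::teteq} vanishes for BR2 and SIPG, which the paper leaves implicit.
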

\begin{proof}
Using Young's inequality with $ 0 < \delta < 1$, \eqref{Eq::teteq} and \eqref{Eq::liftcauchy}, one can write
\begin{align*}
\Vert  \teth(\ww) \Vert_{\Ltwo(\dom)}^2 &= \Vert \gradh \ww \Vert_{\Ltwo(\dom)}^2 + \Vert \liftR(\jump{\ww}) \Vert_{\Ltwo(\dom)}^2 - \intdom 2 \inprods{\liftR(\jump{ \ww })}{ \gradh \ww} \dx \\
\ifdetail
& \geq (1 - \delta) \Vert \gradh \ww \Vert_{\Ltwo(\dom)}^2  + (1 - \dfrac{1}{\delta}) \Vert \liftR(\jump{\ww}) \Vert_{\Ltwo(\dom)}^2 \\
\fi
& \geq (1 - \delta) \Vert \gradh \ww \Vert_{\Ltwo(\dom)}^2 + \non (1 - \dfrac{1}{\delta})   \sume \Vert \liftRe(\jump{\ww}) \Vert_{\Ltwo(\dom)}^2.
\end{align*}
Hence, by setting $\delta = 1/2$ we arrive at
\begin{align*}
\Vert\teth(\ww) \Vert_{\Ltwo(\dom)}^2 &\geq \frac{1}{2} \Vert \gradh \ww \Vert_{\Ltwo(\telem)}^2   - {\non} \liftnorm{\ww}^2. 
\end{align*}
Using the definition of energy norm \eqref{Eq::energynorm}, one can add $\pm 1/2 \liftnorm{\ww}^2$ to the right hand side and the result is obtained.
\end{proof}

Now we are ready to present the proof of Lemma~\ref{Lem::Strong}:
\begin{proof}[of Lemma~\ref{Lem::Strong}]
Using the decomposition  \eqref{Eq::decompos}, and setting $\xih = \vh - \wh$ one gets
\begin{align}
 \difffrm(\vh, \xih ) -  \difffrm(\wh, \xih ) &= \big[ \difffrm_\brone(\vh, \xih ) -  \difffrm_\brone(\wh, \xih )\big] \nonumber \\ 
 & \quad + \big[ \Tone (\vh, \xih) - \Tone (\wh, \xih) \big]+ \big[ \Ttwo (\vh, \xih) - \Ttwo (\wh, \xih) \big] \nonumber \\
 & = \big[ \difffrm_\brone(\vh, \xih ) -  \difffrm_\brone(\wh, \xih )\big] + F_1 + F_2.
\end{align}
\ifdetail
Using the definition of $\Tone$ and $\Ttwo$ as \eqref{Eq::T_one}, \eqref{Eq::T_two_sipg} and \eqref{Eq::T_two_brtwo}, one may write
\begin{align*}
F_1 &= -\intdom \inprods{(\diff(\vh, \teth(\vh)) - \diff(\wh, \teth(\wh)) - \diff(\vh, \gradh \vh) + \diff(\wh, \gradh \wh))}{\liftR (\jump{\xih})} \dx
\end{align*}
\begin{align*}
\sipg: F_2 = \sume \inted \dfrac{\pen}{\he} \inprods{\jump{\xih}}{\jump{\xih}} \ds, \quad 
\brtwo: F_2 = \sume \eta_\face \intdom  \inprods{(\diff(\vh,\liftR^\face(\jump{\vh})) - \diff(\wh,\liftR^\face(\jump{\wh})))}{\liftR^\face(\jump{\xih})} \dx
\end{align*}
\fi 
Strong monotonicity \eqref{Eq::Strong} and \eqref{Eq::Brone} readily imply
\begin{equation}
 \difffrm_\brone(\vh, \xih ) -  \difffrm_\brone(\wh, \xih )  \geq  \Csm \Vert \teth(\vh) - \teth(\wh) \Vert^2_{\Ltwo(\dom)}.
\end{equation}
For the term $F_1$, using \eqref{Eq::Lip}, \eqref{Eq::T_one} and a Young inequality with $ \delta >0$, one has
\ifdetail 
\mz{
\begin{align*}
F_1 & = \intdom \inprods{\Big(\diff(\vh, \teth(\vh)) - \diff(\wh, \teth(\wh)) - \diff(\vh, \gradh \vh) + \diff(\wh, \gradh \wh) \Big)}{\liftR (\jump{\xih})} \dx \\
 & = \intdom \inprods{\Big(\diff(\vh, \teth(\vh)) - \diff(\wh, \teth(\wh))  \Big)}{\liftR (\jump{\xih})}  \dx+  \intdom \inprods{\Big( - \diff(\vh, \gradh \vh) + \diff(\wh, \nabla \wh) \Big)}{\liftR (\jump{\xih})} \dx\\
 & \leq \dfrac{1}{\delta} \Vert \diff(\vh, \teth(\vh)) - \diff(\wh, \teth(\wh)) \Vert^2_{\Ltwo(\dom)}	+  \dfrac{\delta}{4} \Vert \liftR (\jump{\xih}) \Vert^2_{\Ltwo(\dom)} + \dfrac{1}{\delta} \Vert \diff(\vh, \gradh \vh) - \diff(\wh, \nabla \wh) \Vert^2_{\Ltwo(\dom)} + \dfrac{\delta}{4} \Vert \liftR (\jump{\xih}) \Vert^2_{\Ltwo(\dom)}
\end{align*}
}
\fi
\begin{align*}
F_1 & \geq 	- \dfrac{1}{\delta} \Big( \Vert \diff(\vh, \teth(\vh)) - \diff(\wh, \teth(\wh)) \Vert^2_{\Ltwo(\dom)}	\Big) \\
& \quad - \dfrac{1}{\delta} \Big(   \Vert \diff(\vh, \gradh \vh) - \diff(\wh, \gradh \wh) \Vert^2_{\Ltwo(\dom)} \Big)	 - \dfrac{\delta}{2} \Vert \liftR (\jump{\xih}) \Vert^2_{\Ltwo(\dom)} \\
\ifdetail
 & \geq  - \dfrac{\Clc^2}{\delta} \Vert \xih  \Vert^2_{\Ltwo(\dom)}  - \dfrac{\Clc^2}{\delta} \Vert \teth(\xih)  \Vert^2_{\Ltwo(\dom)} -  \dfrac{\Clc^2}{\delta} \Vert \xih  \Vert^2_{\Ltwo(\dom)}  - \dfrac{\Clc^2}{\delta} \Vert \nabla \xih  \Vert^2_{\Ltwo(\dom)}	  - \dfrac{\delta}{2} \Vert \liftR (\jump{\xih}) \Vert^2_{\Ltwo(\dom)}	\\
 \fi
& \geq  - \dfrac{2 \Clc^2}{\delta} \Vert \xih  \Vert^2_{\Ltwo(\dom)}  - \dfrac{\Clc^2}{\delta} \Vert \teth(\xih)  \Vert^2_{\Ltwo(\dom)}   - \dfrac{\Clc^2}{\delta} \Vert \gradh \xih  \Vert^2_{\Ltwo(\dom)}	  - \dfrac{\delta}{2} \Vert \liftR (\jump{\xih}) \Vert^2_{\Ltwo(\dom)}
\end{align*}
\ifdetail \mrg{\mzdet{$(a-b)^2 \leq 2a^2 + 2b^2$}} \fi
Using the fact that  $ - \vert \gradh \xih  \vert^2 \geq -  2 \vert \teth(\xih)  \vert^2   - 2  \vert \liftR(\jump{\xih})  \vert^2$ and \eqref{Eq::liftcauchy}, we arrive at
\begin{align*}
F_1 & \geq  - \dfrac{2 \Clc^2}{\delta} \Vert \xih  \Vert^2_{\Ltwo(\dom)}  - \dfrac{3\Clc^2}{\delta} \Vert \teth(\xih)  \Vert^2_{\Ltwo(\dom)} 	- \non \Big(\dfrac{2 \Clc^2}{\delta}+\dfrac{\delta}{2}  \Big) \sume  \Vert \liftRe (\jump{\xih}) \Vert^2_{\Ltwo(\dom)}.
\end{align*}
For the remaining term $F_2$, using the definitions \eqref{Eq::T_two_sipg} and \eqref{Eq::T_two_brtwo}, as well \eqref{Eq::lift_jump} and \eqref{Eq::Strong}, one has
\begin{align*}
F_2 \geq  \CR^{-2} \sume  \pen \Vert \liftR^\face(\jump{\xih}) \Vert^2_{\Ltwo(\dom)}, \qquad 
F_2 \geq   \Csm \sume  \eta_\face \Vert \liftR^\face(\jump{\xih}) \Vert^2_{\Ltwo(\dom)}
\end{align*}
for SIPG and BR2 methods, respectively. Similar to \eqref{Eq::CT}, we define $\CT'$ such that, for both methods
\begin{equation}
F_2(\wh, \vh) \geq  \CT' \sume  \Vert \liftR^\face(\jump{\xih}) \Vert^2_{\Ltwo(\dom)}.
\end{equation}
Hence, combining all terms one can write
\begin{align*}
 \difffrm(\vh, \xih ) - \difffrm(\wh, \xih )  & \geq \Csm \Vert \teth(\xih) \Vert^2_{\Ltwo(\dom)} + \CT' \liftnorm{\xih}^2 \\
 & \quad - \dfrac{2 \Clc^2}{\delta} \Vert \xih  \Vert^2_{\Ltwo(\dom)}  - \dfrac{3\Clc^2}{\delta} \Vert \teth(\xih)  \Vert^2_{\Ltwo(\dom)} 	- \non \Big(\dfrac{2 \Clc^2}{\delta}+\dfrac{\delta}{2}  \Big) \liftnorm{\xih}^2 
\end{align*}
Let us set $\eta$ to be a constant larger than $\non + 1/2$; then using Lemma~\ref{Lem::Temp_strong} and \eqref{Eq::energynrm_eq}  give
\begin{align*}
 \difffrm(\vh, \xih ) - \difffrm(\wh, \xih )  & \geq \dfrac{1}{2}\Big[\Csm - \dfrac{3\Clc^2}{\delta} -   \dfrac{4 \Clc^2 \Cen}{\delta}\Big] \ennorm{\xih}^2 \\
 & \quad + \Big[ \CT' -\big(\Csm - \dfrac{3\Clc^2}{\delta}\big) \eta - \non \big(\dfrac{2 \Clc^2}{\delta}+\dfrac{\delta}{2}  \big) \Big] \liftnorm{\xih}^2. 
\end{align*}
Now  consider arbitrary $ 0 < \CSM  < \frac{\Csm}{2}$ and set $\delta$ large enough such that the following holds
\begin{align*}
\Csm - \dfrac{ \Clc^2 (3 + 4 \Cen)}{\delta} > 2 \CSM, \quad \text{or} \quad 
\delta > \dfrac{\Csm - 2 \CSM}{\Clc^2 (3 + 4 \Cen)}.
\end{align*}
The only remaining free parameter is the stabilization parameter $\CT'$ and one can choose it sufficiently large, such that
\begin{align*}
\CT'  \geq (\Csm - \dfrac{3\Clc^2}{\delta}) \eta + \non \big(\dfrac{2 \Clc^2}{\delta}+\dfrac{\delta}{2} \big).
\end{align*}
Finally we arrive at
$ \difffrm(\vh, \xih ) - \difffrm(\wh, \xih )  \geq \CSM \ennorm{\xih}^2$, 
for both BR2 and SIPG and the proof completes.
\end{proof}

\subsection{Lipschitz continuity}
\label{Subsec::Lipsc}
For the Lipschitz continuity we have the following lemma
\begin{lemma}
\label{Lem::Glob_Lip}
If the diffusion operator $\diff$ satisfies the Lipschitz continuity property as 
\eqref{Eq::Strong}, there exists $\CLC < \infty$ independent of the mesh size such that
\begin{equation}
\vert \difffrm(\zz, \ww) - \difffrm(\vv, \ww)   \vert \leq \CLC  \ennorm{\zz - \vv } \ennorm{\ww},
\end{equation}
for all $\zz, \vv, \ww \in \Vh$. 
\end{lemma}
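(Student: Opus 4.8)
The plan is to use the decomposition \eqref{Eq::decompos} to split the difference $\difffrm(\zz, \ww) - \difffrm(\vv, \ww)$ into the BR1 contribution, the lifting term $\Tone$, and the stabilization term $\Ttwo$, and to bound each piece separately using the global Lipschitz bound \eqref{Eq::Lip}, the Cauchy--Schwarz inequality, and the norm relations collected in Section~\ref{Sec::Prelim}. The key structural observation is that all three constituent forms are linear in their second (test) argument, so the difference keeps $\ww$ fixed and linear while the nonlinearity enters only through the first argument; moreover $\teth$, $\liftR$ and $\liftRe$ are linear operators, which lets me rewrite differences such as $\teth(\zz) - \teth(\vv)$ as $\teth(\zz - \vv)$.

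For the BR1 part I would write $\difffrm_\brone(\zz, \ww) - \difffrm_\brone(\vv, \ww) = \intdom \inprods{\diff(\zz, \teth(\zz)) - \diff(\vv, \teth(\vv))}{\teth(\ww)} \dx$ and apply Cauchy--Schwarz. The first factor $\Vert \diff(\zz, \teth(\zz)) - \diff(\vv, \teth(\vv)) \Vert_{\Ltwo(\dom)}$ is controlled by \eqref{Eq::Lip} through $\Clc(\Vert \zz - \vv \Vert_{\Ltwo(\dom)}^2 + \Vert \teth(\zz - \vv) \Vert_{\Ltwo(\dom)}^2)^{1/2}$. Both $\Vert \teth(\ww) \Vert_{\Ltwo(\dom)}$ and $\Vert \teth(\zz - \vv) \Vert_{\Ltwo(\dom)}$ are bounded by a constant times the energy norm, since $\teth = \gradh + \liftR\jump{\cdot}$ and \eqref{Eq::liftnorm_bound} together with the definition \eqref{Eq::energynorm} give $\Vert \teth(\cdot) \Vert_{\Ltwo(\dom)} \leq C \ennorm{\cdot}$; the leftover $\Vert \zz - \vv \Vert_{\Ltwo(\dom)}$ is absorbed by the Poincar\'e-type inequality \eqref{Eq::energynrm_eq}. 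The $\Tone$ difference is treated in the same way: after Cauchy--Schwarz against $\liftR(\jump{\ww})$, I split the integrand into the two Lipschitz differences $\diff(\zz, \teth(\zz)) - \diff(\vv, \teth(\vv))$ and $\diff(\zz, \gradh \zz) - \diff(\vv, \gradh \vv)$, each estimated by \eqref{Eq::Lip} and the energy-norm bounds above, while $\Vert \liftR(\jump{\ww}) \Vert_{\Ltwo(\dom)} \leq C \liftnorm{\ww} \leq C \ennorm{\ww}$ follows from \eqref{Eq::liftnorm_bound}.

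The SIPG stabilization term is bilinear and does not involve the diffusion operator, so $\Ttwo_\sipg(\zz, \ww) - \Ttwo_\sipg(\vv, \ww)$ reduces to $\sume (\pen/\he) \inted \inprods{\jump{\zz - \vv}}{\jump{\ww}} \ds$, which is bounded by $C\liftnorm{\zz-\vv}\,\liftnorm{\ww}$ after converting edge jumps to edge liftings via Lemma~\ref{Lemma::lift_jump}. I expect the main obstacle to be the BR2 stabilization term $\Ttwo_\brtwo$, where the diffusion operator appears nonlinearly as $\diff(\cdot,\liftRe(\jump{\cdot}))$. Applying Cauchy--Schwarz edge by edge and then \eqref{Eq::Lip}, the delicate point is controlling $\sume \Vert \zz - \vv \Vert_{\Ltwo(\omega_\face)}^2$, where $\omega_\face$ is the support of $\liftRe$, i.e.\ the elements adjacent to $\face$; a finite-overlap argument based on the bounded-variation property of the mesh bounds this sum by $\non \Vert \zz - \vv \Vert_{\Ltwo(\dom)}^2$, which avoids an uncontrolled edge-multiplicity factor. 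A final discrete Cauchy--Schwarz over the edges, together with \eqref{Eq::energynrm_eq}, then closes the estimate for this piece. Summing the three contributions produces the constant $\CLC$, depending on $\Clc$, $\Cen$, $\Cr$, $\non$ and the stabilization parameters but not on $\h$.
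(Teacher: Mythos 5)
Your proposal is correct and follows essentially the same route as the paper: the same decomposition \eqref{Eq::decompos} into the BR1 part, $\Tone$, and $\Ttwo$, the same pointwise application of \eqref{Eq::Lip} combined with linearity of $\teth$ and the norm relations \eqref{Eq::liftnorm_bound}, \eqref{Eq::energynrm_eq} and Lemma~\ref{Lemma::lift_jump}. Your finite-overlap argument for the BR2 stabilization term is exactly the point the paper disposes of by noting that $\liftRe$ vanishes outside the two elements adjacent to $\face$; you have merely made explicit the edge-multiplicity bookkeeping that the paper leaves implicit.
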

\begin{proof}
Using the decomposition  \eqref{Eq::decompos} and similar to Lemma~\ref{Lem::Strong} one gets
\begin{align}
 \difffrm(\zz, \ww ) -  \difffrm(\vv, \ww ) = \big[ \difffrm_\brone(\zz, \ww ) -  \difffrm_\brone(\vv, \ww )\big] + F_1 + F_2.
\end{align}
where 
\begin{align}
F_1 = \Tone (\zz, \ww) - \Tone (\vv, \ww), \qquad F_2 = \Ttwo (\zz, \ww) - \Ttwo (\vv, \ww).
\end{align}
It is  straightforward to show the Lipschitz continuity of $ \difffrm_\brone(\vv, \ww)$ using \eqref{Eq::Lip}. For $F_1$, noting the fact that  one may write
\begin{align*}
\vert \diff(\zz, \gradh \zz) - \diff(\vv, \gradh \vv)  - \diff(\zz, \teth(\zz)) + \diff(\vv, \teth(\vv)) \vert & \leq \Clc \Big( \vert \zz - \vv \vert^2 + \vert \teth(\zz) - \teth(\vv) \vert^2  \Big)^{1/2} \\
& \quad +  \Clc \Big( \vert \zz - \vv \vert^2 + \vert \gradh \zz - \gradh \vv \vert^2  \Big)^{1/2} \\
& \leq  C \Big( \vert \zz - \vv \vert + \vert \gradh (\zz - \vv) \vert + \liftR (\jump{\zz - \vv}) \vert \Big),
\end{align*}
and readily
\begin{align*}
F_1 &\leq C \intdom  \Big( \vert \zz - \vv \vert + \vert \gradh (\zz - \vv) \vert + \liftR (\jump{\zz - \vv}) \vert \Big) \vert \liftR (\jump{\ww}) \vert \dx  \leq \CLC \ennorm{\zz - \vv} \ennorm{\ww}.
\end{align*}
For BR2 scheme one may note, for any $\face \in \sklt$
\ifdetail
\mrg{for $a, b \geq 0$, $a^2 + b^2 \leq a^2 + b^2 + 2ab$ so $(a^2 + b^2)^{1/2} \leq a + b$}
\fi
\begin{align*}
\vert \diff(\zz,\liftRe(\jump{\zz}))- \diff(\vv,\liftRe(\jump{\vv})) \vert \leq \Clc \Big( \vert \zz - \vv \vert^2 + \vert \liftRe(\jump{\zz - \vv}) \vert^2 \Big)^{1/2} \leq \Clc \Big( \vert \zz - \vv \vert + \vert \liftRe(\jump{\zz - \vv}) \vert \Big) .
\end{align*}
On the other hand, using the fact the $\liftRe$ vanishes outside two neighbor elements,  
\ifdetail \mgg{\mzdet{why do we need that $\liftRe$ vanishes outside the two elements adjacent to $e$? otherwise I need to rewrite the sum $\sume$ to just include the neighbours to keep from having infinity integral over the domain} }
\fi
for the corresponding $F_2$ term we have,
\begin{align*}
F_2 &\leq  \sume \eta_\face \intdom \Clc \Big( \vert \zz - \vv \vert + \vert \liftRe(\jump{\zz - \vv}) \vert \Big)  \vert \liftRe(\jump{ \ww})\vert \dx  \leq \CLC \ennorm{\zz - \vv} \ennorm{\ww}.
\end{align*}
Finally, for SIPG method one can easily write
\begin{align*}
F_2 =   \sume \dfrac{\pen}{\he} \inted \inprods{\jump{\vv - \zz}}{ \jump{\ww}} \ds \leq \CLC \ennorm{\zz - \vv} \ennorm{\ww}.
\end{align*}
Combining the results for $\difffrm_\brone$, $F_1$ and $F_2$ concludes the proof. 
\end{proof}

Now we present the existence, uniqueness and stability result for the approximated solution $\uh$ as well as a  Strang type error estimate. Though we discussed the stability of the solution in section~\ref{Sec::stab}, note that the result of Lemma~\ref{Lem::exist} requires stronger condition on the diffusion operator (strong monotonicity and global Lipschitz continuity) than the more general result already discussed in section~\ref{Sec::stab}.
\begin{lemma}
\label{Lem::exist}
There exists a unique $\uh \in \Vsp$ solution of \eqref{Eq::primal}, which satisfies
\begin{equation}
\label{Eq::Strong_Stab}
\ennorm{\uh} \leq \dfrac{1}{\CSM}\Big[ \CF \Vert f \Vert_{\Ltwo(\dom)} + \Vert \difffrm(0, \cdot) \Vert_{\Vhd} \big].
\end{equation}
Moreover, the following error estimate holds
\begin{equation}
\ennorm{\uu - \uh} \leq \big( 1 + \dfrac{\CLC}{\CSM} \big) \inf_{\vh \in  \Vsp} \ennorm{\uu - \vh} + \dfrac{1}{\CSM} \sup_{0 \neq \wh \in \Vsp} \dfrac{\vert  \difffrm(\uu, \wh) - \rhsfrm(\wh)  \vert}{\ennorm{\wh}}, 
\end{equation}
where $\uu$ is the exact solution of \eqref{Eq::origPDE}.
\end{lemma}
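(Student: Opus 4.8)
The plan is to recast \eqref{Eq::primal} as the single equation $T\uh = 0$ for a strongly monotone, Lipschitz nonlinear operator $T$ on the finite-dimensional space $\Vsp$, and then to read off all three assertions from Lemmas~\ref{Lem::Strong} and~\ref{Lem::Glob_Lip} together with the continuity bound \eqref{Eq::CF}. For the existence and uniqueness, first I would note that $\ennorm{\cdot}$ is induced by an inner product $\mv{\cdot}{\cdot}$ on the complete (finite-dimensional) space $\Vsp$, and use the Riesz representation theorem to define $T\colon \Vsp\to\Vsp$ by $\mv{T\vh}{\wh} = \difffrm(\vh,\wh) - \rhsfrm(\wh)$ for all $\wh\in\Vsp$; then $\uh$ solves \eqref{Eq::primal} if and only if $T\uh=0$. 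Lemma~\ref{Lem::Strong} gives $\mv{T\vh - T\wh}{\vh-\wh} \ge \CSM\ennorm{\vh-\wh}^2$, and Lemma~\ref{Lem::Glob_Lip} gives $\ennorm{T\vh-T\wh} \le \CLC\ennorm{\vh-\wh}$. For a damping parameter $\rho>0$ I set $S_\rho\vh := \vh - \rho\,T\vh$; expanding $\ennorm{S_\rho\vh - S_\rho\wh}^2$ and inserting the two inequalities yields $\ennorm{S_\rho\vh - S_\rho\wh}^2 \le (1 - 2\rho\CSM + \rho^2\CLC^2)\ennorm{\vh-\wh}^2$. Choosing $\rho\in(0,2\CSM/\CLC^2)$ makes the factor strictly less than one, so $S_\rho$ is a contraction, and the Banach fixed-point theorem produces a unique fixed point, i.e. a unique $\uh$ solving \eqref{Eq::primal}.

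For the stability bound I would apply Lemma~\ref{Lem::Strong} with $\vv=\uh$ and $\ww=0$ to obtain $\difffrm(\uh,\uh) - \difffrm(0,\uh) \ge \CSM\ennorm{\uh}^2$. Since $\uh$ solves \eqref{Eq::primal}, $\difffrm(\uh,\uh)=\rhsfrm(\uh)$, whence $\CSM\ennorm{\uh}^2 \le \rhsfrm(\uh) - \difffrm(0,\uh)$. Bounding the first term by \eqref{Eq::CF} and the second by the definition of $\Vert\cdot\Vert_{\Vhd}$ gives $\CSM\ennorm{\uh}^2 \le (\CF\Vert f\Vert_{\Ltwo(\dom)} + \Vert\difffrm(0,\cdot)\Vert_{\Vhd})\,\ennorm{\uh}$; dividing by $\ennorm{\uh}$ yields \eqref{Eq::Strong_Stab}, the case $\uh=0$ being trivial.

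For the Strang-type estimate, for arbitrary $\vh\in\Vsp$ I would split $\ennorm{\uu-\uh} \le \ennorm{\uu-\vh} + \ennorm{\vh-\uh}$ and bound the discrete part. Setting $\wh:=\uh-\vh\in\Vsp$, Lemma~\ref{Lem::Strong} gives $\CSM\ennorm{\wh}^2 \le \difffrm(\uh,\wh) - \difffrm(\vh,\wh)$. I would then use the Galerkin identity $\difffrm(\uh,\wh)=\rhsfrm(\wh)$ and insert $\pm\difffrm(\uu,\wh)$ to rewrite the right-hand side as $[\difffrm(\uu,\wh) - \difffrm(\vh,\wh)] - [\difffrm(\uu,\wh) - \rhsfrm(\wh)]$. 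Lemma~\ref{Lem::Glob_Lip} bounds the first bracket by $\CLC\ennorm{\uu-\vh}\ennorm{\wh}$, while the second is exactly the primal residual, bounded by $\big(\sup_{0\neq\wh\in\Vsp}|\difffrm(\uu,\wh)-\rhsfrm(\wh)|/\ennorm{\wh}\big)\ennorm{\wh}$. Dividing through by $\ennorm{\wh}$, recombining with the triangle inequality, and taking the infimum over $\vh\in\Vsp$ gives the claimed estimate.

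The genuinely substantive ingredients are already packaged in Lemmas~\ref{Lem::Strong} and~\ref{Lem::Glob_Lip}, so the remaining work is largely bookkeeping. The one point deserving care is the solvability step: rather than appealing to the Browder--Minty theorem I would use the self-contained contraction argument above, since finite dimensionality makes it elementary. In the error estimate the only delicate choice is the test function $\wh=\uh-\vh$ together with the Galerkin identity $\difffrm(\uh,\wh)=\rhsfrm(\wh)$, arranged so that the residual appears precisely in the dual-norm form that the $\Ltwo$- and energy-error analysis of section~\ref{Sec::Error} later exploits.
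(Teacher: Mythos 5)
Your proof is correct, and for two of the three assertions it is genuinely more self-contained than the paper's. For existence and uniqueness the paper simply invokes a classical theorem on strongly monotone, Lipschitz-continuous operators (\cite[Theorem 3.2.23]{nevcas1983introduction} or \cite[Theorem 35.4]{vzenivsek1990nonlinear}); your Riesz-representation-plus-damped-iteration argument with $S_\rho = I - \rho T$ and $\rho \in (0, 2\CSM/\CLC^2)$ is exactly the standard (Zarantonello-type) proof of those cited results, written out in full. Note that it does not actually need finite dimensionality: it only needs $(\Vsp, \ennorm{\cdot})$ to be a Hilbert space and $\difffrm(\vh,\cdot) - \rhsfrm(\cdot)$ to be a bounded functional, the latter following from Lemma~\ref{Lem::Glob_Lip} once $\Vert \difffrm(0,\cdot)\Vert_{\Vhd} < \infty$ --- the same quantity that appears in \eqref{Eq::Strong_Stab}; on $\Vsp$ boundedness is of course automatic, as you say. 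For the stability bound the paper defers to \cite[Theorem 4.5]{bustinza2004local}, whereas your two-line derivation (monotonicity of Lemma~\ref{Lem::Strong} between $\uh$ and $0$, then \eqref{Eq::CF} and the dual norm of $\difffrm(0,\cdot)$) reproduces \eqref{Eq::Strong_Stab} with exactly the constant $1/\CSM$ claimed. The Strang-type estimate is the only part the paper proves in-house, and there your argument coincides with it: the same splitting $\er = \prer + \dser$, the same use of Lemma~\ref{Lem::Strong}, insertion of $\pm\difffrm(\uu,\cdot)$, Lemma~\ref{Lem::Glob_Lip} on one bracket, identification of the other bracket as the primal residual via the Galerkin identity, division by $\ennorm{\wh}$, and a triangle inequality. (Your choice $\wh = \uh - \vh$ is in fact the sign-consistent one: with the paper's $\dser = \vh - \uh$, the displayed inequality $\CSM \ennorm{\uh - \vh}^2 \leq \difffrm(\uh, \dser) - \difffrm(\vh, \dser)$ requires swapping the roles of the two arguments in Lemma~\ref{Lem::Strong}, a harmless slip your bookkeeping avoids.) The trade-off is the usual one: the paper's citations are shorter, while your version makes explicit where $\CSM$ and $\CLC$ enter each constant and keeps the lemma self-contained.
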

\begin{proof}
Using the strong monotonicity and Lipschitz continuity (Lemmas~\ref{Lem::Strong} and \ref{Lem::Glob_Lip}) the unique solvability of \eqref{Eq::primal} can be proved by a well-known result as \cite[Theorem 3.2.23]{nevcas1983introduction} or \cite[Theorem 35.4]{vzenivsek1990nonlinear}, also see \cite{houston2005discontinuous}. For the stability proof we refer to \cite[Theorem 4.5]{bustinza2004local} and here we only present the proof of Strang type error estimate due to its application in the rest of our analysis.

Consider the error $ \er = \uu - \uh $ and decompose it as $\er  = \prer + \dser$, where $\prer = \uu - \vh$ and $\dser = \vh - \uh$ where $\vh \in \Vsp$. From Lemma~\ref{Lem::Strong} one has
\begin{align*}
\CSM \ennorm{\uh - \vh}^2 &\leq  \difffrm(\uh, \dser) -   \difffrm( \vh, \dser)  = \big[ \difffrm(\uh, \dser) -  \difffrm(\uu, \dser) \big] +  \big[ \difffrm(\uu, \dser) -  \difffrm( \vh, \dser) \big]
\end{align*}
While the first group of terms is the consistency error, applying Lemma~\ref{Lem::Glob_Lip} gives
\begin{equation}
\label{Eq::Energyerr_tmp}
\ennorm{\uh - \vh} \leq \dfrac{\CLC}{\CSM} \ennorm{\uu - \vh} + \dfrac{1}{\CSM} \sup_{0 \neq \wh \in \Vsp} \dfrac{\vert  \difffrm(\uu, \wh) - \rhsfrm(\wh)  \vert}{\ennorm{\wh}} 
\end{equation}
Applying a triangle inequality completes the proof.
\end{proof}

\section{A priori error estimates}
\label{Sec::Error}
In this section, we provide the error estimate of the BR2 and SIPG methods illustrated in section~\ref{Sec::DG}. 
In section~\ref{Subsec::Energ_error}, we prove the optimal error estimate in the energy norm $\ennorm{\cdot}$ using the Strang type error estimate in Lemma~\ref{Lem::exist} and the asymptotic consistency result in section~\ref{Subsec::consis}. In section~\ref{Subsec::Ltwo}, we prove the optimal error estimate in $\Ltwo$ norm exploiting the result on asymptotic adjoint consistency already provided in section~\ref{Subsec::adjconsis}.

\subsection{Energy norm error estimate}
\label{Subsec::Energ_error}
Combining the result of Lemmas~\ref{Lem::exist} and \ref{Lem::Cnsst}, gives the following corollary for the error estimate in the energy norm:
\begin{cor}
\label{Cor::Energy_error}
Let     $\uh$ and $\uu$ be the solution of \eqref{Eq::primal} and the exact solution of \eqref{Eq::origPDE}, respectively. Also assume that $\uu \in \Hm{\smth}(\dom, \triag) $ and $\diff(\uu, \nabla \uu) \in \Hm{{\smth_*}}(\dom, \triag) $. Then the following holds
\begin{equation}
\ennorm{\uu - \uh}^2 \leq \Cerr \Big( \sumk \hk^{2(\mu - 1)}  \Vert \uu \Vert^2_{\Hm{\smth}(\telem)}  + \sumk \hk^{2\mu_*}  \Vert \diff( \uu, \nabla \uu) \Vert^2_{\Hm{{\smth_*}}(\telem)}  \Big)
\end{equation}
with some $\Cerr >0$, $\mu = \min(\smth, \poldeg+1)$ and $\mu_* = \min(\smth_*, \poldeg+1)$. 
\end{cor}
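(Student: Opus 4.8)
The plan is to read Corollary~\ref{Cor::Energy_error} as a direct consequence of the Strang-type bound established in Lemma~\ref{Lem::exist}, feeding into its two terms the approximation estimate of Lemma~\ref{Lem::Approx} and the consistency estimate of Lemma~\ref{Lem::Cnsst}. Concretely, since $\uu$ solves \eqref{Eq::origPDE} and $\uh$ solves \eqref{Eq::primal}, and since the diffusion operator is strongly monotone and globally Lipschitz (so that Lemmas~\ref{Lem::Strong} and \ref{Lem::Glob_Lip} apply and $\uh$ exists and is unique), the estimate
\[
\ennorm{\uu - \uh} \leq \big( 1 + \tfrac{\CLC}{\CSM} \big) \inf_{\vh \in \Vsp} \ennorm{\uu - \vh} + \tfrac{1}{\CSM} \sup_{0 \neq \wh \in \Vsp} \tfrac{\vert \difffrm(\uu, \wh) - \rhsfrm(\wh)\vert}{\ennorm{\wh}}
\]
holds, and it remains only to estimate the two terms on the right.

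For the first term I would choose the particular competitor $\vh = \projh \uu \in \Vsp$, where $\projh$ is the projection of Lemma~\ref{Lem::Approx}; this is legitimate because the exact solution satisfies $\uu \in \Hm{2}(\dom)\cap\Hm{1}_0(\dom) \subset \Vh$ under the standing regularity assumption $\uu \in \Wmp{\infty}{2}(\dom)$. Lemma~\ref{Lem::Approx} then gives
\[
\inf_{\vh \in \Vsp} \ennorm{\uu - \vh} \leq \ennorm{\uu - \projh \uu} \leq \CA' \Big( \sumk \hk^{2(\mu-1)} \seminorm{\uu}^2_{\Hm{\smth}(\telem)} \Big)^{1/2},
\]
with $\mu = \min(\poldeg+1, \smth)$. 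For the second term, the numerator is exactly the primal consistency error $\prmcnst(\uu, \wh)$ evaluated at $\wh \in \Vsp \subset \Vh$; hence Lemma~\ref{Lem::Cnsst} applies and yields
\[
\vert \difffrm(\uu, \wh) - \rhsfrm(\wh)\vert \leq \Ccon \Big( \sumk \hk^{2\mu_*} \Vert \diff(\uu, \nabla\uu)\Vert^2_{\Hm{\smth_*}(\telem)} \Big)^{1/2} \ennorm{\wh},
\]
with $\mu_* = \min(\poldeg+1, \smth_*)$, so that dividing by $\ennorm{\wh}$ and taking the supremum removes the dependence on $\wh$ altogether.

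To finish I would substitute both bounds into the Strang estimate, use $\seminorm{\uu}_{\Hm{\smth}(\telem)} \leq \Vert \uu\Vert_{\Hm{\smth}(\telem)}$ to match the stated right-hand side, and square via $(a+b)^2 \leq 2a^2 + 2b^2$ to collect the two mesh-dependent sums; the constant $\Cerr$ is then any upper bound for $2\max\{(1+\CLC/\CSM)^2 (\CA')^2,\; \Ccon^2/\CSM^2\}$. The argument is essentially bookkeeping, so there is no genuine analytic obstacle; the only points requiring care are verifying the hypotheses of the two cited lemmas (in particular that the assumed regularity places both $\uu$ and the test functions in $\Vh$, so that Lemmas~\ref{Lem::Approx} and \ref{Lem::Cnsst} may legitimately be invoked) and keeping the two distinct exponents $\mu$ and $\mu_*$ correctly attached to their respective norms $\Vert\uu\Vert_{\Hm{\smth}}$ and $\Vert\diff(\uu,\nabla\uu)\Vert_{\Hm{\smth_*}}$.
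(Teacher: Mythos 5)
Your proposal is correct and follows exactly the paper's own argument: apply the Strang-type estimate of Lemma~\ref{Lem::exist}, bound the consistency term via Lemma~\ref{Lem::Cnsst}, and choose the competitor $\vh = \projh \uu$ together with Lemma~\ref{Lem::Approx}, absorbing everything into a sufficiently large $\Cerr$. Your added bookkeeping (the $(a+b)^2 \leq 2a^2+2b^2$ step and the explicit form of $\Cerr$) merely spells out what the paper leaves implicit, so there is nothing to correct.
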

\begin{proof} Using Lemmas~\ref{Lem::exist} and \ref{Lem::Cnsst} one can easily write
\begin{equation}
\ennorm{\uu - \uh} \leq \big( 1 + \dfrac{\CLC}{\CSM} \big) \inf_{\vh \in  \Vsp} \ennorm{\uu - \vh} + \dfrac{\Ccon}{\CSM} \Big( \sumk \hk^{2{\mu_*}}  \Vert \diff( \uu, \nabla \uu) \Vert^2_{\Hm{{\smth_*}}(\telem)} \Big)^{1/2}.  
\end{equation}
Choosing $\vh = \projh \uu$ and applying the approximation result in Lemma~\ref{Lem::Approx} completes the proof with choosing a sufficiently large $\Cerr$.
\end{proof}

Using the error decomposition $\er = \prer + \dser$ (as in Lemma~\ref{Lem::exist}), we also are interested in obtaining an estimate for $\dser = \projh \uu - \uh$. By setting $\vh = \projh \uu$ in \eqref{Eq::Energyerr_tmp} and similar to the proof of Corollary~\ref{Cor::Energy_error} one might get
 \begin{equation}
\ennorm{\uh - \projh \uu} \leq \dfrac{\CLC}{\CSM} \ennorm{\uu - \projh \uu} + \dfrac{\Ccon}{\CSM} \Big( \sumk \hk^{2\mu_*}  \Vert \diff(\uu, \nabla \uu) \Vert^2_{\Hm{{\smth_*}}(\telem)}  \Big)^{1/2},
 \end{equation}
which yields, for some $\Cerrt > 0$
 \begin{equation}
 \label{Eq::Energy_dserr}
\ennorm{\uh - \projh \uu} \leq \Cerrt \Big( \sumk \hk^{2(\mu - 1)}  \Vert \uu \Vert^2_{\Hm{\smth}(\telem)}  + \sumk \hk^{2\mu_*}  \Vert \diff( \uu, \nabla \uu) \Vert^2_{\Hm{{\smth_*}}(\telem)}  \Big)^{1/2}.
 \end{equation}

Moreover, using the definition of $\sigh$ and $\teth$ as \eqref{Eq::sigeq} and \eqref{Eq::teteq}, one can prove the corresponding lemma for their error estimate 
\begin{lemma}[Theorem 5.5 in \cite{bustinza2004local}] 
Under  the same assumptions as those of Corollary~\ref{Cor::Energy_error}, there exists $\Cerrt > 0$ independent of the mesh size, such that
\begin{equation}
 \Vert \tet - \teth \Vert_{[\Ltwo(\dom)]^2} \leq \Cerrt \Big( \sumk \hk^{2(\mu - 1)}  \Vert \uu \Vert^2_{\Hm{\smth}(\telem)}  + \sumk \hk^{2\mu_*}  \Vert \diff( \uu, \nabla \uu) \Vert^2_{\Hm{{\smth_*}}(\telem)}  \Big)^{1/2},
\end{equation}
and, not necessarily with the same $\Cerrt$, 
\begin{equation}
 \Vert \sig - \sigh \Vert_{[\Ltwo(\dom)]^2} \leq \Cerrt \Big( \sumk \hk^{2(\mu - 1)}  \Vert \uu \Vert^2_{\Hm{\smth}(\telem)}  + \sumk \hk^{2\mu_*}  \Vert \diff( \uu, \nabla \uu) \Vert^2_{\Hm{{\smth_*}}(\telem)}  \Big)^{1/2},
\end{equation}
where  $\mu = \min(\smth, \poldeg+1)$ and $\mu_* = \min(\smth_*, \poldeg+1)$. 
\end{lemma}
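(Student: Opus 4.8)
The plan is to reduce both bounds to the energy-norm estimate $\ennorm{\uu - \uh}$ already furnished by Corollary~\ref{Cor::Energy_error}, so that no new approximation theory is needed beyond the $\Ltwo$-stability and approximation properties of the Galerkin projection $\galproj$. Throughout I restrict to BR2 and SIPG, for which the lifted gradient operator reads $\teth(\cdot) = \gradh(\cdot) + \liftR(\jump{\cdot})$ and is therefore \emph{linear} in its argument.

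For the estimate on $\tet - \teth$, first I would exploit the consistency identity $\teth(\uu) = \nabla \uu = \tet$, which holds because $\jump{\uu} = 0$ for the continuous exact solution, so that $\liftR(\jump{\uu}) = 0$. Linearity of $\teth$ then gives the key reduction $\tet - \teth = \teth(\uu) - \teth(\uh) = \teth(\uu - \uh)$. It remains to control this in the energy norm: splitting
\[
\Vert \teth(\uu - \uh) \Vert^2_{\Ltwo(\dom)} \le 2 \Vert \gradh(\uu-\uh) \Vert^2_{\Ltwo(\dom)} + 2 \Vert \liftR(\jump{\uu-\uh}) \Vert^2_{\Ltwo(\dom)},
\]
bounding the second summand by $2\non\,\liftnorm{\uu-\uh}^2$ via \eqref{Eq::liftcauchy}, and invoking the definition of the energy norm \eqref{Eq::energynorm} to absorb both terms into $2(1+\non)\ennorm{\uu-\uh}^2$. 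Inserting Corollary~\ref{Cor::Energy_error} yields the claimed bound for $\tet - \teth$.

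For the flux I would insert the Galerkin projection of the exact flux and split, using $\sig = \diff(\uu,\nabla\uu)$ and $\sigh = \galproj(\diff(\uh,\teth))$ from \eqref{Eq::sigeq},
\[
\sig - \sigh = (I - \galproj)\bigl( \diff(\uu,\nabla\uu) \bigr) + \galproj\bigl( \diff(\uu,\nabla\uu) - \diff(\uh,\teth) \bigr).
\]
The first term is a pure projection error, which the approximation property of $\galproj$ (Lemmas~\ref{Lem::hp_approx} and \ref{Lem::Approx}) bounds by $C\bigl( \sumk \hk^{2\mu_*} \Vert \diff(\uu,\nabla\uu) \Vert^2_{\Hm{\smth_*}(\telem)} \bigr)^{1/2}$. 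For the second term I would use that $\galproj$ is an orthogonal $\Ltwo$-projection, hence a contraction, to drop it, and then apply global Lipschitz continuity \eqref{Eq::Lip} (with $\tet = \nabla\uu$) to obtain $\Vert \diff(\uu,\tet) - \diff(\uh,\teth) \Vert_{\Ltwo(\dom)} \le \Clc\bigl( \Vert \uu-\uh \Vert_{\Ltwo(\dom)} + \Vert \tet-\teth \Vert_{\Ltwo(\dom)} \bigr)$. The Poincar\'e-type inequality \eqref{Eq::energynrm_eq} controls $\Vert \uu-\uh \Vert_{\Ltwo(\dom)}$ (noting $\uu-\uh \in \Vh$) and the first part of the proof controls $\Vert \tet-\teth \Vert_{\Ltwo(\dom)}$, both by $\ennorm{\uu-\uh}$; a final application of Corollary~\ref{Cor::Energy_error} closes the argument.

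The routine parts are the triangle and Young splittings and the lifting bounds; the only genuinely delicate point is the flux estimate, where the nonlinear operator is composed with the Galerkin projection. The trick that makes it work is to peel off $\galproj(\diff(\uu,\nabla\uu))$ so that the residual is handled by the $\Ltwo$-stability of $\galproj$ together with the Lipschitz bound, while the projection error is absorbed into the consistency term that already appears on the right-hand side. I expect no additional regularity beyond the hypotheses of Corollary~\ref{Cor::Energy_error}, namely $\uu \in \Hm{\smth}(\dom,\triag)$ and $\diff(\uu,\nabla\uu) \in \Hm{\smth_*}(\dom,\triag)$.
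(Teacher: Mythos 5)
Your proposal is correct, and it is essentially the argument the paper itself invokes: the paper omits the proof, deferring to Theorem 5.5 of Bustinza--Gatica, whose proof proceeds exactly as you do --- the identity $\tet - \teth = \teth(\uu-\uh)$ (valid since $\jump{\uu}=0$ on all of $\sklt$ for the homogeneous Dirichlet solution, so $\liftR(\jump{\uu})=0$) absorbed into the energy norm via \eqref{Eq::liftcauchy}, and the splitting $\sig - \sigh = (I-\galproj)\diff(\uu,\nabla\uu) + \galproj\bigl(\diff(\uu,\nabla\uu)-\diff(\uh,\teth)\bigr)$ handled by the $\Ltwo$-contractivity of $\galproj$, global Lipschitz continuity \eqref{Eq::Lip}, the Poincar\'e-type bound \eqref{Eq::energynrm_eq}, and Corollary~\ref{Cor::Energy_error}. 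All intermediate steps (the reduction to $\ennorm{\uu-\uh}$, the projection-error term matching the $\hk^{2\mu_*}$ sum, and the regularity hypotheses) check out against the paper's framework.
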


The proof follows the same lines as \cite{bustinza2004local} and we skip it here.
\subsection{$L_2$ norm error estimate}
\label{Subsec::Ltwo}
In this section, we  present the error estimate of the solution $\uh$ in the $\Ltwo$ norm. Recalling the adjoint problem \eqref{Eq::Adjoint}, and by inserting $\ww = \uu - \uh$ as the (infinite dimensional) test function in \eqref{Eq::Frechet_brtwo_sipg} and \eqref{Eq::target}, to obtain
\begin{align}
\label{Eq::ltwo_err_tmp}
	\ltwonrm{\uh-\uu}{\dom}^2 &=  \difffrm'[ \uu] (\uh-\uu,\psi) - \Big(B'[\uu](\uh-\uu, \psi) - J'[\uu](\uh-\uu) \Big) \nonumber \\
			& = \difffrm(\uh,\psi) - \difffrm(\uu,\psi) - \mathcal{N}(\uu, \uh, \psi) \pm \big( \difffrm(\uu, \psi_h) - \difffrm(\uh,\psi_h) \big) + \dualcnst(\uu, \er, \psi) \nonumber  \\
			\ifdetail
			& =\difffrm(\uh,\psi - \psi_h) - \difffrm(\uu,\psi - \psi_h)  - \big( \difffrm(\uu, \psi_h) - \difffrm(\uh,\psi_h) \big)  + \dualcnst(\uu, \uh, \psi) - \mathcal{N}(\uu, \er, \psi) \nonumber  \\
\fi
			& = \difffrm(\uh,\psi - \psi_h) - \difffrm(\uu,\psi - \psi_h) - \prmcnst(\uu,  \psi_h)  + \dualcnst(\uu, \er, \psi) - \mathcal{N}(\uu, \uh, \psi) 
\end{align}
where $\prmcnst$ and $\dualcnst$ are defined in~\eqref{eq:consErrPrimal} and~\eqref{eq:consErrDual} as the consistency errors of the primal and the dual problem, respectively, while $\mathcal{N}(\uu, \uh, \psi)$ is the second order linearization error. {Here, $\psi_h = \projh \psi \in \Vsp$}. By the Taylor's formula in section~\ref{Secsec::approx} we have (note that $\teth(\er) = \gradh \er +  \liftR(\jump{\er})$ as already defined)
\begin{align}
\label{Eq::ltwo_taylor}
\mathcal{N}(\uu, \uh, \psi) &= \sumk \intk R_a(\uu - \uh, \nabla \uu - \nabla \uh) \cdot \nabla \psi \dx \\
& = \intdom \inprods{\Big[ \tilde{\diff}_{\uu\uu}(\uu, \nabla \uu)\er^2 + \teth(\er)\tr \tilde{\diff}_{\bm{z} \bm{z}}(\uu, \nabla \uu) \teth(\er)	 + 2\inprods{\tilde{\diff}_{\uu \bm{z}}(\uu, \nabla \uu)}{\teth(\er)} \er  \Big]}{\gradh \psi} \dx. \nonumber 
\end{align}
\ifdetail
\mzdet{
Note that in this special case, since $\jump{\psi} = 0$ all the additional terms to BR1 vanish and we can simply do the analyze for the BR1 
\begin{equation}
\mathcal{N}(\uu, \uh, \psi) = \difffrm(\uh,\psi) - \difffrm(\uu,\psi) - \difffrm'[ \uu] (\uh-\uu,\psi)  
\end{equation}
\begin{align*}
\difffrm(\uh,\psi) - \difffrm(\uu,\psi) &=  \intdom \inprods{\Big[ \diffu(\uu, \nabla \uu)\er + \diffz(\uu, \nabla \uu)(\nabla \er + \liftR(\jump{\er}))  \Big]}{\gradh \psi} \dx \\
& \quad + \intdom \inprods{\Big[ \tilde{\diff}_{\uu, \uu}(\uu, \nabla \uu)\er^2 + \tilde{\diff}_{\bm{z} \bm{z}}(\uu, \nabla \uu)(\nabla \er + \liftR(\jump{\er})^2 + 2\tilde{\diff}_{\uu \bm{z}}(\uu, \nabla \uu)\er (\nabla \er + \liftR(\jump{\er}))  \Big]}{\gradh \psi} \dx 
\end{align*}
so
\begin{equation*}
\mathcal{N}(\uu, \uh, \psi) =\intdom \inprods{\Big[ \tilde{\diff}_{\uu, \uu}(\uu, \nabla \uu)\er^2 + \tilde{\diff}_{\bm{z} \bm{z}}(\uu, \nabla \uu)(\nabla \er + \liftR(\jump{\er})^2 + 2\tilde{\diff}_{\uu \bm{z}}(\uu, \nabla \uu)\er (\nabla \er + \liftR(\jump{\er}))  \Big]}{\gradh \psi} \dx 
\end{equation*}
}
\mrg{ \mzdet{in \cite{gudi2008hp} $\psi_h$ appears instead of $\psi$, I don't know the reason of difference but similar to the argument for (4.73) in \cite{gudi2008hp} we can handle that case.}}
\fi
First let us handle the error estimate of the last three terms  on the right hand side of \eqref{Eq::ltwo_err_tmp}, $\mathcal{N}(\uu, \uh, \psi)$, $\prmcnst(\uu, \psi_h)$ and $\dualcnst(\uu, \uh, \psi)$, in the following steps:
\begin{enumerate}[label= ({\roman*})]
\item Using \eqref{Eq::ltwo_taylor} and very similar arguments as \cite[Lemma 3.10]{gudi2008hp} we have
\begin{align*}
\vert \mathcal{N}(\uu, \uh, \psi) \vert & \leq C \Big[ \Vert \er \Vert_{\Lp{4}(\dom)} + \Vert \teth(\er)	 \Vert_{\Lp{4}(\dom)} \Big] \Big[ \Vert \er \Vert_{\Lp{2}(\dom)} + \Vert \teth(\er) \Vert_{\Lp{2}(\dom)} \Big] \Vert \psi \Vert_{\Wmp{4}{1}(\dom, \triag)} \\
&\leq C \Big[ \Vert \er \Vert_{\Wmp{4}{1}(\dom, \triag)} + \Vert \liftR(\jump{\er}) \Vert_{\Lp{4}(\dom)} \Big] \Big[ \Vert \er \Vert_{\Wmp{2}{1}(\dom, \triag)} + \Vert  \liftR(\jump{\er}) \Vert_{\Lp{2}(\dom)} \Big] \Vert \psi \Vert_{\Wmp{4}{1}(\dom, \triag)}.
\end{align*}
From the embedding theorem, the last term is bounded since we know $\Hm{2}(\dom) \subset  \Wmp{4}{1}(\telem)$, i.e., $\Vert \psi \Vert_{\Wmp{4}{1}(\dom, \triag)} \leq C \Vert \psi \Vert_{\Hm{2}(\dom, \triag)}$ by a uniform constant $C$.
\ifdetail\mzdet{
Look at the Dahmen's digram for Sobolev's embeddings. (for $d \leq 4$) I don't remember where I saw this result but it seems correct, we have $\Lp{q} \subset \Wmp{p}{1}$ for $q \leq q^*$ where $\frac{1}{q^*} = \frac{1}{n} - \frac{1}{p}$. Then for $\uu \in \Lp{q}$ we have
\begin{equation}
\Vert \uu \Vert_{\Wmp{q}{1}}^2 = \Vert \uu \Vert_{\Lp{q}}^2 + \Vert \nabla \uu \Vert_{\Lp{q}}^2   \leq C_1 \Vert \uu \Vert_{\Wmp{p}{1}}^2 + C_2 \Vert \nabla \uu \Vert_{\Wmp{p}{1}}^2 \leq  C_1 \Vert \uu \Vert_{\Wmp{p}{2}}^2 + C_2 \Vert \uu \Vert_{\Wmp{p}{2}}^2  
\end{equation} 
In our case $n=2, p = 2$ so $q^* = \infty$ (this does not mean the $\Linf$ boundedness).
}

\mrg{\mzdet{I don't think we need $\Wmp{2}{1}(\dom)$ and $\Wmp{2}{1}(\dom, \triag)$ is sufficient.}}
\fi
Using \eqref{Eq::energynrm_eq_H1} gives, since $\uu - \uh \in \Vh$
\begin{equation}
 \Vert \er \Vert_{\Wmp{2}{1}(\dom, \triag)} + \Vert  \liftR(\jump{\er}) \Vert_{\Lp{2}(\dom)}  \leq C \ennorm{\uu - \uh}.
\end{equation}
Now, the only term to handle (to obtain an additional order of $\h$) is $\Vert \uu - \uh \Vert_{\Wmp{4}{1}(\dom, \triag)}$ and $\Vert  \liftR(\jump{\er}) \Vert_{\Lp{4}(\dom)}$. By writing $\uu - \uh = \uu - \projh \uu + \projh \uu - \uh = \eta + \xi$,  one gets
\begin{align*}
\Vert \uu - \uh \Vert_{\Wmp{4}{1}(\dom, \triag)} + \Vert  \liftR(\jump{\er}) \Vert_{\Lp{4}(\dom)}&\leq \Vert \prer \Vert_{\Wmp{4}{1}(\dom, \triag)}  + \Vert \dser \Vert_{\Wmp{4}{1}(\dom, \triag)} + \Vert  \liftR(\jump{\er}) \Vert_{\Lp{4}(\dom)} .
\end{align*}
In case of $\poldeg \geq 2$ and $\uu \in \Hm{{5/2}}(\dom)$ (note that $ \poldeg + 1 > 5/2$),  application of Lemma~\ref{Lem::hp_approx} gives
\begin{align*}
\Vert \prer \Vert_{\Wmp{4}{1}(\dom, \triag)} &\leq  C \Big( \sumk \hk^4 \Vert \uu \Vert^4_{\Hm{{5/2}}(\telem)} \Big)^{1/4} \leq  C \h \Big( \sumk \Vert \uu \Vert^2_{\Hm{{5/2}}(\telem)} \Big)^{1/2} \leq C \h \Vert \uu \Vert_{\Hm{{5/2}}(\dom)}.
\end{align*}
\ifdetail
\mzdet{
\begin{align}
 \Big( \sumk \hk^4 \Vert \uu \Vert^4_{\Hm{{5/2}}(\telem)} \Big)^{1/4} = \Big( \sumk (\hk^2 \Vert \uu \Vert^2_{\Hm{{5/2}}(\telem)})^2 \Big)^{1/4} \leq \Big(\Big( \sumk \hk^2 \Vert \uu \Vert^2_{\Hm{{5/2}}(\telem)} \Big)^2\Big)^{1/4}
\end{align}
}
\fi
On the other hand using the inverse inequality \eqref{Eq::Inverse} one has
\begin{align*}
\Vert \dser \Vert_{\Wmp{4}{1}(\dom, \triag)} &\leq \Big(\sumk \Cinv^2 \hk^{-1}   \Vert \dser \Vert^2_{\Wmp{2}{1}(\telem)} \Big)^{1/2}.
\end{align*}
\ifdetail
\mzdet{
\begin{align*}
\Vert \dser \Vert_{\Wmp{4}{1}(\dom, \triag)} &= \Big(\sumk \Vert \dser \Vert^4_{\Wmp{4}{1}(\telem)} \Big)^{1/4}  \nonumber \\ 
&\leq \Big(\sumk \big( \Cinv \hk^{-1/2}   \Vert \dser \Vert_{\Wmp{2}{1}(\telem)}\big)^4 \big)^{1/4} =  \Big(\sumk \big( \Cinv^2 \hk^{-1}   \Vert \dser \Vert^2_{\Wmp{2}{1}(\telem)}\big)^2 \big)^{1/4}\nonumber \\ 
& \leq \Big(\big(\sumk  \Cinv^2 \hk^{-1}   \Vert \dser \Vert^2_{\Wmp{2}{1}(\telem)}\big)^2 \big)^{1/4} \nonumber \\  
& = \Big(\sumk \Cinv^2 \hk^{-1}   \Vert \dser \Vert^2_{\Wmp{2}{1}(\telem)} \big)^{1/2}.
\end{align*}
}
\fi
Then, application to \eqref{Eq::Energy_dserr} for term $\Vert \dser \Vert_{\Wmp{2}{1}(\telem)}$  with $\mu = \min \{ \smth, \poldeg+1 \}$ and $\mu_* = \min \{ \smth_*, \poldeg+1 \}$, and quasi-uniformity condition \eqref{Eq::quasi_unif} yield
\begin{align}
\label{Eq::tmp_dser}
\Vert \dser \Vert_{\Wmp{4}{1}(\dom, \triag)} &\leq C  \Big( \sumk \hk^{2(\mu- 1) - 1}  \Vert \uu \Vert^2_{\Hm{\smth}(\telem)}  +\sumk \hk^{2\mu_* - 1}  \Vert \diff( \uu, \nabla \uu) \Vert^2_{\Hm{{\smth_*}}(\telem)}\Big)^{1/2} \nonumber \\
& \leq C \h^{\mu - 3/2} \Vert \uu \Vert_{\Hm{\smth}(\dom, \triag)} + C \h^{\mu_* - 1/2}  \Vert \diff( \uu, \nabla \uu) \Vert_{\Hm{{\smth_*}}(\dom, \triag)}.
\end{align} 
Hence, $\Vert \dser \Vert_{\Wmp{4}{1}(\dom, \triag)} \leq C \h$ for $\uu \in \Hm{{5/2}}(\dom, \triag)$ and $\poldeg \geq 2$, provided that $ \diff( \uu, \nabla \uu) \in \Hm{{3/2}}(\dom, \triag)$. {Similarly, since $ \liftR(\jump{\er}) \in \Sigsp$ one can employ \eqref{Eq::Inverse} to write
\begin{align*}
\Vert \liftR(\jump{\er}) \Vert_{\Lp{4}(\dom)} &\leq \Big(\sumk \Cinv^2 \hk^{-1}   \Vert \liftR(\jump{\er}) \Vert^2_{\Lp{2}(\telem)} \Big)^{1/2}.
\end{align*} 
Using  \eqref{Eq::liftcauchy}, \eqref{Eq::quasi_unif}, Corollary~\ref{Cor::Energy_error}, and with similar arguments as \eqref{Eq::tmp_dser} yield
\begin{align}
 \Vert \liftR(\jump{\er}) \Vert_{\Lp{4}(\dom)} \leq C \h^{\mu - 3/2} \Vert \uu \Vert_{\Hm{\smth}(\dom, \triag)} + C \h^{\mu_* - 1/2}  \Vert \diff( \uu, \nabla \uu) \Vert_{\Hm{{\smth_*}}(\dom, \triag)} \leq C \h.
\end{align}
}
Combining all terms we have
\begin{equation}
\vert \mathcal{N}(\uu, \uh, \psi) \vert \leq C \h \ennorm{\uu - \uh} \Vert \psi \Vert_{\Hm{2}(\dom)} 
\end{equation}

\item {Using Lemma~\ref{Lem::Cnsst}	 and noticing that  $ \prmcnst(\uu, \psi) = 0$ (see \eqref{eq:consErrPrimal} for smooth $\psi$ as well as the boundary condition of \eqref{Eq::Adjoint}), one has the following upper bound for $\prmcnst(\uu,  \psi_h)$
\begin{equation}
\vert \prmcnst(\uu,  \psi_h)  \vert = \vert \prmcnst(\uu,  \psi - \psi_h)  \vert   \leq  C_{con} \Big( \sumk \h_\telem^{2\mu_*}  \Vert \diff( \uu, \nabla \uu) \Vert^2_{\Hm{{\smth_*}}(\telem)} \Big)^{1/2} \ennorm{ \psi - \psi_h}.
\end{equation}
The approximation result of Lemma~\ref{Lem::hp_approx} and $\Hm{2}$-regularity of $\psi$ give
\begin{equation}
\vert \prmcnst(\uu,  \psi_h)  \vert  \leq  C \h \Big( \sumk \h_\telem^{2\mu_*}  \Vert \diff( \uu, \nabla \uu) \Vert^2_{\Hm{{\smth_*}}(\telem)} \Big)^{1/2} \Vert \psi \Vert_{\Hm{2}(\dom)}.
\end{equation}
}

\item 
For the adjoint consistency error $\dualcnst(\uu, \er, \psi)$, from Lemma~\ref{Lemm::Adjoint_Consst} one has
\begin{align*}
\vert \dualcnst(\uu, \er, \psi)  \vert &  \leq \Cacon \h \Vert \diff_z( \uu, \nabla \uu) \Vert_{\Wmp{\infty}{1}(\dom)}  \Vert \psi \Vert_{\Hm{2}(\dom)} \ennorm{\uu - \uh},
\end{align*}
\end{enumerate}
Combining steps (i)-(iii), with the energy error estimate in Corollary~\ref{Cor::Energy_error}, one can write
\begin{align*}
\vert \mathcal{N}(\uu, \uh, \psi) \vert + \vert \prmcnst(\uu,  \psi_h) \vert  +  \vert \dualcnst(\uu, \er, \psi)  \vert \leq C \h \Vert \psi \Vert_{\Hm{2}(\dom)} \Big(\! \! \sumk \hk^{2(\mu - 1)}  \Vert \uu \Vert^2_{\Hm{\smth}(\telem)} + \hk^{2\mu_*}  \Vert \diff( \uu, \nabla \uu) \Vert^2_{\Hm{{\smth_{*}}}(\telem)}  \Big)^{1/2}
\end{align*}

The remaining term in \eqref{Eq::ltwo_err_tmp} can be bounded using Lipschitz continuity \eqref{Eq::Lip} and Lemma~\ref{Lem::Approx}
\begin{equation}
\difffrm(\uh,\psi - \psi_h) - \difffrm(\uu,\psi - \psi_h) \leq \CLC \ennorm{\uh-\uu} \ennorm{\psi-\psi_h} \leq \CLC \CA' \h \ennorm{\uh-\uu} \Vert \psi \Vert_{\Hm{2}(\dom)}.
\end{equation}
Application to the elliptic regularity of $\psi$ and Corollary~\ref{Cor::Energy_error} gives
\begin{align*}
	\ltwonrm{\uh-\uu}{\dom} &  \leq C \h \Big( \sumk \hk^{2(\mu - 1)}  \Vert \uu \Vert^2_{\Hm{\smth}(\telem)} + \hk^{2\mu_*}  \Vert \diff( \uu, \nabla \uu) \Vert^2_{\Hm{{{\smth_{*}}	}}(\telem)}  \Big)^{1/2}
\end{align*} 
which can be summarized as the following lemma for the optimal $\Ltwo$ error estimate
\begin{lemma}
Assume  that $\uh$ and $\uu$ are the solutions of \eqref{Eq::primal} and the exact solution of \eqref{Eq::origPDE}, respectively. Also assume that $\uu \in \Hm{\smth}(\dom, \triag) $ and $\diff( \uu, \nabla \uu) \in \Hm{{\smth_*}}(\dom, \triag)$ with $\smth \geq \frac{5}{2}$ and $\smth_* \geq \frac{3}{2}$. Then, there exists $\Cerr' > 0$ such that the following holds
\begin{equation}
\Vert \uu - \uh \Vert_{\Ltwo(\dom)} \leq \Cerr' \Big( \sumk \hk^{2\mu}  \Vert \uu \Vert^2_{\Hm{\smth}(\telem)}  + \sumk \hk^{2(\mu_*+1)}  \Vert \diff( \uu, \nabla \uu) \Vert^2_{\Hm{{\smth_*}}(\telem)}  \Big)^{1/2},  
\end{equation}
where  $\mu = \min(\smth, \poldeg+1)$ and $\mu_* = \min(\smth_*, \poldeg+1)$ and $\poldeg 
\geq 2$. 
\end{lemma}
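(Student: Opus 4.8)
The plan is to carry out the Aubin--Nitsche duality argument that is already set in motion above and then simply assemble the term-by-term bounds. First I would recall the error representation~\eqref{Eq::ltwo_err_tmp}, which follows by inserting the infinite-dimensional test function $\ww = \uu - \uh$ into the Fr\'echet derivative~\eqref{Eq::Frechet_brtwo_sipg} and the target functional~\eqref{Eq::target}, and invoking the adjoint problem~\eqref{Eq::Adjoint}. This expresses $\ltwonrm{\uh - \uu}{\dom}^2$ as a sum of four contributions: the Galerkin-type difference $\difffrm(\uh, \psi - \psi_h) - \difffrm(\uu, \psi - \psi_h)$, the primal consistency error $\prmcnst(\uu, \psi_h)$, the adjoint consistency error $\dualcnst(\uu, \er, \psi)$, and the second-order linearization error $\mathcal{N}(\uu, \uh, \psi)$, where $\psi_h = \projh \psi$ and $\psi$ is the $\Hm{2}$-regular solution of~\eqref{Eq::Adjoint}.

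Next I would bound the three easier contributions. The Galerkin difference is controlled by the Lipschitz continuity of $\difffrm$ (Lemma~\ref{Lem::Glob_Lip}) together with the approximation property of $\projh$ (Lemma~\ref{Lem::Approx}) and the regularity~\eqref{Eq::regularity} of $\psi$, giving a factor $\h\, \ennorm{\uh - \uu}\, \Vert \psi \Vert_{\Hm{2}(\dom)}$. The primal consistency term uses Lemma~\ref{Lem::Cnsst} and the key fact that $\prmcnst(\uu, \psi) = 0$ for the smooth $\psi$ vanishing on $\partial\dom$ (see~\eqref{eq:consErrPrimal}), so that $\prmcnst(\uu, \psi_h) = \prmcnst(\uu, \psi - \psi_h)$ and the approximation estimate of Lemma~\ref{Lem::hp_approx} again supplies one power of $\h$. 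The adjoint consistency term is handled directly by Lemma~\ref{Lemm::Adjoint_Consst}, which already furnishes the factor $\h\, \Vert \psi \Vert_{\Hm{2}(\dom)}\, \ennorm{\uu - \uh}$.

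The principal obstacle, and the step demanding the extra hypotheses, is the linearization error $\mathcal{N}(\uu, \uh, \psi)$ given by the Taylor remainder~\eqref{Eq::ltwo_taylor}. Using the bounded second derivatives of $\diff$ (assumption~\ref{Ass::3}), the estimate reduces to controlling $\big[\Vert \er \Vert_{\Lp{4}(\dom)} + \Vert \teth(\er) \Vert_{\Lp{4}(\dom)}\big]\big[\Vert \er \Vert_{\Lp{2}(\dom)} + \Vert \teth(\er) \Vert_{\Lp{2}(\dom)}\big]\Vert \psi \Vert_{\Wmp{4}{1}(\dom, \triag)}$. The second bracket is bounded by $\ennorm{\uu - \uh}$ through~\eqref{Eq::energynrm_eq_H1}, and the $\Wmp{4}{1}$-factor by $\Vert \psi \Vert_{\Hm{2}(\dom)}$ via the embedding $\Hm{2}(\dom) \subset \Wmp{4}{1}(\telem)$. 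The delicate point is the first bracket: splitting $\er = \prer + \dser$ with $\prer = \uu - \projh\uu$ and $\dser = \projh \uu - \uh$, the projection part is bounded by Lemma~\ref{Lem::hp_approx}, but for the discrete part $\dser$ and for the lifting $\liftR(\jump{\er}) \in \Sigsp$ I must pass from the $\Lp{4}$ to the $\Lp{2}$ norm via the inverse inequality~\eqref{Eq::Inverse} and quasi-uniformity~\eqref{Eq::quasi_unif}, losing a factor $\hk^{-1/2}$. This loss is absorbed precisely by the extra half power of $\h$ carried by the energy estimates~\eqref{Eq::Energy_dserr} and Corollary~\ref{Cor::Energy_error}, which is exactly why $\poldeg \geq 2$, $\smth \geq 5/2$ and $\smth_* \geq 3/2$ are required; the net effect is $\vert \mathcal{N}(\uu, \uh, \psi) \vert \leq C\h\, \ennorm{\uu - \uh}\, \Vert \psi \Vert_{\Hm{2}(\dom)}$.

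Finally I would combine the four bounds, apply the elliptic regularity~\eqref{Eq::regularity} in the form $\Vert \psi \Vert_{\Hm{2}(\dom)} \leq C\,\ltwonrm{\uu - \uh}{\dom}$ to cancel one factor of the $\Ltwo$-norm against the left-hand side, and insert the energy-norm estimate of Corollary~\ref{Cor::Energy_error} for $\ennorm{\uu - \uh}$. Since each contribution carries one clean power of $\h$ beyond the energy estimate, this produces the stated optimal $\Ltwo$ rate, with $\mu = \min(\smth, \poldeg+1)$ and $\mu_* = \min(\smth_*, \poldeg+1)$.
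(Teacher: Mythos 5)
Your proposal is correct and takes essentially the same route as the paper's own proof: the identical four-term duality decomposition \eqref{Eq::ltwo_err_tmp}, the same lemmas for the Galerkin difference, primal consistency and adjoint consistency terms, and the same treatment of the linearization error $\mathcal{N}(\uu,\uh,\psi)$ via the $\Lp{4}$--$\Lp{2}$ splitting, the embedding $\Hm{2}(\dom)\subset \Wmp{4}{1}(\telem)$, and the inverse inequality \eqref{Eq::Inverse} with quasi-uniformity \eqref{Eq::quasi_unif}, whose $\h^{-1/2}$ loss is absorbed by the half-power surplus in \eqref{Eq::Energy_dserr} and Corollary~\ref{Cor::Energy_error} --- precisely the mechanism behind the hypotheses $\poldeg \geq 2$, $\smth \geq 5/2$, $\smth_* \geq 3/2$. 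Your concluding step, cancelling one $\Ltwo$ factor through the elliptic regularity \eqref{Eq::regularity} and inserting the energy estimate, also coincides with the paper's argument.
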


\section{Conclusion}
In this work, we have analyzed different DG	 formulations of a quasilinear elliptic problem by introducing appropriate numerical flux functions inspired by their original version in linear problems. We showed that in spite of the fact that all of these formulations are inconsistent, they have the asymptotic consistency property for both primal and dual problem. Moreover, we also proved  the stability of the solution in $\Ltwo$ norm  under mild assumptions on the problem.

Furthermore, for BR2 and SIPG discretizations, we proved the existence and uniqueness of the discrete solution in case of monotone and globally Lipschitz diffusion operator. Afterwards, under regularity assumptions for the exact solution, we proved the optimal convergence rate in energy norm as well as $\Ltwo$ norm.

\bibliographystyle{siam}
\bibliography{mybib}

\end{document}